\definecolor{webgreen}{rgb}{0,.5,0}
\definecolor{webbrown}{rgb}{.6,0,0}
\newcommand{\seqnum}[1]{\href{https://oeis.org/#1}{\rm \underline{#1}}}
\begin{document}

\theoremstyle{plain}
\newtheorem{theorem}{Theorem}
\newtheorem{corollary}[theorem]{Corollary}
\newtheorem{lemma}[theorem]{Lemma}
\newtheorem{proposition}[theorem]{Proposition}

\theoremstyle{definition}
\newtheorem{definition}{Definition}
\newtheorem{example}{Example}
\newtheorem{conjecture}[theorem]{Conjecture}

\theoremstyle{remark}
\newtheorem{remark}[theorem]{Remark}

\begin{center}
\epsfxsize=4in
\end{center}

\begin{center}
\vskip 1cm{\Large\bf Utilizing Symmetry in Finding New Permutiples from Known Examples\\
\vskip .1in
}
\vskip 1cm
\large
Benjamin V. Holt\\
Department of Mathematics\\
Southwestern Oregon Community College \\
Coos Bay, Oregon 97420\\
USA\\
\href{mailto: benjamin.holt@socc.edu}{\tt benjamin.holt@socc.edu} \\
\end{center}

\vskip .2 in

\begin{abstract}
A permutiple is a natural number whose representation in some base, $b>1$, is an integer multiple of a number whose base-$b$ representation has the same collection of digits. Previous efforts have made progress in finding such numbers using graph-theoretical and finite-state-machine constructions. These are the mother graph and the Hoey-Sloane machine. In this paper, we leverage the inherent symmetry of the above constructions for the purpose of finding new permutiples from old. Such results also help us to see previous work through a new lens.
\end{abstract}

\section{Introduction}
A {\it permutiple} is a number which is an integer multiple of some permutation of its digits in a natural-number base, $b>1$ \cite{holt_3}. We may also describe permutiples as the result of a digit-preserving multiplication. The reader may find other descriptions of equivalent and similar notions in the OEIS \cite{sloane_2}, for instance, ``numbers whose digits can be permuted to get a proper divisor'' (\seqnum{A096093}), or ``numbers $k$ such that $k$ and $nk$ are anagrams'' (\seqnum{A023086}, \seqnum{A023087}, \seqnum{A023088}, \seqnum{A023089}, \seqnum{A023090}, \seqnum{A023091}, \seqnum{A023092}, \seqnum{A023093}). Specific, well-studied cases of permutiple numbers include {\it cyclic numbers}, {\it transposable numbers} \cite{guttman,kalman}, and {\it parasitic numbers} \cite[\seqnum{A092697}]{sloane_2}, all of which involve cyclic permutations of their digits. A base-$10$ example of such a number is $714285 = 5 \cdot 142857$. {\it Palintiple} numbers \cite{hoey, holt_1,holt_2} (\seqnum{A031877}, \seqnum{A222814}, \seqnum{A222815}), another well-studied case, also known as {\it reversal numbers} \cite[\seqnum{A031877}]{sloane_2}, are multiples of their digit reversals. Similarly-defined cases of this phenomenon include {\it reverse multiples} \cite{kendrick_1,sloane, young_1,young_2} (\seqnum{A001232}, \seqnum{A008918}, \seqnum{A008919}), which are also known as {\it reverse divisors} \cite{web_wil}. For the present effort, we retain the use of the term ``palintiple'' to be consistent with the terminology used by Hoey \cite{hoey} and other work  \cite{holt_1,holt_2,holt_3,holt_4,holt_5,holt_6}. The large variety of palintiple (reverse-multiple) types can be organized using a graph-theoretical construction by Sloane \cite{sloane} called {\it Young graphs}, which are a modification of the work of Young \cite{young_1,young_2}. The most widely known examples of palintiples, also in base 10, include $87912=4\cdot 21978$ (\seqnum{A222815}) and $98901 = 9 \cdot 10989$ (\seqnum{A222814}).

Efforts beyond cyclic and reversal permutations include a paper by Qu and Curran \cite{qu} who examined the case of permutiples which are multiples of $(b^{b-1}-1)/(b-1)^2$. In base $10$, these are multiples of $123456789$ (a finite subsequence of \seqnum{A053654}), and two base-$10$ examples include $493827156=4 \cdot 123456789$ and $987654312=8 \cdot 123456789$. Other work \cite{holt_3,holt_4} detailed elementary methods for finding new permutiple examples from the digits of known examples, such as $87912=4\cdot 21978$, to obtain new examples, such as $79128=4\cdot 19782$ and $78912=4\cdot 19728$, which are all multiples of elements in \seqnum{A023088}. This work \cite{holt_3} also found all $5$-digit, base-$6$ permutiples with multiplier $2$ having the same digits as the example $43512=2 \cdot 21534$, each of which is a multiple of an element of \seqnum{A023064}.

Hoey \cite{hoey} used methods from formal language theory, namely, a finite-state-machine construction, to find and characterize all base-$10$ palintiples. In this same work, Hoey also constructed machines which recognize palintiples in other bases, while leaving their general properties as open questions. The state diagrams of these machines bear strong resemblance to Sloane's \cite{sloane} construction, and Sloane acknowledges the connection between Young graphs and formal language theory. Faber and Grantham \cite{faber} also used finite-state-machine techniques to find pairs of integers whose sum is the reverse of their product in an arbitrary base (\seqnum{A360518}). A base-$10$ example of such a pair is $3$ and $24$ since $3+24=27$ and $3\cdot 24=72$. Recent work by the author \cite{holt_5, holt_6} used a finite-state-machine construction and its state graph, called the {\it Hoey-Sloane graph}, to find permutiples. This graph describes single-digit multiplication in a chosen base and multiplier, where each carry is less than the multiplier. The states of the machine are the possible carries of a digit-preserving multiplication, and the input alphabet consists of ordered pairs from the so-called {\it mother graph}, which describes how digits may be permuted in a single-digit multiplication. Digit-preserving multiplications are represented by input strings consisting of certain multiset combinations of mother-graph cycles which enable walks on the Hoey-Sloane graph beginning and ending with the zero state. Several base-$10$ examples with multiplier $4$ were provided (all multiples of elements in \seqnum{A023088}), as well as descriptions of all base-$4$ permutiples (\seqnum{A023059}, \seqnum{A023060}).

In this article, we draw particular attention to the reflective symmetry of the mother graph, and we detail how the Hoey-Sloane graph inherits this symmetry. From these investigations, we see how notions of symmetry arise when finding new permutiples from known examples, and how previous efforts \cite{holt_3,holt_4} fit into a broader framework.

In subsequent sections, we make extensive use of several results and definitions from previous work \cite{holt_3,holt_5}. We now provide a summary of these.

\subsection{Basic definitions and results}

We use the notation $(d_k,d_{k-1},\ldots,d_0)_b$ to denote the natural number $\sum_{j=0}^{k}d_j b^j$, where $0\leq d_j<b$ for all $0\leq j \leq k$. With this notation, we state the definition of a permutiple number \cite[Definition 1]{holt_3}.

\begin{definition}
Let $n$ and $b$ be natural numbers where $1<n<b$, and let $\sigma$ be a permutation on $\{0,1,2,\ldots, k\}$. We say that $(d_k, d_{k-1},\ldots, d_0)_b$  is an $(n,b,\sigma)$-\textit{permutiple} provided
\[
(d_k,d_{k-1},\ldots,d_1, d_0)_b=n\cdot(d_{\sigma(k)},d_{\sigma(k-1)},\ldots, d_{\sigma(1)}, d_{\sigma(0)})_b.
\]
In the case that the digit permutation, $\sigma$, is not relevant, we may refer to $(d_k,d_{k-1},\ldots,d_0)_b$ as simply an $(n,b)$-permutiple. We refer to the collection of base-$b$ permutiples having $n$ as their  multiplier as {\it $(n,b)$-permutiples}.
\end{definition}

An algorithm for carrying out single-digit multiplication gives us the next result, which is stated and proved in previous work \cite[Theorem 1]{holt_3}.

\begin{theorem}
Let $(d_k, d_{k-1},\ldots, d_0)_b$ be an $(n,b,\sigma)$-permutiple, and let $c_j$ be the $j^{th}$ carry. Then, $b c_{j+1}-c_j=nd_{\sigma(j)}-d_{j}$ for all $0\leq j \leq k$.
\label{digits_carries_1}
\end{theorem}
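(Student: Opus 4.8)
The plan is to realize the carries as the byproducts of the ordinary column-by-column multiplication algorithm and then to read off the stated recurrence directly from the way each column is resolved. Writing $e_j = d_{\sigma(j)}$ for brevity, the permutiple hypothesis reads $\sum_{j=0}^k d_j b^j = n \sum_{j=0}^k e_j b^j$. I would define the carries by the usual recursion $c_0 = 0$ and $c_{j+1} = \lfloor (n e_j + c_j)/b \rfloor$, and set $r_j = (n e_j + c_j) - b c_{j+1}$, so that by construction $n e_j + c_j = r_j + b c_{j+1}$ with $0 \le r_j < b$ and each $c_j \ge 0$.

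First I would verify by induction on $m$ that $\sum_{j=0}^m n e_j b^j = \sum_{j=0}^m r_j b^j + c_{m+1} b^{m+1}$. The base case $m = 0$ is exactly the defining equation $n e_0 = r_0 + b c_1$. For the inductive step, I would add $n e_{m+1} b^{m+1}$ to both sides and use $n e_{m+1} + c_{m+1} = r_{m+1} + b c_{m+2}$ to absorb the old carry term $c_{m+1} b^{m+1}$ into the new digit and new carry, so that the telescoping leaves $\sum_{j=0}^{m+1} r_j b^j + c_{m+2} b^{m+2}$.

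Setting $m = k$ then gives $n \sum_{j=0}^k e_j b^j = \sum_{j=0}^k r_j b^j + c_{k+1} b^{k+1}$. Because the permutiple $(d_k, \ldots, d_0)_b$ has exactly $k+1$ digits, its value is at most $b^{k+1} - 1 < b^{k+1}$, which forces the final overflow carry $c_{k+1}$ to vanish. Hence $\sum_{j=0}^k d_j b^j = \sum_{j=0}^k r_j b^j$ with every $r_j \in \{0, 1, \ldots, b-1\}$; by uniqueness of the base-$b$ representation, $r_j = d_j$ for each $j$. Substituting $r_j = d_j$ into the defining relation $n e_j + c_j = r_j + b c_{j+1}$ and rearranging yields $b c_{j+1} - c_j = n d_{\sigma(j)} - d_j$ for all $0 \le j \le k$, as claimed.

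The routine inductive bookkeeping is not where the difficulty lies; the one step deserving care is the justification that $c_{k+1} = 0$, since this is what pins the algorithmically generated digits $r_j$ to the actual digits $d_j$ rather than to a longer representation. I expect this to be the main (though modest) obstacle, and it is handled entirely by the digit-count bound $n \sum_j e_j b^j < b^{k+1}$ together with the uniqueness of base-$b$ expansions.
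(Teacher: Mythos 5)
Your proof is correct, and it takes the same route as the source: the paper does not reprove this result but cites \cite{holt_3}, where it is derived exactly as you do, by running the single-digit (column-by-column) multiplication algorithm, telescoping the carry recursion, observing that the terminal carry must vanish because the product has only $k+1$ digits, and matching the algorithm's output digits to $d_j$ by uniqueness of base-$b$ representation. No gaps; the one delicate point (that $c_{k+1}=0$) is handled correctly.
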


Every carry in a digit-preserving multiplication is less than the multiplier \cite[Theorem 2]{holt_3}. This fact is of particular importance in what follows, and we state the full result below.

\begin{theorem} \label{carries}
Let $(d_k, d_{k-1},\ldots, d_0)_b$ be an $(n,b,\sigma)$-permutiple, and let $c_j$ be the $j^{th}$ carry. Then, $c_j\leq n-1$ for all $0 \leq j \leq k$.
\end{theorem}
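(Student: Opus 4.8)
The plan is to argue by induction on $j$, leveraging the carry recurrence supplied by Theorem~\ref{digits_carries_1}. Rearranging that identity gives $b c_{j+1} = n d_{\sigma(j)} - d_j + c_j$, which expresses each carry in terms of its predecessor together with the digits involved. Since the carries arise from the single-digit multiplication algorithm, the initial carry satisfies $c_0 = 0$, which furnishes the base case because $0 \le n-1$ (recall that $n>1$). It is also worth noting at the outset that every $c_j$ is a nonnegative integer, a fact I will use in the final step.

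For the inductive step, I would assume $c_j \le n-1$ and bound the right-hand side of the rearranged recurrence. Using the crude estimates $d_j \ge 0$ and $d_{\sigma(j)} \le b-1$ together with the inductive hypothesis, I obtain $b c_{j+1} = n d_{\sigma(j)} - d_j + c_j \le n(b-1) + (n-1) = nb - 1$. Dividing by $b$ yields $c_{j+1} \le (nb-1)/b < n$. Since $c_{j+1}$ is an integer, this strict inequality forces $c_{j+1} \le n-1$, completing the induction and hence the proof.

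The argument is essentially routine; the only points requiring care are the justification that $c_0 = 0$, so that the induction has a valid starting point, and the final step in which the strict bound $c_{j+1} < n$ is upgraded to $c_{j+1} \le n-1$ via integrality. The main conceptual observation—rather than a genuine obstacle—is recognizing that the coarse digit bounds $d_j \ge 0$ and $d_{\sigma(j)} \le b-1$ already suffice, so that no finer information about the particular permutation $\sigma$ or the specific digits is needed.
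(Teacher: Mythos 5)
Your proof is correct: the induction from $c_0=0$ using the recurrence of Theorem~\ref{digits_carries_1}, the crude bounds $0\le d_j$ and $d_{\sigma(j)}\le b-1$, and the integrality of the carries to pass from $c_{j+1}<n$ to $c_{j+1}\le n-1$ is exactly the standard argument. The paper itself does not reprove this statement (it quotes it from earlier work), but your argument is the natural one and matches what that source does.
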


For an $(n,b,\sigma)$-permutiple, $(d_k,d_{k-1},\ldots, d_0)_b$,  there are techniques for finding permutations, $\pi$, which yield another permutiple, $(d_{\pi(k)},d_{\pi(k-1)},\ldots, d_{\pi(0)})_b$. These methods rely on the notion of permutiple {\it conjugacy} to sort new examples into {\it conjugacy classes} \cite[Definition 2]{holt_3,holt_4}.

\begin{definition}\label{conj_class_def}
Let $(d_k, d_{k-1},\ldots, d_0)_b$ be an $(n,b)$-permutiple. Then, an $(n,b, \tau_1)$-permutiple, $(d_{\pi_1(k)}, d_{\pi_1(k-1)},\ldots, d_{\pi_1(0)})_b$, and an $(n,b, \tau_2)$-permutiple, $(d_{\pi_2(k)}, d_{\pi_2(k-1)},\ldots, d_{\pi_2(0)})_b$, are said to be \textit{conjugate} if $\pi_1 \tau_1 \pi_1^{-1}=\pi_2 \tau_2 \pi_2^{-1}$.
\end{definition}
In the case of repeated digits, this definition requires that we assume the collection $\{d_k, d_{k-1},\ldots, d_0\}$ is a multiset.

As an example of the above ideas, we may apply the techniques featured in previous work \cite{holt_3,holt_4} to a known example, $p=(d_4,d_3,d_2,d_1,d_0)_{10}=(8,7,9,1,2)_{10}=4 \cdot (2,1,9,7,8)_{10}$, to compute elements, $(d_{\pi(4)},d_{\pi(3)},d_{\pi(2)},d_{\pi(1)},d_{\pi(0)})_{10}$, of the conjugacy class containing $p$,  all of which are shown in Table \ref{conj_class_table}. Note that $\psi$ is the 5-cycle $(0,1,2,3,4)$, $\rho$ is the reversal permutation, and $\varepsilon$ is the identity permutation.
\begin{center}
\begin{tabular}{|c|c|c|}
\hline $(4,10,\tau)$-Example & $\pi$ & $\tau$ \\\hline
$(8,7,9,1,2)_{10}=4 \cdot (2,1,9,7,8)_{10}$ & $\varepsilon$ & $\rho$  \\\hline
$(8,7,1,9,2)_{10}=4 \cdot (2,1,7,9,8)_{10}$ & $(1,2)$ & $(1,2)\rho(1,2)$ \\\hline
$(7,9,1,2,8)_{10}=4 \cdot (1,9,7,8,2)_{10}$ & $\psi^4$ & $\psi^{-4}\rho\psi^4$ \\\hline
$(7,1,9,2,8)_{10}=4 \cdot (1,7,9,8,2)_{10}$ & $(1,2)\psi^4$ & $\psi^{-4}(1,2)\rho(1,2)\psi^4$ \\\hline
\end{tabular}
\captionof{table}{The conjugacy class of $p=(8,7,9,1,2)_{10}=4 \cdot (2,1,9,7,8)_{10}$.}\label{conj_class_table}
\end{center}
All of the examples in Table \ref{conj_class_table} are conjugate since $\pi\tau\pi^{-1}=\rho$ for every $\pi$ and $\tau$ listed. An example with the same digits which is not a member of the above conjugacy class is $(7,8,9,1,2)_{10} = 4 \cdot (1,9,7,2,8)_{10}$, which may also be found using previous work \cite{holt_4}.

\subsection{Permutiple graphs and the mother graph}

We now state a definition of a permutiple's graph \cite[Definition 3]{holt_5}, which is a description of the digit permutation in terms of the digits themselves rather than their indexing set.

\begin{definition}
Let $p=(d_k, d_{k-1},\ldots, d_0)_b$  be an $(n,b,\sigma)$-permutiple. We define a directed graph, called the {\it graph of $p$}, denoted by $G_p$, to consist of the collection of base-$b$ digits as vertices, and the collection of directed edges $E_p= \{(d_j,d_{\sigma(j)}) \mid  0 \leq j \leq k \}$.  A graph, $G$, for which there is a permutiple, $p$, such that $G=G_p$ is called a {\it permutiple graph}.
\label{class_graph}
\end{definition}

We note that for the remainder of this effort, all graphs are directed graphs, and we may refer to a ``directed graph'' as simply a ``graph,'' or a ``directed edge'' as an ``edge.''

A permutiple's conjugacy class gives us additional information about its graph; when two $(n,b)$-permutiples, $p_1$ and $p_2$, are conjugate, their graphs are the same \cite[Theorem 4]{holt_5}.

\begin{theorem}\label{conj_class}
Suppose $(d_k, d_{k-1},\ldots, d_0)_b$ is an $(n,b,\sigma)$-permutiple. Also, suppose $p_1=(d_{\pi_1(k)}, d_{\pi_1(k-1)},\ldots, d_{\pi_1(0)})_b$ is an $(n,b, \tau_1)$-permutiple, and $p_2=(d_{\pi_2(k)}, d_{\pi_2(k-1)},\ldots, d_{\pi_2(0)})_b$ is an $(n,b, \tau_2)$-permutiple. Then, if $p_1$ and $p_2$ are members of the same conjugacy class, then $G_{p_1}=G_{p_2}$.
\end{theorem}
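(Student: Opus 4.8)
The plan is to reduce the claim $G_{p_1}=G_{p_2}$ to an equality of edge collections, since by Definition~\ref{class_graph} both graphs carry the same vertex set, namely the full collection of base-$b$ digits. Thus it suffices to show $E_{p_1}=E_{p_2}$.

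First I would write each edge collection explicitly in terms of the original digits $d_0,\ldots,d_k$ rather than the permuted digits. Since $p_1=(d_{\pi_1(k)},\ldots,d_{\pi_1(0)})_b$ is an $(n,b,\tau_1)$-permutiple, its $j$th digit is $d_{\pi_1(j)}$, so Definition~\ref{class_graph} gives $E_{p_1}=\{(d_{\pi_1(j)},\,d_{\pi_1(\tau_1(j))}) \mid 0\le j\le k\}$. The key step is a change of index: because $\pi_1$ is a bijection of $\{0,1,\ldots,k\}$, I set $i=\pi_1(j)$ so that $j=\pi_1^{-1}(i)$ ranges over the same index set, and the target index becomes $\pi_1(\tau_1(\pi_1^{-1}(i)))=(\pi_1\tau_1\pi_1^{-1})(i)$. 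This rewrites the collection as
\[
E_{p_1}=\{(d_i,\,d_{(\pi_1\tau_1\pi_1^{-1})(i)}) \mid 0\le i\le k\}.
\]
An identical computation yields $E_{p_2}=\{(d_i,\,d_{(\pi_2\tau_2\pi_2^{-1})(i)}) \mid 0\le i\le k\}$.

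With both edge collections expressed against the common indexing $d_0,\ldots,d_k$, I would invoke the conjugacy hypothesis. By Definition~\ref{conj_class_def}, $p_1$ and $p_2$ lying in the same conjugacy class means precisely $\pi_1\tau_1\pi_1^{-1}=\pi_2\tau_2\pi_2^{-1}$. Substituting this equality into the two displayed descriptions gives $E_{p_1}=E_{p_2}$ term by term, and hence $G_{p_1}=G_{p_2}$.

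The one point requiring care is the repeated-digit situation flagged after Definition~\ref{conj_class_def}: when the $d_j$ are not distinct, the edge collections must be read as multisets, so that distinct indices contributing identical ordered pairs are not inadvertently collapsed. I expect this to be the main (though mild) obstacle, since it is what makes the term-by-term matching legitimate. Fortunately the change of index $i=\pi_1(j)$ is a genuine bijection of the index set, so it respects multiplicities, and the multiset equality follows directly from the equality of the conjugated permutations rather than from any coincidence among the digit values.
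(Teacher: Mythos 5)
Your proof is correct and follows essentially the same route the paper relies on: Theorem~\ref{conj_class} is imported from earlier work, but the identical reindexing computation (rewriting $E_{p_1}$ as $\{(d_i,d_{\pi_1\tau_1\pi_1^{-1}(i)})\mid 0\le i\le k\}$ via the substitution $i=\pi_1(j)$) appears verbatim, run in the opposite direction, in the paper's own proof that $G_{p_1}=G_{p_2}$ if and only if $p_1$ and $p_2$ are coarsely conjugate. Your attention to the multiset reading of the edge collection in the repeated-digit case is appropriate and consistent with the caveat the paper attaches to Definition~\ref{conj_class_def}.
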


Each conjugate permutiple in Table \ref{conj_class_table} has the same graph displayed in Figure \ref{conj_class_graph}.
\begin{center}
\begin{tikzpicture}
\tikzset{edge/.style = {->,> = latex'}}
\tikzset{vertex/.style = {shape=circle,draw,minimum size=1.5em}}
[xscale=3, yscale=3, auto=left,every node/.style={circle,fill=blue!20}]
\node[vertex] (n0) at (12,5) {$0$};
\node[vertex] (n1) at (11.618,6.17557) {$1$};
\node[vertex] (n2) at (10.618,6.90211) {$2$};
\node[vertex] (n3) at (9.38197,6.90211) {$3$};
\node[vertex] (n4) at (8.38197,6.17557) {$4$};
\node[vertex] (n5) at (8,5.00001) {$5$};
\node[vertex] (n6) at (8.38196,3.82443) {$6$};
\node[vertex] (n7) at (9.38196,3.09789) {$7$};
\node[vertex] (n8) at (10.618,3.09788) {$8$};
\node[vertex] (n9) at (11.618,3.82442) {$9$};
\draw[edge, bend right=10] (n8) to (n2);
\draw[edge, bend right=10] (n2) to (n8);
\draw[edge, bend right=10] (n7) to (n1);
\draw[edge, bend right=10] (n1) to (n7);
\draw[edge] (n9) to[in=0,out=-80, loop, style={min distance=10mm}] (n9);
\end{tikzpicture}
\captionof{figure}{The graph with edges $\{(d_j,d_{\tau(j)}) \mid 0 \leq j \leq 4 \}$ for each example in Table \ref{conj_class_table}.}
\label{conj_class_graph}
\end{center}

Permutiple graphs enable us to classify examples according to their graph \cite[Definition 6]{holt_5}. Although the example $p=(7,8,9,1,2)_{10} = 4 \cdot (1,9,7,2,8)_{10}$ has the same digits as those in Table \ref{conj_class_table}, the graph of $p$ immediately distinguishes it from the others as it belongs to an entirely different conjugacy class.

\begin{definition}\label{perm_class}
Let $p$ be an $(n,b)$-permutiple with graph $G_p$. We define the {\it class of $p$} to be the collection, $C$, of all $(n,b)$-permutiples, $q$, such that $G_q$ is a subgraph of $G_p$. We also define the graph of the class to be $G_p$, which we denote by $G_C$ and call the {\it graph of $C$.}
\end{definition}

Theorem \ref{conj_class}, coupled with the above definition, tells us that conjugate permutiples are always members of the same permutiple class.

The next result narrows down the possible collection of edges of a permutiple graph \cite[Theorem 3]{holt_5}.

\begin{theorem}
Let $p=(d_k, d_{k-1},\ldots, d_0)_b$ be an $(n,b,\sigma)$-permutiple with graph $G_p$. Then, for every edge, $(d_j,d_{\sigma(j)})$, of $G_p$, it must be that $\lambda \bigl(d_j+(b-n)d_{\sigma(j)}\bigr)\leq n-1$ for all $0\leq j\leq k$, where $\lambda$ gives the least non-negative residue modulo $b$.
\end{theorem}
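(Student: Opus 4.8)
The plan is to derive the claimed inequality directly from the two structural results already available, namely Theorem~\ref{digits_carries_1} and Theorem~\ref{carries}. The key observation is that the carry recurrence $b c_{j+1}-c_j=nd_{\sigma(j)}-d_{j}$ relates the edge data $(d_j,d_{\sigma(j)})$ to consecutive carries, and that all carries are bounded by $n-1$. My goal is to isolate a quantity of the form $d_j+(b-n)d_{\sigma(j)}$ modulo $b$ and show it equals a carry, which is automatically at most $n-1$.

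First I would take the recurrence from Theorem~\ref{digits_carries_1} and solve for the combination appearing in the statement. Rewriting $b c_{j+1}-c_j=nd_{\sigma(j)}-d_j$ as
\begin{equation}
d_j + n d_{\sigma(j)} - b d_{\sigma(j)} = d_j - (nd_{\sigma(j)} - b d_{\sigma(j)}) \nonumber
\end{equation}
is the wrong grouping, so instead I would manipulate the recurrence to produce $d_j+(b-n)d_{\sigma(j)}$ on one side. Starting from $b c_{j+1}-c_j=nd_{\sigma(j)}-d_{j}$, I rearrange to obtain $d_j - nd_{\sigma(j)} = c_j - b c_{j+1}$, and then add $b d_{\sigma(j)}$ to both sides to get
\[
d_j + (b-n)d_{\sigma(j)} = c_j - b c_{j+1} + b d_{\sigma(j)} = c_j + b\bigl(d_{\sigma(j)} - c_{j+1}\bigr).
\]
This is the crucial identity: the left-hand side differs from $c_j$ by an integer multiple of $b$.

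Next I would apply the residue operator $\lambda$ to both sides. Since $\lambda$ returns the least non-negative residue modulo $b$, and since $c_j + b\bigl(d_{\sigma(j)} - c_{j+1}\bigr) \equiv c_j \pmod{b}$, I conclude $\lambda\bigl(d_j+(b-n)d_{\sigma(j)}\bigr) = \lambda(c_j)$. By Theorem~\ref{carries} we know $0 \le c_j \le n-1$, and because $n<b$ the carry $c_j$ is already its own least non-negative residue modulo $b$, so $\lambda(c_j)=c_j$. Combining these gives $\lambda\bigl(d_j+(b-n)d_{\sigma(j)}\bigr)=c_j\leq n-1$, which is exactly the desired bound for every edge $(d_j,d_{\sigma(j)})$, i.e., for all $0\le j\le k$.

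The main obstacle, such as it is, is purely bookkeeping: one must be careful that $\lambda$ is applied correctly and that the claim $\lambda(c_j)=c_j$ genuinely follows from $c_j\le n-1<b$ together with $c_j\ge 0$. The nonnegativity of the carries is implicit in the single-digit multiplication algorithm underlying Theorem~\ref{digits_carries_1}; I would note explicitly that carries are nonnegative so that $c_j$ lies in the range $[0,b)$ and therefore equals its own residue. No deeper machinery is needed—the entire proof is a one-line algebraic rearrangement followed by reduction modulo $b$ and an appeal to the carry bound.
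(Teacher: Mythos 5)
Your proof is correct: the identity $d_j+(b-n)d_{\sigma(j)}=c_j+b\bigl(d_{\sigma(j)}-c_{j+1}\bigr)$ obtained by rearranging Theorem~\ref{digits_carries_1}, together with $0\leq c_j\leq n-1<b$ from Theorem~\ref{carries}, immediately gives $\lambda\bigl(d_j+(b-n)d_{\sigma(j)}\bigr)=c_j\leq n-1$. The paper imports this theorem from earlier work without reproving it, and your derivation is exactly the intended one, so there is nothing to add.
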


The above allows us to gather all possible edges of a permutiple graph into a single graph \cite[Definition 4]{holt_5}.

\begin{definition}\label{mother_graph}\label{mg_def}
The $(n,b)$-\textit{mother graph}, denoted by $M$, is the graph having all base-$b$ digits as its vertices and the collection of all edges, $(d_1,d_2)$, which satisfy the inequality $\lambda\bigl(d_1+(b-n)d_{2}\bigr)\leq n-1$.
\end{definition}

The next result is also fundamental to the methods presented both here and in other work \cite[Theorem 6]{holt_5}.

\begin{theorem}
Let $C$ be an $(n,b)$-permutiple class. Then, $G_C$ is a union of cycles of $M$.
\label{cycle_union}
\end{theorem}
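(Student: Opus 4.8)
The plan is to reduce the statement to a standard fact about directed graphs, exploiting that the digit permutation $\sigma$ is a bijection. By Definition \ref{perm_class} we may fix an $(n,b,\sigma)$-permutiple $p=(d_k,d_{k-1},\ldots,d_0)_b$ with $G_C=G_p$. First I would record that $G_p$ is a subgraph of $M$: by the theorem preceding Definition \ref{mg_def}, every edge $(d_j,d_{\sigma(j)})$ of $G_p$ satisfies $\lambda\bigl(d_j+(b-n)d_{\sigma(j)}\bigr)\le n-1$, which is exactly the membership condition for an edge of $M$ in Definition \ref{mg_def}. Hence it suffices to show that every edge of $G_p$ lies on a directed cycle that is itself contained in $G_p$; the union of these cycles is then $G_p=G_C$, and each is a cycle of $M$.

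Second, I would extract the cyclic structure directly from $\sigma$. Decompose $\sigma$ into disjoint cycles. A cycle $(j_0\ j_1\ \cdots\ j_{m-1})$ of $\sigma$, where $\sigma(j_t)=j_{t+1}$ with subscripts read modulo $m$, contributes the edges $(d_{j_t},d_{\sigma(j_t)})=(d_{j_t},d_{j_{t+1}})$ and hence the closed walk
\[
d_{j_0}\to d_{j_1}\to\cdots\to d_{j_{m-1}}\to d_{j_0}
\]
in $G_p$. Conversely, every edge $(d_j,d_{\sigma(j)})$ of $G_p$ arises in precisely the closed walk coming from the cycle of $\sigma$ containing the index $j$, so these closed walks collectively traverse every edge of $G_p$. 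A fixed point of $\sigma$ (a $1$-cycle) contributes a loop $(d_j,d_j)$, which is permitted as a length-one cycle of $M$ and accounts for features such as the loop at vertex $9$ in Figure \ref{conj_class_graph}.

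Finally, I would invoke the elementary fact that any closed walk in a directed graph decomposes into simple directed cycles: if a vertex repeats along the walk, split the walk at the repetition into two shorter closed walks and recurse until each piece is simple (a loop being the base case of length one). Applying this to each of the closed walks above expresses $G_p$ as a union of simple directed cycles, each a subgraph of $G_p\subseteq M$ and therefore a cycle of $M$, which is the claim. I expect the only delicate point to be bookkeeping rather than substance: one must be careful that a single cycle of $\sigma$ may yield a closed walk that is \emph{not} itself simple (when a digit value recurs among the indices of that cycle), so the decomposition step is genuinely needed, and one must read ``union of cycles'' in the edge-set sense, since $E_p$ is a set and repeated traversals of an edge collapse. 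An equivalent route is to observe that, because $\sigma$ is a bijection, each vertex of $G_p$ has equal in- and out-degree, whence the edges decompose into cycles; but the explicit passage through the cycles of $\sigma$ is the cleaner argument and mirrors the construction visible in Figure \ref{conj_class_graph}.
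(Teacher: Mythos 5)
Your argument is correct and complete. Note first that the paper does not actually prove Theorem \ref{cycle_union}; it imports the statement from prior work (\cite[Theorem 6]{holt_5}), so there is no in-paper proof to compare against. Your route --- observe $G_p\subseteq M$ via the edge inequality, decompose $\sigma$ into disjoint index-cycles, read off the resulting closed walks on the digit vertices, and then split each closed walk into simple directed cycles --- is the natural proof, and you correctly flag the one genuine subtlety: a single cycle of $\sigma$ can produce a non-simple closed walk when digit values repeat, so the walk-to-cycle decomposition is really needed, and ``union'' must be read on edge sets.

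One small caution about your closing aside: the claim that ``each vertex of $G_p$ has equal in- and out-degree'' is true only for the multigraph of index-edges $(d_j,d_{\sigma(j)})$ counted with multiplicity, not for the set-collapsed digraph $G_p$. For instance, with $d_0=d_1=1$, $d_2=d_3=7$, $d_4=4$, $d_5=5$ and $\sigma=(0\,2\,4)(1\,3\,5)$, the edge set is $\{(1,7),(7,4),(7,5),(4,1),(5,1)\}$, in which vertex $1$ has out-degree $1$ but in-degree $2$; the graph is still a union of cycles (the two triangles share the edge $(1,7)$), but not an edge-disjoint one, so an Eulerian-style decomposition of $G_p$ itself is unavailable. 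Your main argument sidesteps this entirely, since the simple cycles it produces are allowed to overlap in $E_p$ after collapsing; the degree-balance shortcut would need to be run on the multigraph and then collapsed.
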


\begin{example}\label{example_00}
The $(4,10)$-mother graph is displayed in Figure \ref{4_10_mg}. Letting $p=(8,7,9,1,2)_{10}=4 \cdot (2,1,9,7,8)_{10}$ from Table \ref{conj_class_table}, and $C$ be the $(4,10)$-permutiple class with graph $G_C=G_p$, Figure \ref{4_10_mg} highlights the graph of $C$ in bold red.

\begin{center}
\begin{tikzpicture}
\tikzset{edge/.style = {->,> = latex'}}
\tikzset{vertex/.style = {shape=circle,draw,minimum size=1.5em}}
[xscale=2, yscale=2, auto=left,every node/.style={circle,fill=blue!20}]
\node[vertex] (n0) at (13,5) {$0$};
\node[vertex,red, very thick] (n1) at (12.427051918969068,6.763354468797628) {$\bf 1$};
\node[vertex,red, very thick] (n2) at (10.927054011580958,7.853168564878643) {$\bf 2$};
\node[vertex] (n3) at (9.07295355956129,7.853171024889661) {$3$};
\node[vertex] (n4) at (7.5729527602588975,6.76336090919162) {$4$};
\node[vertex] (n5) at (7.000000000010562,5.00000796076938) {$5$};
\node[vertex] (n6) at (7.572943401820056,3.236651971608781) {$6$};
\node[vertex,red, very thick] (n7) at (9.072938417283323,2.1468338951524664) {$\bf 7$};
\node[vertex,red, very thick] (n8) at (10.927038869289936,2.1468265151194124) {$\bf 8$};
\node[vertex,red, very thick] (n9) at (12.427042560496046,3.236632650426805) {$\bf 9$};
\draw[edge](n0) to[in=40,out=-40, loop, style={min distance=10mm}] (n0);
\draw[edge, bend right=5](n0) to (n2);
\draw[edge, bend right=0](n0) to (n5);
\draw[edge, bend right=5](n0) to (n7);
\draw[edge, bend right=0](n1) to (n0);
\draw[edge, bend right=0](n1) to (n2);
\draw[edge, bend right=5](n1) to (n5);
\draw[edge, red, bend right=5, very thick](n1) to (n7);
\draw[edge, bend right=5](n2) to (n0);
\draw[edge, bend right=0](n2) to (n3);
\draw[edge, bend right=0](n2) to (n5);
\draw[edge, red, bend right=5, very thick](n2) to (n8);
\draw[edge, bend right=0](n3) to (n0);
\draw[edge](n3) to[in=140,out=60, loop, style={min distance=10mm}] (n3);
\draw[edge, bend right=5](n3) to (n5);
\draw[edge, bend right=0](n3) to (n8);
\draw[edge, bend right=0](n4) to (n1);
\draw[edge, bend right=0](n4) to (n3);
\draw[edge, bend right=5](n4) to (n6);
\draw[edge, bend right=5](n4) to (n8);
\draw[edge, bend right=5](n5) to (n1);
\draw[edge, bend right=5](n5) to (n3);
\draw[edge, bend right=0](n5) to (n6);
\draw[edge, bend right=0](n5) to (n8);
\draw[edge, bend right=0](n6) to (n1);
\draw[edge, bend right=5](n6) to (n4);
\draw[edge](n6) to[in=190,out=270, loop, style={min distance=10mm}] (n6);
\draw[edge, bend right=0](n6) to (n9);
\draw[edge, red, bend right=5,very thick](n7) to (n1);
\draw[edge, bend right=0](n7) to (n4);
\draw[edge, bend right=0](n7) to (n6);
\draw[edge, bend right=5](n7) to (n9);
\draw[edge, red, bend right=5, very thick](n8) to (n2);
\draw[edge, bend right=5](n8) to (n4);
\draw[edge, bend right=0](n8) to (n7);
\draw[edge, bend right=0](n8) to (n9);
\draw[edge, bend right=0](n9) to (n2);
\draw[edge, bend right=0](n9) to (n4);
\draw[edge, bend right=5](n9) to (n7);
\draw[edge,red, very thick](n9) to[in=0,out=-80, loop, style={min distance=10mm}] (n9);
\end{tikzpicture}
\captionof{figure}{The $(4,10)$-mother graph with the graph of $C$ displayed in bold red.}
\label{4_10_mg}
\end{center}
\end{example}

\subsection{Finite-state-machine description of general digit-preserving multiplication and the Hoey-Sloane graph}

To create a framework for describing digit-preserving multiplication, we construct a finite-state machine which describes single-digit multiplication by a number, $n$, less than the base, $b$. Here, the carries take on a central role in the discussion. By Theorem \ref{carries}, all of the carries in a digit-preserving multiplication are less than the multiplier, $n$. Thus, the possible states of the machine are non-negative integers less than $n$. The input alphabet is the collection of edges of the mother graph, $M$, and the statement of Theorem \ref{digits_carries_1} motivates the definition of the state-transition function,
\begin{equation}\label{state_transition}
 c_2=(nd_{2}-d_1+c_1)\div b,
\end{equation}
where the input $(d_1,d_2)$ enables a transition from state $c_1$ to state $c_2$. This transition defines a labeled edge on the state diagram as seen in Figure \ref{stae_diagram}.
\begin{center}
\begin{tikzpicture}
\tikzset{edge/.style = {->,> = latex'}}
\tikzset{vertex/.style = {shape=circle,draw,minimum size=1.5em}}
[xscale=2, yscale=2, auto=left,every node/.style={circle,fill=blue!20}]
\node[vertex] (n0) at (7,5) {$c_1$};
\node[vertex] (n1) at (13,5) {$c_2$};
\draw[edge] (n0) edge node[above] {$(d_1,d_2)$} (n1);
\end{tikzpicture}
\captionof{figure}{A labeled edge on the state diagram.}
\label{stae_diagram}
\end{center}
The first carry, $c_0$, of a single-digit multiplication is zero by definition \cite{holt_3}. In the present context, this is to say that the only possible initial state is zero. Also, assuming the product of a single-digit multiplication is a $(k+1)$-digit number, $(d_k,d_{k-1},\ldots,d_0)$, the state $c_{k+1}$ must also be zero, otherwise, the result would be a $(k+2)$-digit number. Thus, the zero state is the only possible accepting state. We call the above machine the $(n,b)$-{\it Hoey-Sloane machine}, and its state diagram is called the $(n,b)$-{\it Hoey-Sloane graph}, which we denote by $\Gamma$.

We note here that an input, $(d_1,d_2)$, which allows a transition from state $c_1$ to state $c_2$ is generally not unique. In this way, the edge label on the state diagram from $c_1$ to $c_2$ may not be a single input, but a list of inputs. That is, we assign a collection of suitable inputs to each edge on $\Gamma$ by the mapping  $(c_1,c_2) \mapsto \{(d_1,d_2)\in E_M \mid c_2=(nd_{2}-d_1+c_1)\div b\}$, where $E_M$ is the collection of edges of $M$. We also note that we could alternatively define a multigraph where a unique multi-edge is assigned to each input in $E_M$, as done in other work \cite{holt_6}. We give some details of this construction later.

We let $L$ denote the  language of input strings accepted by the $(n,b)$-Hoey-Sloane machine. We may describe $L$ as finite sequences of edge-label inputs which define walks on $\Gamma$ whose initial and final states are the zero state. Such walks we call {\it $L$-walks}. Members of $L$ which produce permutiple numbers are called {\it $(n,b)$-permutiple strings.}

In this new setting, Theorem \ref{cycle_union} gives us the following result \cite[Corollary 1]{holt_5}.

\begin{corollary}\label{cycle_union_2}
Let $s=(d_{0},\widehat{d_{0}})(d_{1},\widehat{d_{1}})\cdots(d_{k},\widehat{d_{k}})$ be a member of $L$. If $s$ is a permutiple string, then the collection of ordered-pair inputs of $s$ is a union of cycles of $M$.
\end{corollary}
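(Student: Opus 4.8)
The plan is to reduce the statement to Theorem \ref{cycle_union} by recognizing that the ordered-pair inputs of a permutiple string are nothing other than the edges of the corresponding permutiple's graph. First I would unpack what it means for $s$ to be a permutiple string: by definition $s$ produces an $(n,b,\sigma)$-permutiple $p=(d_k,d_{k-1},\ldots,d_0)_b$. Comparing the state-transition rule \eqref{state_transition} with Theorem \ref{digits_carries_1}, the input read at position $j$ must be the pair $(d_j,d_{\sigma(j)})$, so that $\widehat{d_j}=d_{\sigma(j)}$ for every $0\le j\le k$. Hence the collection of ordered-pair inputs of $s$ is exactly $\{(d_j,d_{\sigma(j)})\mid 0\le j\le k\}$.

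Next I would observe that this is precisely the edge collection $E_p$ of the graph $G_p$ in Definition \ref{class_graph}. Taking $C$ to be the class of $p$ in the sense of Definition \ref{perm_class}, we have $G_C=G_p$, so the inputs of $s$ are exactly the edges of $G_C$. Theorem \ref{cycle_union} then applies directly: $G_C$ is a union of cycles of $M$, whence the collection of ordered-pair inputs of $s$ is a union of cycles of $M$, as claimed. The bulk of the work is therefore already carried by Theorem \ref{cycle_union}; the corollary merely transports that conclusion from the language of permutiple graphs into the language of Hoey-Sloane inputs.

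The one point requiring care — and the step I expect to be the main obstacle — is the bookkeeping in the presence of repeated digits. The edge collection in Definition \ref{class_graph} is written in set-builder notation, whereas the inputs of $s$ are read off index by index and may repeat; to make ``union of cycles'' meaningful at this level of precision I would invoke the multiset convention already adopted after Definition \ref{conj_class_def}, treating $\{d_k,\ldots,d_0\}$ and hence $E_p$ as multisets. With that convention, the index-by-index identification of the inputs of $s$ with the edges of $G_p$ is a genuine multiset equality, and the cycle decomposition guaranteed by Theorem \ref{cycle_union} transfers without loss. As a final sanity check I would note that each input $(d_j,d_{\sigma(j)})$ is automatically an edge of $M$, since the alphabet of the Hoey-Sloane machine is by construction the edge set of $M$.
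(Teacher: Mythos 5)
Your proposal is correct and follows the same route the paper takes: the paper presents this corollary as an immediate restatement of Theorem \ref{cycle_union}, obtained by identifying the ordered-pair inputs $(d_j,\widehat{d_j})=(d_j,d_{\sigma(j)})$ of a permutiple string with the edge collection $E_p$ of the permutiple's graph. Your additional remark about treating the inputs as a multiset in the presence of repeated digits is a sensible precaution that is consistent with the conventions the paper adopts elsewhere.
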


The converse of Corollary \ref{cycle_union_2} is not true in general; it is easy to find members of $L$ whose ordered pairs make up cycles on $M$, but are not permutiple strings \cite{holt_5}. In order for an input string, $s=(d_{0},\widehat{d_{0}})(d_{1},\widehat{d_{1}})\cdots(d_{k},\widehat{d_{k}})$, in $L$ to qualify as a permutiple string, it must be that the two collections of base-$b$ digits formed by the left and right components of the inputs of $s$ must be the same. In other words, $\{d_{k},\ldots, d_{1}, d_{0}\}$ and $\{\widehat{d_{k}},\ldots \widehat{d_{1}}, \widehat{d_{0}}\}$ must form the same multiset. Since every member of $L$ describes a valid single-digit, base-$b$ multiplication by $n$, multiset unions of cycles of $M$ which can be ordered into an element, $s$, of $L$, allow us to form permutiple strings. Since $s$ may be visualized as an $L$-walk on  the $(n,b)$-Hoey-Sloane graph, $\Gamma$,  the edges of $\Gamma$ associated with the inputs of $s$ must define a strongly-connected subgraph of $\Gamma$ containing the zero state.

It is now advantageous to reestablish some additional terminology and notation used in previous efforts \cite{holt_5}. In this article, a union of multisets is denoted by $\uplus$. For instance, $\{1,2,2,3\} \uplus \{2,3,4\} = \{1,2,2,2,3,3,4\}$. Let $C_0,C_1,\ldots,C_m$ be the cycles of $M$. For each $C_j$, construct a labeled subgraph of $\Gamma$ which we denote by $\Gamma_j$.  The edges of $\Gamma_j$ are pairs, $(c_1,c_2)$, for which the collection $\mathscr{E}_j=\{(d_1,d_2)\in C_j \mid c_2=(nd_{2}-d_1+c_1)\div b\}$ is nonempty. The edge label is then the list of elements of $\mathscr{E}_j$. With the collection $\mathscr{E}_j$ in hand, we exclude any state from the collection of vertices where the indegree and outdegree are both zero. We refer to $\Gamma_j$ as the {\it image of $C_j$,} or simply as a {\it cycle image}. We now suppose $I$ is a multiset whose support is a subset, $J$, of $\{0,1,\ldots,m\}$. The key observation in previous efforts \cite{holt_5} is that $L$-walks can only occur on subgraphs of $\Gamma$ which are strongly connected and contain the zero state. Thus, our search for permutiple strings is reduced to considering cycle image unions, $\Gamma_{J}=\bigcup_{j\in J}\Gamma_j$ (edge labels included), which are strongly-connected  and contain the zero state. If the corresponding multiset union of mother-graph cycles, $C_{I}=\biguplus_{j\in I}C_j$, can be ordered into a string, $s$, belonging to $L$, then $s$ is a permutiple string. This is to say that a strongly-connected graph, $\Gamma_J$, containing the zero state is a necessary condition for being able to order the elements of $C_I$ into a permutiple string. Later, in Example \ref{example_000}, we provide a counterexample which shows that these conditions are not sufficient for this same purpose.

\begin{example}\label{example_0}
The cycles of $G_C$ from Example \ref{example_00}  are $C_0=\{(9,9)\}$, $C_1=\{(2,8),(8,2)\}$, and $C_2=\{(1,7),(7,1)\}$. Table \ref{4_10_mi} depicts the cycle images corresponding to the cycles of $G_C$.

\begin{center}
\begin{tabular}{|c|c|l|c|}
\hline
&
{\bf Cycle of $G_C$}
& {\bf Cycle Image}
&\\\hline
$C_0$
&
\begin{tikzpicture} \tikzset{edge/.style = {->,> = latex'}} \tikzset{vertex/.style = {shape=circle,draw,minimum size=1.5em}} [xscale=2, yscale=2, auto=left,every node/.style={circle,fill=blue!20}]
\node[vertex] (n1) at (0,0) {$9$};
\draw[edge](n1) to[in=0,out=-80, loop, style={min distance=7mm}] (n1);
\end{tikzpicture}
&
\begin{tikzpicture}
\tikzset{edge/.style = {->,> = latex'}}
\tikzset{vertex/.style = {shape=circle,draw,minimum size=1.5em}}
[xscale=2, yscale=2, auto=left,every node/.style={circle,fill=blue!20}]
\color{white}\node[vertex,accepting,initial,white] (n0) at (0,5) {$0$};
\color{black}\node[vertex] (n1) at (3,5) {$3$};
\color{black}\draw[edge](n1) to[in=90,out=30, loop, style={min distance=10mm}] node[pos=0.25,right] {$(9,9)$} (n1);
\end{tikzpicture}
&
$\Gamma_0$
\\\hline
$C_1$
&
\begin{tikzpicture}
\tikzset{edge/.style = {->,> = latex'}}
\tikzset{vertex/.style = {shape=circle,draw,minimum size=1.5em}}
[xscale=3, yscale=3, auto=left,every node/.style={circle,fill=blue!20}]
\node[vertex] (n2) at (0,2) {$2$};
\node[vertex] (n8) at (0,0) {$8$};
\draw[edge, bend right=10] (n8) to (n2);
\draw[edge, bend right=10] (n2) to (n8);
\end{tikzpicture}
&
\begin{tikzpicture}
\tikzset{edge/.style = {->,> = latex'}}
\tikzset{vertex/.style = {shape=circle,draw,minimum size=1.5em}}
[xscale=2, yscale=2, auto=left,every node/.style={circle,fill=blue!20}]
\node[vertex,accepting,initial] (n0) at (0,5) {$0$};
\node[vertex] (n1) at (3,5) {$3$};
\draw[edge,bend right=10] (n0) edge node[below] {$(2,8)$} (n1);
\draw[edge](n0) to[in=150,out=90, loop, style={min distance=10mm}] node[pos=0.75,left] {$(8,2)$} (n0);
\end{tikzpicture}
&
$\Gamma_1$
\\\hline
$C_2$
&
\begin{tikzpicture}
\tikzset{edge/.style = {->,> = latex'}}
\tikzset{vertex/.style = {shape=circle,draw,minimum size=1.5em}}
[xscale=3, yscale=3, auto=left,every node/.style={circle,fill=blue!20}]
\node[vertex] (n1) at (1.3,1.7) {$1$};
\node[vertex] (n7) at (0,0) {$7$};
\draw[edge, bend right=10] (n7) to (n1);
\draw[edge, bend right=10] (n1) to (n7);
\end{tikzpicture}
&
\begin{tikzpicture}
\tikzset{edge/.style = {->,> = latex'}}
\tikzset{vertex/.style = {shape=circle,draw,minimum size=1.5em}}
[xscale=2, yscale=2, auto=left,every node/.style={circle,fill=blue!20}]
\node[vertex,accepting,initial] (n0) at (0,5) {$0$};
\node[vertex] (n1) at (3,5) {$3$};
\draw[edge, bend right=10] (n1) edge node[above] {$(7,1)$} (n0);
\draw[edge](n1) to[in=90,out=30, loop, style={min distance=10mm}] node[pos=0.25,right] {$(1,7)$} (n1);
\end{tikzpicture}
&
$\Gamma_2$
\\\hline
\end{tabular}
\captionof{table}{Cycles of $G_C$ and their corresponding cycle images.}\label{4_10_mi}
\end{center}

The $(4,10)$-Hoey-Sloane graph is shown in Figure \ref{4_10_hsg}. The graph of the union of the cycle images, $\Gamma_J=\Gamma_0 \cup \Gamma_1 \cup \Gamma_2$, corresponding to the cycles of $G_C$ is highlighted in bold red.

\begin{center}
\begin{tikzpicture}
\tikzset{edge/.style = {->,> = latex'}}
\tikzset{vertex/.style = {shape=circle,draw,minimum size=1.5em}}
[xscale=2, yscale=2, auto=left,every node/.style={circle,fill=blue!20}]
\node[vertex,red,initial,accepting,very thick] (n0) at (0,5) {$\bf 0$};
\node[vertex] (n1) at (3.0,5) {$1$};
\node[vertex] (n2) at (6.0,5) {$2$};
\node[vertex,red,very thick] (n3) at (9.0,5) {$\bf 3$};
\draw[edge,red,very thick](n0) to[in=150,out=90, loop, style={min distance=10mm}] node[left] {\scriptsize{$\color{black}{(0,0), (4,1), }\,\color{red}{\bf (8,2)}$}} (n0);
\draw[edge](n1) to[in=110,out=70, loop, style={min distance=7mm}] node[above] {\scriptsize{$(3,3),(7,4)$}} (n1);
\draw[edge](n2) to[in=110,out=70, loop, style={min distance=7mm}] node[above] {\scriptsize{$(2,5),(6,6)$}} (n2);
\draw[edge,red,very thick](n3) to[in=90,out=30, loop, style={min distance=10mm}] node[right] {\scriptsize{$\color{red}{\bf (1,7)}\color{black}{,(5,8),}\color{red}{\bf (9,9)}$}} (n3);
\draw[edge, bend left=10](n0) edge node[above] {\scriptsize{$(2,3),(6,4)$}} (n1);
\draw[edge, bend left=10](n1) edge node[below] {\scriptsize{$(1,0),(5,1),(9,2)$}} (n0);
\draw[edge, bend left=10](n2) edge node[below] {\scriptsize{$(0,2),(4,3),(8,4)$}} (n1);
\draw[edge, bend left=10](n1) edge node[above] {\scriptsize{$(1,5),(5,6),(9,7)$}} (n2);
\draw[edge, bend left=55](n1) edge node[above] {\scriptsize{$(3,8),(7,9)$}} (n3);
\draw[edge, bend right=10](n2) edge node[below] {\scriptsize{$(0,7),(4,8),(8,9)$}} (n3);
\draw[edge, bend right=10](n3) edge node[above] {\scriptsize{$(3,5),(7,6)$}} (n2);
\draw[edge, bend left=50](n3) edge node[below] {\scriptsize{$(1,2),(5,3),(9,4)$}} (n1);
\draw[edge, bend right=50](n0) edge node[below] {\scriptsize{$(0,5),(4,6),(8,7)$}} (n2);
\draw[edge, red, bend right=65,very thick](n0) edge node[below] {\scriptsize{${\bf (2,8)}\color{black}{,(6,9)}$}} (n3);
\draw[edge, bend right=55](n2) edge node[above] {\scriptsize{$(2,0),(6,1)$}} (n0);
\draw[edge, red, bend right=65,very thick](n3) edge node[above] {\scriptsize{$\color{black}{(3,0),}\,\color{red}{{\bf (7,1)}}$}} (n0);
\end{tikzpicture}
\captionof{figure}{The $(4,10)$-Hoey-Sloane graph with cycle images of $G_C$ in bold red.}
\label{4_10_hsg}
\end{center}

As a concrete example of the above ideas, consider the multiset union of mother-graph cycles
\[
C_I=C_0 \uplus C_1\uplus C_1 \uplus C_2 \uplus C_2
=\{(9,9),(8,2),(8,2),(2,8),(2,8),(7,1),(7,1),(1,7),(1,7)\}.
\]
With the help of $\Gamma_J$, there are multiple ways to order $C_I$ into a member of $L$. For instance, $(8,2)(8,2)(2,8)(9,9)(1,7)(1,7)(7,1)(2,8)(7,1)$ is an input string in $L$ which gives the $(4,10)$-permutiple $(7,2,7,1,1,9,2,8,8)_{10}=4\cdot(1,8,1,7,7,9,8,2,2)_{10}$.
Another example, $(8,2)(2,8)(1,7)(7,1)(2,8)(9,9)(1,7)(7,1)(8,2)$, corresponds to $(8,7,1,9,2,7,1,2,8)_{10}=4\cdot(2,1,7,9,8,1,7,8,2)_{10}$.
\end{example}

\subsection{A multigraph representation of the Hoey-Sloane machine}
The question remaining from the above is which multiset unions of mother-graph cycles yield permutiple strings. The author used a multigraph construction to answer this question \cite{holt_6}. Instead of assigning a collection of mother-graph inputs, $\mathscr{E}_j$, to a single edge, $(c_1,c_2)$, as described above, we may define a multiset of labeled multi-edges where each element of $\mathscr{E}_j$ is individually assigned an edge, $(c_1,c_2)$. That is, distinct inputs, $(d_1,d_2)$ and $(\widehat{d}_1,\widehat{d}_2)$, determine two distinct multi-edges connecting state $c_1$ to state $c_2$, as depicted in Figure \ref{stae_diagram_2}.
\begin{center}
\begin{tikzpicture}
\tikzset{edge/.style = {->,> = latex'}}
\tikzset{vertex/.style = {shape=circle,draw,minimum size=1.5em}}
[xscale=2, yscale=2, auto=left,every node/.style={circle,fill=blue!20}]
\node[vertex] (n0) at (7,5) {$c_1$};
\node[vertex] (n1) at (13,5) {$c_2$};
\draw[edge, bend left=10] (n0) edge node[above] {$(d_1,d_2)$} (n1);
\draw[edge, bend left=40] (n0) edge node[above] {$(\widehat{d}_1,\widehat{d}_2)$} (n1);
\end{tikzpicture}
\captionof{figure}{Two multi-edges representing two possible transitions from state $c_1$ to state $c_2$.}
\label{stae_diagram_2}
\end{center}
From previous work, we know that an edge label, $(d_1,d_2)$, cannot appear on distinct edges of $\Gamma$ \cite[Theorem 5]{holt_6}, and that every input, $(d_1,d_2)$, enables some transition, $(c_1,c_2)$ \cite[Theorem 6]{holt_6}. Thus, for each input edge, $(d_1,d_2)$, of $M$, we may find a unique pair, $(c_1,c_2)$, which solves Equation (\ref{state_transition}). That is, letting $N$ be the collection of non-negative integers less than the multiplier, $n$, the above determines a well-defined mapping, $\mu: E_M \rightarrow N \times N \times E_M$, given by $(d_1,d_2) \mapsto (c_1,c_2,(d_1,d_2))$. From there, we construct the {\it multi-image of $C_j$}, denoted by $\Delta_j$, in similar fashion to the cycle images. To define the edges of $\Delta_j$, apply $\mu$ to each element of the mother-graph cycle $C_j$ to obtain its corresponding labeled edge, $(c_1,c_2,(d_1,d_2))$. As with the cycle images, we exclude any state from the collection of vertices where the indegree and outdegree are both zero. We may now state conditions which are both necessary and sufficient for the existence of permutiple strings \cite[Corollary 2]{holt_6}.

\begin{corollary}\label{indegree_outdegree}
Let $\{C_0,C_1,\ldots,C_m\}$ be the collection of cycles of $M$, and let $\Delta_j$ be the corresponding multi-image of $C_j$, Also, let $I$ be a multiset whose support is a subset of $\{0,1,\ldots,m\}$, Then, a multiset union of mother-graph cycles, $C_I=\biguplus_{j\in I}C_j$, may be ordered into a permutiple string if and only if the corresponding multigraph union of the cycle multi-images, $\Delta_I=\biguplus_{j\in I}\Delta_j$, contains the zero state, is strongly connected, and the indegree is equal to the outdegree at each vertex.
\end{corollary}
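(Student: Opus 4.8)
The plan is to recognize the statement as a translation of the classical Eulerian-circuit criterion for directed multigraphs into the language of cycle multi-images. First I would observe that, for a union of cycles, the ``permutiple string'' requirement reduces to the purely combinatorial requirement of being an $L$-walk. Indeed, because each cycle $C_j$ of $M$ is a directed cycle, the multiset of its left-hand components equals the multiset of its right-hand components; hence for any multiset union $C_I=\biguplus_{j\in I}C_j$ the collection of left components and the collection of right components coincide as multisets. Thus the digit-preservation condition that distinguishes permutiple strings from arbitrary members of $L$ is satisfied automatically, and ``$C_I$ may be ordered into a permutiple string'' becomes equivalent to ``$C_I$ may be ordered into a member of $L$,'' that is, into an $L$-walk on $\Gamma$ beginning and ending at the zero state.

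Next I would make the correspondence with $\Delta_I$ precise. The results guaranteeing that $\mu$ is well defined---no input labels two distinct transitions, and every input enables a transition---show that each element $(d_1,d_2)$ of the multiset $C_I$ is carried bijectively to exactly one labeled multi-edge $(c_1,c_2,(d_1,d_2))$ of $\Delta_I$. An ordering $e_1 e_2\cdots e_{|C_I|}$ of $C_I$ is a valid walk on $\Gamma$ exactly when the target state of each $e_i$ equals the source state of $e_{i+1}$, and it is an $L$-walk exactly when the source of $e_1$ and the target of $e_{|C_I|}$ are both the zero state. Since this uses each element of $C_I$---equivalently each multi-edge of $\Delta_I$---exactly once, I would conclude that orderings of $C_I$ into members of $L$ correspond precisely to Eulerian circuits of $\Delta_I$ based at the zero state.

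With this reduction in hand, both directions follow from the standard characterization of Eulerian circuits in directed multigraphs. For the forward direction, an Eulerian circuit based at zero is a closed walk traversing every multi-edge once; it must contain the zero state, each passage through a vertex consumes one incoming and one outgoing edge so the indegree equals the outdegree everywhere, and the traversal links all nonzero-degree vertices into one strongly connected component---so all three stated conditions hold. For the converse, if $\Delta_I$ contains the zero state, is strongly connected, and has matching indegree and outdegree at every vertex, then by the classical theorem it admits an Eulerian circuit; since zero lies on this circuit I may cyclically rotate it to begin and end at the zero state, producing the desired ordering.

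I expect the only real obstacle to be the translation carried out in the first two paragraphs rather than the graph theory of the last. The delicate points are verifying that the union-of-cycles hypothesis makes digit preservation free (so that membership in $L$ is all that must be arranged) and checking that head-to-tail adjacency of consecutive inputs in an $L$-walk corresponds exactly, under the well-defined map $\mu$, to edge adjacency in $\Delta_I$. Once these are nailed down, the biconditional is precisely Euler's theorem for directed multigraphs and requires no further work.
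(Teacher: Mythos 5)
Your argument is correct: the reduction of ``orderable into a permutiple string'' to ``orderable into a member of $L$'' (via the left/right multiset identity for unions of directed cycles), the identification of such orderings with Eulerian circuits of $\Delta_I$ through the zero state via the well-definedness of $\mu$, and the final appeal to Euler's theorem for directed multigraphs are exactly the intended route. Note that this paper does not prove the corollary itself --- it is imported from \cite[Corollary 2]{holt_6} --- but your approach matches the one the paper signals in Example \ref{example_7}, where it remarks that the stated conditions guarantee Eulerian circuits and cites \cite{bang,farrell}.
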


If we apply $\mu$ to all the edges of $M$, we obtain the {\it $(n,b)$-Hoey-Sloane multigraph} \cite{holt_6}, which we denote by $\Delta$ to distinguish it from $\Gamma$. Although $\Gamma$ and $\Delta$ are different objects, they are both equivalent representations of the Hoey-Sloane machine, as mentioned earlier.

\begin{example}\label{example_000}
Picking up where Example \ref{example_0} leaves off, we note that the multigraph representation of each cycle multi-image, $\Delta_j$, looks the same as the graph of its cycle-image counterpart, $\Gamma_j$, shown in Table \ref{4_10_mi}. The multi-image union, $\Delta_I=\Delta_0 \uplus \Delta_1 \uplus \Delta_1 \uplus \Delta_2 \uplus \Delta_2$, corresponding to $C_I$ from Example \ref{example_0}, is shown in Figure \ref{4_10_mi_union_1}.  Since $\Delta_I$ contains the zero state, is strongly connected, and the indegree is equal to the outdegree at each vertex, Corollary \ref{indegree_outdegree} guarantees that we may find orderings like the one given in Example \ref{example_0}. In fact, we may use Figure \ref{4_10_mi_union_1} to determine all orderings by traversing all Eulerian circuits beginning and ending with the zero state.
\begin{center}
\begin{tikzpicture}
\tikzset{edge/.style = {->,> = latex'}}
\tikzset{vertex/.style = {shape=circle,draw,minimum size=1.5em}}
[xscale=2, yscale=2, auto=left,every node/.style={circle,fill=blue!20}]
\node[vertex,accepting,initial] (n0) at (0,0) {$0$};
\node[vertex] (n1) at (4,0) {$3$};
\draw[edge,bend right=50] (n0) edge node[below] {$(2,8)$} (n1);
\draw[edge,bend right=10] (n0) edge node[below] {$(2,8)$} (n1);
\draw[edge, bend right=10] (n1) edge node[above] {$(7,1)$} (n0);
\draw[edge, bend right=50] (n1) edge node[above] {$(7,1)$} (n0);
\draw[edge](n0) to[in=140,out=80, loop, style={min distance=7mm}] node[above] {$(8,2)$} (n0);
\draw[edge](n0) to[in=160,out=60, loop, style={min distance=30mm}] node[above] {$(8,2)$} (n0);
\draw[edge](n1) to[in=100,out=40, loop, style={min distance=7mm}] node[above] {$(1,7)$} (n1);
\draw[edge](n1) to[in=120,out=20, loop, style={min distance=30mm}] node[above] {$(1,7)$} (n1);
\draw[edge](n1) to[in=-40,out=-100, loop, style={min distance=7mm}] node[below] {$(9,9)$} (n1);
\end{tikzpicture}
\captionof{figure}{The multi-image union $\Delta_I=\Delta_0 \uplus \Delta_1 \uplus \Delta_1 \uplus \Delta_2 \uplus \Delta_2$ corresponding to the multiset union $C_I=C_0 \uplus C_1\uplus C_1 \uplus C_2 \uplus C_2$ of mother-graph cycles.}
\label{4_10_mi_union_1}
\end{center}

We also mentioned in Example \ref{example_0} that not every multiset union of mother-graph cycles, $C_I$, can be ordered into a permutiple string. As an example, consider $C_I=C_1\uplus C_1 \uplus C_2$. Inspecting the corresponding multi-image union, $\Delta_I=\Delta_1 \uplus \Delta_1 \uplus \Delta_2$, which is depicted in Figure \ref{4_10_mi_union_2}, we see that ordering $C_I$ into a member of $L$ is not possible. We may arrive at this same conclusion by using Corollary \ref{indegree_outdegree}; the indegrees and outdegrees are not equal for both vertices.

\begin{center}
\begin{tikzpicture}
\tikzset{edge/.style = {->,> = latex'}}
\tikzset{vertex/.style = {shape=circle,draw,minimum size=1.5em}}
[xscale=2, yscale=2, auto=left,every node/.style={circle,fill=blue!20}]
\node[vertex,accepting,initial] (n0) at (0,0) {$0$};
\node[vertex] (n1) at (4,0) {$3$};
\draw[edge,bend right=50] (n0) edge node[below] {$(2,8)$} (n1);
\draw[edge,bend right=10] (n0) edge node[below] {$(2,8)$} (n1);
\draw[edge, bend right=10] (n1) edge node[above] {$(7,1)$} (n0);
\draw[edge](n0) to[in=140,out=80, loop, style={min distance=7mm}] node[above] {$(8,2)$} (n0);
\draw[edge](n0) to[in=160,out=60, loop, style={min distance=30mm}] node[above] {$(8,2)$} (n0);
\draw[edge](n1) to[in=100,out=40, loop, style={min distance=7mm}] node[above] {$(1,7)$} (n1);
\end{tikzpicture}
\captionof{figure}{The multi-image union $\Delta_I=\Delta_1 \uplus \Delta_1 \uplus \Delta_2$ corresponding to the multiset union $C_I=C_1\uplus C_1 \uplus C_2$ of mother-graph cycles.}
\label{4_10_mi_union_2}
\end{center}
\end{example}

\section{Reflective symmetry of the mother graph and Hoey-Sloane graph}

We begin with a definition.

\begin{definition}\label{graph_sym}
Let $G$ be a directed graph whose vertices are base-$b$ digits, $B=\{0,1,\ldots,b-1\}$, with a collection of directed edges, $E$. Also, let $\overline{d}_1=b-1-d_1$ and $\overline{d}_2=b-1-d_2$, where $(d_1,d_2)$ is an edge of $G$. Finally, let $\overline{E}$ be the collection of directed edges $\{(\overline{d}_1,\overline{d}_2) \mid (d_1,d_2)\in E\}$. Then, the directed graph having $B$ as vertices and $\overline{E}$ as edges is called the {\it reflection} of $G$ and is denoted by $\overline{G}$. If $G$ is its own reflection, that is, if $G=\overline{G}$, then we say that $G$ is {\it symmetric}.
\end{definition}

\begin{remark}
The reader will notice that to obtain the reflection of a graph, we simply apply the reversal permutation, $\rho$, to the vertices. Then, symmetric graphs are nothing more than those for which $\rho$ is a graph automorphism.
\end{remark}

The following is a basic consequence of the above definition.

\begin{corollary}\label{un_of_refl_is_refl_of un_1}
Let $G_1$ and $G_2$ be subgraphs of a directed graph whose vertices are base-$b$ digits. Then, $\overline{G_1\cup G_2}=\overline{G}_1 \cup \overline{G}_2$.
\end{corollary}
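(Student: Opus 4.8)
The plan is to prove the identity by a direct set-theoretic argument based on the definition of reflection, since the statement is purely about how the edge-reflection operation interacts with union. The key observation is that reflection acts \emph{edgewise}: each edge $(d_1,d_2)$ is sent to the single edge $(\overline{d}_1,\overline{d}_2)=(b-1-d_1,\,b-1-d_2)$, and this assignment does not depend on which graph the edge came from. So the reflection of a graph is really just the image of its edge set under the fixed involution $r\colon(d_1,d_2)\mapsto(\overline{d}_1,\overline{d}_2)$ on ordered pairs of digits. The entire corollary then reduces to the elementary fact that taking images of sets under a fixed map commutes with union, namely $r(E_1\cup E_2)=r(E_1)\cup r(E_2)$.

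Concretely, I would proceed as follows. First I would note that all three graphs in the statement, $\overline{G_1\cup G_2}$, $\overline{G}_1$, and $\overline{G}_2$, share the same vertex set $B=\{0,1,\ldots,b-1\}$, so it suffices to show their edge sets coincide. Writing $E_1$ and $E_2$ for the edge sets of $G_1$ and $G_2$, the edge set of $G_1\cup G_2$ is $E_1\cup E_2$, so by Definition \ref{graph_sym} the edge set of $\overline{G_1\cup G_2}$ is
\[
\{(\overline{d}_1,\overline{d}_2)\mid (d_1,d_2)\in E_1\cup E_2\}.
\]
On the other side, the edge set of $\overline{G}_1\cup\overline{G}_2$ is
\[
\{(\overline{d}_1,\overline{d}_2)\mid (d_1,d_2)\in E_1\}\,\cup\,\{(\overline{d}_1,\overline{d}_2)\mid (d_1,d_2)\in E_2\}.
\]
The equality of these two sets is then immediate: an ordered pair $(\overline{d}_1,\overline{d}_2)$ lies in the first set exactly when its preimage $(d_1,d_2)$ lies in $E_1\cup E_2$, i.e.\ in $E_1$ or in $E_2$, which is exactly the condition for membership in the second set. (Because $d_1\mapsto b-1-d_1$ is a bijection on $B$, the preimage is uniquely determined, so there is no subtlety about an edge having two different origins.)

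There is essentially no main obstacle here; the result is a routine consequence of the definition, and the author has already flagged as much by labeling it a corollary of Definition \ref{graph_sym}. The only point worth being careful about is making explicit that the reflection map on edges is well-defined and applies identically regardless of source graph, so that no cross-terms or spurious edges appear when the two sides are compared. Once that is observed, the two displayed edge sets are visibly equal and the proof concludes by invoking that $\overline{G_1\cup G_2}$ and $\overline{G}_1\cup\overline{G}_2$ have identical vertex and edge sets, hence are the same directed graph.
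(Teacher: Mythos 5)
Your proof is correct and matches the paper's treatment: the paper states this corollary without proof as ``a basic consequence'' of Definition \ref{graph_sym}, and its proof of the analogous Lemma \ref{un_of_refl_is_refl_of un_2} for subgraphs of $\Gamma$ uses exactly the same element-chasing double containment you describe. Your framing of reflection as the image of the edge set under a fixed involution, which commutes with union, is the same argument made slightly more explicit.
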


Supposing that $(d_1,d_2)$ is an edge of $M$, we notice that
\[
\overline{d}_1 + (b-n)\overline{d}_2
\equiv n-1-(d_1+(b-n)d_2)\pmod{b}.
\]
Since $\lambda(d_1+(b-n)d_2)\leq n-1$ by definition, we have $\lambda(\overline{d}_1 + (b-n)\overline{d}_2)\leq n-1$. This observation yields our first new theorem.

\begin{theorem}
The $(n,b)$-mother graph is symmetric.
\end{theorem}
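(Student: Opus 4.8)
The plan is to establish the defining equality $M=\overline{M}$ at the level of edges. Since $M$ and its reflection $\overline{M}$ have the same vertex set---all base-$b$ digits---it suffices to show that their edge sets agree, that is, $E_M=\overline{E}_M$. By the remark following Definition \ref{graph_sym}, this is equivalent to checking that the reversal permutation $\rho\colon d\mapsto b-1-d$ is a graph automorphism of $M$, namely that
\[
(d_1,d_2)\in E_M \iff (\overline{d}_1,\overline{d}_2)\in E_M.
\]

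First I would carry over the congruence recorded just before the statement,
\[
\overline{d}_1+(b-n)\overline{d}_2 \equiv (n-1)-\bigl(d_1+(b-n)d_2\bigr)\pmod{b},
\]
and set $r=\lambda\bigl(d_1+(b-n)d_2\bigr)$. The key observation is that $x\mapsto (n-1)-x$ is an involution of the set $\{0,1,\ldots,n-1\}$: if $0\le r\le n-1$, then $0\le (n-1)-r\le n-1$, and since $n-1<b$ (because $1<n<b$), the quantity $(n-1)-r$ is already its own least non-negative residue modulo $b$. Hence $\lambda\bigl(\overline{d}_1+(b-n)\overline{d}_2\bigr)=(n-1)-r\le n-1$, so every edge of $M$ reflects to an edge of $M$; this gives $\overline{E}_M\subseteq E_M$, that is, $\overline{M}\subseteq M$. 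For the reverse inclusion I would use that reflection is an involution on graphs, $\overline{\overline{G}}=G$, which follows from $\overline{\overline{d}}=b-1-(b-1-d)=d$. Since applying $G\mapsto\overline{G}$ preserves containment (it merely relabels vertices by a bijection), the inclusion $\overline{M}\subseteq M$ yields $M=\overline{\overline{M}}\subseteq\overline{M}$, and the two inclusions together give $M=\overline{M}$.

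I do not expect a substantive obstacle, since the modular identity is already supplied; the one point requiring care is that the residue bound must be preserved \emph{exactly}, not merely modulo $b$. The hard part, such as it is, lies in noticing that this rests on the strict inequality $n-1<b$ from the standing hypothesis $1<n<b$, which guarantees that $(n-1)-r$ needs no further reduction. Alternatively, because the forward step is already reversible through the involution, one could derive the biconditional in a single stroke and dispense with the second inclusion; I find the two-inclusion phrasing clearer.
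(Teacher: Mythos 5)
Your proposal is correct and follows essentially the same route as the paper, which establishes the theorem via exactly the congruence $\overline{d}_1+(b-n)\overline{d}_2\equiv n-1-\bigl(d_1+(b-n)d_2\bigr)\pmod{b}$ and the observation that the residue bound is preserved. You are in fact slightly more careful than the paper's one-line argument, since you make explicit both that $(n-1)-r$ needs no further reduction modulo $b$ and that the reverse inclusion follows from reflection being an involution.
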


From the above, we may say that reflections of mother-graph cycles are again mother-graph cycles. We also draw the reader's attention to the reflective symmetry of the $(4,10)$-mother graph in Figure \ref{4_10_mg}.

Now, for an edge, $(c_1,c_2)$, on $\Gamma$ with edge label $(d_1,d_2)$, we notice that if we let $\overline{c}_1=n-1-c_1$ and $\overline{c}_2=n-1-c_2$ (treating $c_1$ and $c_2$ as base-$n$ digits), then
\begin{align*}
(n\overline{d}_2-\overline{d}_1+\overline{c}_1)\div b
&=\big(n(b-1-d_2)-(b-1-d_1)+n-1-c_1\big)\div b\\
%&=[nb-n-nd_2-b+1+d_1+n-1-c_1]\div b\\
%&=[nb-nd_2-b+d_1-c_1]\div b\\
%&=[nb-b-nd_2+d_1-c_1]\div b\\
%&=[(n-1)b-nd_2+d_1-c_1]\div b\\
%&=[(n-1)b-(nd_2-d_1+c_1)]\div b\\
&=n-1-c_2\\
&=\overline{c}_2.
\end{align*}
Since $c_2\leq n-1$, we have $\overline{c}_2 \leq n-1$. These facts give us our next result.

\begin{theorem}\label{mg_hs_reflection}
If $(c_1,c_2)$ is an edge on the $(n,b)$-Hoey-Sloane graph with edge label $(d_1,d_2)$, then $(\overline{c}_1,\overline{c}_2)$ is also an edge on the $(n,b)$-Hoey-Sloane graph with edge label $(\overline{d}_1,\overline{d}_2)$.
\end{theorem}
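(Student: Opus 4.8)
The plan is to show directly that the reflected triple $(\overline{c}_1,\overline{c}_2)$ together with the label $(\overline{d}_1,\overline{d}_2)$ satisfies every defining requirement for an edge of the Hoey-Sloane graph $\Gamma$. Recall that a pair $(c_1,c_2)$ is an edge of $\Gamma$ carrying the label $(d_1,d_2)$ exactly when three things hold: (i) $(d_1,d_2)$ is an edge of the mother graph $M$; (ii) $c_1$ and $c_2$ are legitimate states, that is, $0\leq c_1,c_2\leq n-1$; and (iii) the state-transition equation (\ref{state_transition}), namely $c_2=(nd_2-d_1+c_1)\div b$, is satisfied. I would verify each of these three conditions for the reflected data in turn.

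First, for condition (i), the reflected label $(\overline{d}_1,\overline{d}_2)$ lies in $E_M$ precisely because the mother graph is symmetric; this is exactly the content of the immediately preceding theorem, which guarantees that the reflection of any mother-graph edge is again a mother-graph edge. Second, condition (ii) for the reflected states is immediate: since $(c_1,c_2)$ is an edge of $\Gamma$, we have $0\leq c_i\leq n-1$, whence $0\leq \overline{c}_i=n-1-c_i\leq n-1$, so $\overline{c}_1$ and $\overline{c}_2$ are again legitimate states. Third, condition (iii) is furnished by the displayed computation immediately before the statement, which establishes $(n\overline{d}_2-\overline{d}_1+\overline{c}_1)\div b=\overline{c}_2$.

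The one point I expect to require genuine care, and the only real obstacle, is the integer division in step (iii). Because $(c_1,c_2)$ is a \emph{genuine} transition arising from a valid single-digit multiplication, the quantity $nd_2-d_1+c_1$ is not merely bounded but is \emph{exactly} divisible by $b$ with quotient $c_2$; this is the exact relation from Theorem \ref{digits_carries_1}, rearranged. The algebra then collapses $n\overline{d}_2-\overline{d}_1+\overline{c}_1$ to $b(n-1)-(nd_2-d_1+c_1)=b(n-1-c_2)=b\,\overline{c}_2$, which is again an exact multiple of $b$. I would emphasize this exactness explicitly, since it is what ensures the $\div$ operation returns precisely $\overline{c}_2$ and not a quotient that is off by one; without invoking the exact divisibility of the forward transition, a careless reading of floor division could suggest the wrong value.

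Assembling the three verified conditions shows that $(\overline{c}_1,\overline{c}_2)$ is an edge of $\Gamma$ bearing the label $(\overline{d}_1,\overline{d}_2)$, which is the claim. In summary, the argument is mostly bookkeeping that exploits two symmetries already in hand, the symmetry of $M$ for the labels and the symmetry of the admissible carry range $\{0,1,\ldots,n-1\}$ under $c\mapsto n-1-c$ for the states, with the transfer of exact divisibility from the original transition to its reflection being the single step that carries the real weight.
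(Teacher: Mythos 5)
Your proof is correct and follows essentially the same route as the paper, which establishes the theorem via the displayed computation $(n\overline{d}_2-\overline{d}_1+\overline{c}_1)\div b=\overline{c}_2$ together with the symmetry of $M$ and the bound $\overline{c}_2\leq n-1$. Your explicit remark that $nd_2-d_1+c_1=bc_2$ exactly (so the reflected quantity is $b\,\overline{c}_2$ on the nose and the division cannot be off by one) is a worthwhile clarification of a step the paper's computation passes over silently, but it is not a different argument.
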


Theorem \ref{mg_hs_reflection} tells us, in a sense to be made precise below, that $\Gamma$ inherits the reflective symmetry of $M$.
\begin{center}
\begin{tabular}{|c|c|}
\hline
{\bf Edge on Mother Graph} & {\bf Labeled Edge on Hoey-Sloane Graph}\\\hline
\begin{tikzpicture}
\tikzset{edge/.style = {->,> = latex'}}
\tikzset{vertex/.style = {shape=circle,draw,minimum size=1.5em}}
[xscale=2, yscale=2, auto=left,every node/.style={circle,fill=blue!20}]
\node[vertex] (n0) at (7,5) {$d_1$};
\node[vertex] (n1) at (10,5) {$d_2$};
\draw[edge] (n0) edge (n1);
\end{tikzpicture}
&
\begin{tikzpicture}
\tikzset{edge/.style = {->,> = latex'}}
\tikzset{vertex/.style = {shape=circle,draw,minimum size=1.5em}}
[xscale=2, yscale=2, auto=left,every node/.style={circle,fill=blue!20}]
\node[vertex] (n0) at (7,5) {$c_1$};
\node[vertex] (n1) at (10,5) {$c_2$};
\draw[edge] (n0) edge node[above] {$(d_1,d_2)$} (n1);
\end{tikzpicture}
\\\hline
\begin{tikzpicture}
\tikzset{edge/.style = {->,> = latex'}}
\tikzset{vertex/.style = {shape=circle,draw,minimum size=1.5em}}
[xscale=2, yscale=2, auto=left,every node/.style={circle,fill=blue!20}]
\node[vertex] (n0) at (7,5) {$\overline{d}_1$};
\node[vertex] (n1) at (10,5) {$\overline{d}_2$};
\draw[edge] (n0) edge (n1);
\end{tikzpicture}
&
\begin{tikzpicture}
\tikzset{edge/.style = {->,> = latex'}}
\tikzset{vertex/.style = {shape=circle,draw,minimum size=1.5em}}
[xscale=2, yscale=2, auto=left,every node/.style={circle,fill=blue!20}]
\node[vertex] (n0) at (7,5) {$\overline{c}_1$};
\node[vertex] (n1) at (10,5) {$\overline{c}_2$};
\draw[edge] (n0) edge node[above] {$(\overline{d}_1,\overline{d}_2)$} (n1);
\end{tikzpicture}
\\\hline
\end{tabular}
\captionof{table}{Reflective symmetry of the $(n,b)$-mother graph and the $(n,b)$-Hoey-Sloane graph.}
\label{symmetry}
\end{center}

From the above, we may say that reflections of cycle images are cycle images of reflections of mother-graph cycles. We state the above more precisely as a corollary, but first we need to take care of some details since the edge labels of the Hoey-Sloane graph make it a bit more complex than the mother graph. In particular, we define what we mean by the reflection of a subgraph of $\Gamma$.

\begin{definition}\label{hs_subgraph_reflection}
Let $\Gamma_0$ be a subgraph of $\Gamma$. We define the {\it reflection} of $\Gamma_0$, denoted by $\overline{\Gamma}_0$, to be the edge-labeled graph with the following properties:
\begin{enumerate}
\item $\overline{c}$ is a vertex of $\overline{\Gamma}_0$ whenever $c$ is a vertex of $\Gamma_0$.
\item $(\overline{c}_1,\overline{c}_2)$ is an edge of $\overline{\Gamma}_0$ whenever $(c_1,c_2)$ is an edge of $\Gamma_0$.
\item $(\overline{d}_1,\overline{d}_2)$ is an edge label of $\overline{\Gamma}_0$ whenever $(d_1,d_2)$ is an edge label of $\Gamma_0$.
\end{enumerate}
If $\Gamma_0$ is its own reflection, that is, if $\Gamma_0=\overline{\Gamma}_0$, then we say $\Gamma_0$ is {\it symmetric}.
\end{definition}

With the above, we may say more precisely what we mean by the ``reflective symmetry'' of $\Gamma$. In particular, with Definition \ref{hs_subgraph_reflection}, Theorem \ref{mg_hs_reflection} gives us the following corollaries.

\begin{corollary}
The $(n,b)$-Hoey-Sloane graph is symmetric.
\end{corollary}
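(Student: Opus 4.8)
The plan is to establish the corollary by directly verifying that $\Gamma$ satisfies the self-reflection condition of Definition \ref{hs_subgraph_reflection}, that is, that $\Gamma=\overline{\Gamma}$. Since the reflection operation on a subgraph of $\Gamma$ is specified separately on vertices, on edges, and on edge labels, it suffices to check that $\Gamma$ and $\overline{\Gamma}$ agree on all three of these features.

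First I would dispose of the vertices. The vertices of $\Gamma$ are precisely the non-negative integers less than $n$, and the reflection map $c\mapsto \overline{c}=n-1-c$ is an involution that permutes this set bijectively. Hence $\overline{\Gamma}$ has exactly the same vertex set as $\Gamma$, and condition (1) of Definition \ref{hs_subgraph_reflection} is automatic.

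The heart of the argument is matching labeled edges, and here Theorem \ref{mg_hs_reflection} does essentially all the work. By the definition of reflection, $(c_1,c_2)$ carrying the label $(d_1,d_2)$ is a labeled edge of $\overline{\Gamma}$ exactly when $(\overline{c}_1,\overline{c}_2)$ carries the label $(\overline{d}_1,\overline{d}_2)$ in $\Gamma$. I would then note that Theorem \ref{mg_hs_reflection} yields the equivalence I need: $(c_1,c_2)$ is a labeled edge of $\Gamma$ with label $(d_1,d_2)$ if and only if $(\overline{c}_1,\overline{c}_2)$ is a labeled edge of $\Gamma$ with label $(\overline{d}_1,\overline{d}_2)$. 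One direction is the statement of the theorem verbatim; the reverse follows by applying the theorem instead to the edge $(\overline{c}_1,\overline{c}_2)$ and invoking that reflection is an involution, so that $\overline{\overline{c}}=c$ and $\overline{\overline{d}}=d$. Combining the vertex agreement with this two-sided equivalence shows that a given labeled edge lies in $\Gamma$ if and only if it lies in $\overline{\Gamma}$, which is precisely $\Gamma=\overline{\Gamma}$.

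I expect no genuine obstacle here, since the modular arithmetic has already been carried out in the proof of Theorem \ref{mg_hs_reflection}. The only point requiring care is the bookkeeping around the involutive nature of reflection: the theorem as stated is one-directional, and I must use $\overline{\overline{c}}=c$ and $\overline{\overline{d}}=d$ to promote that one-directional statement into the if-and-only-if that establishes equality of the two graphs rather than a mere containment.
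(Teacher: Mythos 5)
Your proposal is correct and follows exactly the route the paper intends: the corollary is stated there without proof as an immediate consequence of Theorem \ref{mg_hs_reflection} and Definition \ref{hs_subgraph_reflection}, and your write-up simply makes that deduction explicit. Your attention to the involution $\overline{\overline{c}}=c$, $\overline{\overline{d}}=d$ to upgrade the one-directional theorem to the equality $\Gamma=\overline{\Gamma}$ (rather than a mere containment) is the right bookkeeping point and is handled correctly.
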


The reader may observe the reflective symmetry of the $(4,10)$-Hoey-Sloane graph in Figure \ref{4_10_hsg}.

\begin{corollary}\label{cycle_image_reflection}
Let $C_0$ be a cycle of $M$. Then, $\Gamma_0$ is the cycle image of $C_0$ if and only if $\overline{\Gamma}_0$ is the cycle image of  $\overline{C}_0$.
\end{corollary}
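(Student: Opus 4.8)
The plan is to reduce the corollary to a single lemma: taking cycle images commutes with reflection. Writing $\Phi(C)$ for the unique cycle image of a mother-graph cycle $C$, the entire content of the statement is the identity $\Phi(\overline{C}_0)=\overline{\Phi(C_0)}$. The biconditional then follows at once from the fact that the digit reflection $d\mapsto\overline{d}=b-1-d$ and the state reflection $c\mapsto\overline{c}=n-1-c$ are involutions, giving $\overline{\overline{C}}_0=C_0$ and $\overline{\overline{\Gamma}}_0=\Gamma_0$. Indeed, the forward implication reads off directly as $\overline{\Gamma}_0=\overline{\Phi(C_0)}=\Phi(\overline{C}_0)$, while the converse is obtained by applying the same identity with $\overline{C}_0$ in place of $C_0$: from $\overline{\Gamma}_0=\Phi(\overline{C}_0)$ one reflects both sides to get $\Gamma_0=\overline{\Phi(\overline{C}_0)}=\Phi(\overline{\overline{C}}_0)=\Phi(C_0)$.

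To establish the lemma, I would first note that $\overline{C}_0$ is again a cycle of $M$, since the mother graph is symmetric, so that $\Phi(\overline{C}_0)$ is defined. Then I would match edges together with their labels. Fix an edge $(c_1,c_2)$ of $\Gamma_0$ carrying a label $(d_1,d_2)\in C_0$, so that $c_2=(nd_2-d_1+c_1)\div b$. The computation preceding Theorem \ref{mg_hs_reflection} shows this is equivalent to $\overline{c}_2=(n\overline{d}_2-\overline{d}_1+\overline{c}_1)\div b$, which says precisely that $(\overline{c}_1,\overline{c}_2)$ is an edge of $\Phi(\overline{C}_0)$ carrying the label $(\overline{d}_1,\overline{d}_2)\in\overline{C}_0$. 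Running this equivalence over all of $C_0$ shows that the full label list $\mathscr{E}$ on the edge $(\overline{c}_1,\overline{c}_2)$ of $\Phi(\overline{C}_0)$ is exactly the collection of reflections of the labels on the edge $(c_1,c_2)$ of $\Gamma_0$; by Definition \ref{hs_subgraph_reflection} this is the label list of $\overline{\Gamma}_0$ on that same edge. In particular, nonemptiness of a label list is preserved, so an edge lies in $\Phi(\overline{C}_0)$ if and only if the reflected edge lies in $\overline{\Gamma}_0$.

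Finally, I would address the one bookkeeping subtlety, namely the convention that a state is discarded from a cycle image exactly when its indegree and outdegree are both zero. Because $c\mapsto\overline{c}$ is a bijection on states that sends each edge $(c_1,c_2)$ to $(\overline{c}_1,\overline{c}_2)$ while preserving its direction, it preserves both indegree and outdegree; hence a state $c$ survives in $\Gamma_0$ if and only if $\overline{c}$ survives in $\Phi(\overline{C}_0)$, which matches exactly the vertex set prescribed for $\overline{\Gamma}_0$ by Definition \ref{hs_subgraph_reflection}. Combining the edge-and-label match with this vertex match yields $\Phi(\overline{C}_0)=\overline{\Gamma}_0$, completing the lemma and hence the corollary.

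I expect the main obstacle to be organizational rather than mathematical. Theorem \ref{mg_hs_reflection} already supplies the crucial arithmetic equivalence for a single edge–label pair, so the real work lies in upgrading that pointwise statement into an equality of \emph{edge-labeled} graphs: verifying that entire label lists correspond, that nonemptiness (hence the existence of each edge) transfers under reflection, and that the indegree/outdegree vertex-deletion convention is respected. None of these steps is difficult, but each must be checked so that the two graphs agree as labeled graphs and not merely as underlying digraphs.
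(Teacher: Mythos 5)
Your proposal is correct and follows the same route the paper intends: the paper states this corollary without a written proof, presenting it as an immediate consequence of Theorem \ref{mg_hs_reflection} together with Definition \ref{hs_subgraph_reflection}, and your argument is precisely that derivation carried out in detail (edge--label correspondence via the reflection computation, the involution giving the converse, and the vertex-deletion convention). The only additions you make are bookkeeping checks the paper leaves implicit, so there is nothing to flag.
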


$\overline{\Gamma}_0$ also inherits the topology of $\Gamma_0$, which gives us the following.

\begin{corollary}\label{hs_topology}
Let $\Gamma_0$ be a strongly-connected subgraph of $\Gamma$. Then, $\overline{\Gamma}_0$ is a strongly-connected subgraph of $\Gamma$.
\end{corollary}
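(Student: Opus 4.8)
The plan is to establish two things: that $\overline{\Gamma}_0$ is genuinely a subgraph of $\Gamma$, and that it is strongly connected. The first is essentially already in hand. By Definition \ref{hs_subgraph_reflection}, every edge of $\overline{\Gamma}_0$ has the form $(\overline{c}_1,\overline{c}_2)$ with label $(\overline{d}_1,\overline{d}_2)$, arising from an edge $(c_1,c_2)$ of $\Gamma_0$ carrying label $(d_1,d_2)$. Since $\Gamma_0$ is a subgraph of $\Gamma$, each such $(c_1,c_2)$ is an edge of $\Gamma$, so Theorem \ref{mg_hs_reflection} guarantees that $(\overline{c}_1,\overline{c}_2)$ with label $(\overline{d}_1,\overline{d}_2)$ is again an edge of $\Gamma$. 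Hence $\overline{\Gamma}_0$ sits inside $\Gamma$.

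For the second part, the key observation is that the reflection map $c\mapsto\overline{c}=n-1-c$ is an involution on the state set, and therefore a bijection. Moreover, it is direction-preserving: by Definition \ref{hs_subgraph_reflection}, $(c_1,c_2)$ is an edge of $\Gamma_0$ precisely when $(\overline{c}_1,\overline{c}_2)$ is an edge of $\overline{\Gamma}_0$. Thus $c\mapsto\overline{c}$ furnishes an isomorphism of directed graphs from $\Gamma_0$ onto $\overline{\Gamma}_0$, and strong connectivity is an isomorphism invariant.

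To make this explicit at the level of walks, I would argue as follows. Given any two vertices $\overline{u}$ and $\overline{v}$ of $\overline{\Gamma}_0$, their preimages $u$ and $v$ are vertices of $\Gamma_0$; since $\Gamma_0$ is strongly connected, there is a directed walk $u=c^{(0)}\to c^{(1)}\to\cdots\to c^{(\ell)}=v$ in $\Gamma_0$. Applying reflection to each vertex produces $\overline{u}=\overline{c^{(0)}}\to\overline{c^{(1)}}\to\cdots\to\overline{c^{(\ell)}}=\overline{v}$, and each step is an edge of $\overline{\Gamma}_0$ by Definition \ref{hs_subgraph_reflection}. This exhibits a directed walk from $\overline{u}$ to $\overline{v}$, and since $\overline{u},\overline{v}$ were arbitrary, $\overline{\Gamma}_0$ is strongly connected.

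I do not anticipate a serious obstacle here; the substance is simply that reflection is a direction-preserving automorphism of $\Gamma$ restricting to an isomorphism $\Gamma_0\cong\overline{\Gamma}_0$. The one point requiring care is to keep the vertex relabeling $c\mapsto n-1-c$ on the state side distinct from the label relabeling $(d_1,d_2)\mapsto(\overline{d}_1,\overline{d}_2)$ on the mother-graph side, and to confirm that reflection preserves rather than reverses edge orientation, so that walks map to walks of the same direction. The latter is immediate from the form of the edge $(\overline{c}_1,\overline{c}_2)$ in Definition \ref{hs_subgraph_reflection}.
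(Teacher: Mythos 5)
Your proposal is correct and matches the paper's intent exactly: the paper offers no explicit proof, treating the corollary as an immediate consequence of Theorem \ref{mg_hs_reflection} and Definition \ref{hs_subgraph_reflection} (the remark that $\overline{\Gamma}_0$ ``inherits the topology'' of $\Gamma_0$), and your argument simply spells out the direction-preserving isomorphism $c\mapsto n-1-c$ and the induced correspondence of directed walks. No gaps.
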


The following defines some useful notation.

\begin{definition}
Suppose that $\mathscr{C}=\{C_0,\ldots,C_m\}$ is the collection of cycles of $G_C$, and suppose that $\{\Gamma_0,\ldots,\Gamma_m\}$ is the collection of corresponding cycle images of $\mathscr{C}$. We denote the union of cycle images, $\bigcup_{j=0}^{m}\Gamma_j$, by $\Gamma_C$.
\end{definition}

Letting $C$ be a permutiple class, we may now say when the reflection, $\overline{G}_C$, of $G_C$, is also the graph of a permutiple class.

\begin{theorem}\label{equiv_reflection}
Let $C$ be an $(n,b)$-permutiple class, and let $G_C$ be its graph. Then, the following statements are equivalent.
\begin{enumerate}
\item $\overline{G}_C$ is the graph of a permutiple class.
\item The zero state is a vertex of $\overline{\Gamma}_C$.
\item The state $n-1$ is a vertex of $\Gamma_C$.
\end{enumerate}
\end{theorem}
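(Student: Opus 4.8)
The plan is to prove the chain of equivalences by first dispatching $(2)\Leftrightarrow(3)$, which is purely a matter of vertex relabeling, and then establishing $(1)\Leftrightarrow(2)$ by isolating which of the conditions characterizing a permutiple-class graph are forced by reflective symmetry and which are genuinely new. For $(2)\Leftrightarrow(3)$, I would recall from Definition~\ref{hs_subgraph_reflection} that the vertices of $\overline{\Gamma}_C$ are precisely the reflections $\overline{c}=n-1-c$ of the vertices $c$ of $\Gamma_C$. Hence the zero state is a vertex of $\overline{\Gamma}_C$ if and only if some vertex $c$ of $\Gamma_C$ satisfies $n-1-c=0$, that is, $c=n-1$. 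This requires no further machinery.

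The substance is $(1)\Leftrightarrow(2)$. First I would record the structural identity $\overline{\Gamma}_C=\Gamma_{\overline{G}_C}$: since $\overline{G}_C=\bigcup_j\overline{C}_j$ by Corollary~\ref{un_of_refl_is_refl_of un_1}, and each $\overline{\Gamma}_j$ is the cycle image of $\overline{C}_j$ by Corollary~\ref{cycle_image_reflection}, the reflection of the union of cycle images of $G_C$ is exactly the union of cycle images of $\overline{G}_C$. Next I would invoke Corollary~\ref{indegree_outdegree} to characterize ``$H$ is the graph of a permutiple class,'' for a union of cycles $H$ of $M$, as the conjunction of two conditions: (i) the vertex set of $\Gamma_H$ contains the zero state (a condition on vertices alone, independent of multiplicities); and (ii) $H$ is \emph{balanceable}, meaning there exist positive multiplicities on the cycles of $H$ for which the multi-image union is strongly connected with indegree equal to outdegree at every vertex. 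The key observation is that balanceability is invariant under reflection: the map sending each vertex $c\mapsto\overline{c}$ and each edge $(c_1,c_2)\mapsto(\overline{c}_1,\overline{c}_2)$ is a graph isomorphism, so it preserves strong connectivity and the indegree--outdegree balance, while the multi-image of $\overline{C}_j$ is the reflection of that of $C_j$. Thus applying to $\{\overline{C}_j\}$ the very multiplicities that witness the balanceability of $G_C$ shows $\overline{G}_C$ is balanceable as well.

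Because $C$ is given to be a permutiple class, $G_C$ is a permutiple graph, hence balanceable, and therefore $\overline{G}_C$ is balanceable too. Consequently the only remaining requirement for $\overline{G}_C$ to be a permutiple graph is condition (i) for $H=\overline{G}_C$, namely that $\Gamma_{\overline{G}_C}=\overline{\Gamma}_C$ contains the zero state. This is exactly statement~(2), giving $(1)\Leftrightarrow(2)$ and, with $(2)\Leftrightarrow(3)$, completing the equivalences.

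I expect the main obstacle to be the careful bookkeeping separating the conditions that survive reflection for free from the single condition that does not. Concretely, I must argue cleanly that strong connectivity and degree balance transfer between a multiset union and its reflection under the same multiplicities—this rests on reflection being a vertex-relabeling isomorphism, the multi-image analogue of Corollary~\ref{cycle_image_reflection} already implicit in the paper's remark that $\Delta_j$ and $\Gamma_j$ coincide as graphs—while simultaneously showing that the presence of the zero state is the lone genuinely asymmetric condition, controlled by whether $\Gamma_C$ reaches the reflected state $n-1$.
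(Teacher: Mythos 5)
Your proof is correct, but it takes a genuinely different route from the paper's. The paper handles the only substantive implication, $({\it 3}\implies{\it 1})$, constructively: it picks an actual permutiple $p$ with $G_p=G_C$ whose $L$-walk passes through the state $n-1$ (any such $p$ must, since every edge of the cycle touching $n-1$ is used), reflects the entire walk edge-by-edge via Theorem \ref{mg_hs_reflection} to get a closed walk based at $n-1$, and then cyclically rotates it so that the occurrence of $\overline{n-1}=0$ becomes the initial state, producing an explicit permutiple $\overline{p}_{\psi^j}$ with $G_{\overline{p}_{\psi^j}}=\overline{G}_C$. That explicit construction is not incidental: it is reused verbatim as Corollary \ref{reflective_sibling_thm} and underlies the whole ``reflective sibling'' machinery of the next section. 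You instead route the argument through Corollary \ref{indegree_outdegree}, factoring ``$H$ is a permutiple-class graph'' into the zero-state condition plus a reflection-invariant balanceability condition, and transferring the latter via the vertex relabeling $c\mapsto\overline{c}$ being a multigraph isomorphism. This is a clean, more structural argument and correctly isolates the presence of the zero state as the lone asymmetric obstruction; the price is that it leans on the multigraph reflection lemma (the $\Delta_j$ analogue of Corollary \ref{cycle_image_reflection}), which the paper only introduces informally later in Example \ref{example_7}, and it does not produce the explicit digit/carry formula for the new permutiple that the paper extracts from its proof. Your $({\it 2}\iff{\it 3})$ and $({\it 1}\implies{\it 2})$ steps coincide with the paper's.
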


\begin{proof}
$({\it 1} \implies {\it 2})$.
If $\overline{G}_C$ is the graph of a permutiple class, then the zero state is trivially a vertex of $\overline{\Gamma}_C$.

$({\it 2} \implies {\it 3})$. Now suppose that the zero state is a vertex of $\overline{\Gamma}_C$. Then, by Theorem \ref{mg_hs_reflection} and Definition \ref{hs_subgraph_reflection}, the state $\overline{0}=n-1$ must be a vertex of $\Gamma_C$.

$({\it 3} \implies {\it 1})$. Suppose that the state $c_j=n-1$ is a vertex of $\Gamma_C$. Since $G_C$ is the graph of the permutiple class $C$, we may choose an $(n,b,\sigma)$-permutiple, $p=(d_k,\ldots,d_0)_b=n\cdot (d_{\sigma(k)},\ldots,d_{\sigma(0)})_b$, with carries $c_k,\ldots, c_j=n-1,\ldots, c_1$, $c_0=0$, such that $G_C=G_p$. Let $s=(d_{0},d_{\sigma(0)})\cdots (d_{k},d_{\sigma(k)})$ be the permutiple string of $p$, and let
\[
S=\{(0,c_{1}),(c_{1},c_{2}),\ldots,(c_{j-1},n-1),(n-1,c_{j+1})\ldots,(c_{k-1},c_{k}),(c_{k},0)\}
\] be the associated sequence of state transitions induced by the inputs of $s$ on the $L$-walk of $p$, which is visualized in Figure \ref{generic_L_walk}.
\begin{center}
\begin{footnotesize}
\begin{tikzpicture}
\tikzset{edge/.style = {->,> = latex'}}
\tikzset{vertex/.style = {shape=circle,draw,minimum size=1.0em}}
[xscale=2, yscale=2, auto=left,every node/.style={circle,fill=blue!20}]
\node[vertex] (n0) at (0,2.5) {$0$};
\node[vertex] (n1) at (2,2.5) {$c_1$};
\node[vertex] (n2) at (4,2.5) {$c_2$};
\draw[edge] (n0) edge node[above] {$(d_0,d_{\sigma(0)})$} (n1);
\draw[edge] (n1) edge node[above] {$(d_1,d_{\sigma(1)})$} (n2);
\node (2) at (4.65,2.5) {$\ldots$};
\node[vertex] (n0) at (5.5,2.5) {$c_{j-1}$};
\node[vertex] (n1) at (9,2.5) {$n-1$};
\node[vertex] (n2) at (11.5,2.5) {$c_{j+1}$};
\draw[edge] (n0) edge node[above] {$(d_{j-1},d_{\sigma(j-1)})$} (n1);
\draw[edge] (n1) edge node[above] {$(d_j,d_{\sigma(j)})$} (n2);
\node (3) at (12.3,2.5) {$\ldots$};
\path (3) -- node[auto=false]{} (2);
\node[vertex] (n0) at (13,2.5) {$c_k$};
\node[vertex] (n1) at (15,2.5) {$0$};
\draw[edge] (n0) edge node[above] {$(d_k,d_{\sigma(k)})$} (n1);
\end{tikzpicture}
\end{footnotesize}
\captionof{figure}{The $L$-walk on $\Gamma_C$ corresponding to $p$.}
\label{generic_L_walk}
\end{center}
We examine the reflections of the input sequence, $\overline{s}=(\overline{d}_{0},\overline{d}_{\sigma(0)})\cdots (\overline{d}_{k},\overline{d}_{\sigma(k)})$, and the state-transition sequence,
\[
\overline{S}=\{(n-1,\overline{c}_{1}),(\overline{c}_{1},\overline{c}_{2}),\ldots,(\overline{c}_{j-1},0),(0,\overline{c}_{j+1}),\ldots,(\overline{c}_{k-1},\overline{c}_{k}),(\overline{c}_{k},n-1)\},
\]
of $p$. Now, both of these describe a closed walk on $\Gamma$, pictured in Figure \ref{generic_L_walk_reflection}, but since the initial and final states are not zero, $\overline{s}$ is not a permutiple string.
\begin{center}
\begin{footnotesize}
\begin{tikzpicture}
\tikzset{edge/.style = {->,> = latex'}}
\tikzset{vertex/.style = {shape=circle,draw,minimum size=1.0em}}
[xscale=1, yscale=1, auto=left,every node/.style={circle,fill=blue!20}]
\node[vertex] (n0) at (0,0) {$n-1$};
\node[vertex] (n1) at (2.4,0) {$\overline{c}_1$};
\node[vertex] (n2) at (4.5,0) {$\overline{c}_2$};
\draw[edge] (n0) edge node[above] {$(\overline{d}_0,\overline{d}_{\sigma(0)})$} (n1);
\draw[edge] (n1) edge node[above] {$(\overline{d}_1,\overline{d}_{\sigma(1)})$} (n2);
\node (2) at (5.1,0) {$\ldots$};
\node[vertex] (n0) at (5.9,0) {$\overline{c}_{j-1}$};
\node[vertex] (n1) at (8.8,0) {$0$};
\node[vertex] (n2) at (11,0) {$\overline{c}_{j+1}$};
\draw[edge] (n0) edge node[above] {$(\overline{d}_{j-1},\overline{d}_{\sigma(j-1)})$} (n1);
\draw[edge] (n1) edge node[above] {$(\overline{d}_j,\overline{d}_{\sigma(j)})$} (n2);
\node (3) at (11.8,0) {$\ldots$};
\node[vertex] (n0) at (12.5,0) {$\overline{c}_k$};
\node[vertex] (n1) at (14.9,0) {$n-1$};
\draw[edge] (n0) edge node[above] {$(\overline{d}_k,\overline{d}_{\sigma(k)})$} (n1);
\end{tikzpicture}
\end{footnotesize}
\captionof{figure}{The reflection of the $L$-walk  of $p$ on $\overline{\Gamma}_C$ (not an $L$-walk).}
\label{generic_L_walk_reflection}
\end{center}
However, a cyclic permutation of the sequence $\overline{S}$ does yield an $L$-walk. That is, $\overline{s}_{\psi^j}=(\overline{d}_{\psi^j(0)},\overline{d}_{\sigma\psi^j(0)})\cdots (\overline{d}_{\psi^j(k)},\overline{d}_{\sigma\psi^j(k)})$, is a permutiple string with the state-transition sequence $
\overline{S}_{\psi^j}=\{(0,\overline{c}_{j+1}),\ldots,(\overline{c}_{k-1},\overline{c}_{k}),(\overline{c}_{k},n-1),(n-1,\overline{c}_{1}),(\overline{c}_{1},\overline{c}_{2}),\ldots,(\overline{c}_{j-1},0)\}.$ We write $\overline{p}_{\psi^j}$ to denote the associated permutiple, $(\overline{d}_{\psi^j(k)},\ldots, \overline{d}_{\psi^j(0)})_b=n\cdot(\overline{d}_{\sigma\psi^j(k)},\ldots, \overline{d}_{\sigma\psi^j(0)})_b$. Now, since the inputs in the string $s$ make up the edges of $G_C$, we know that the collection of inputs of $\overline{s}_{\psi^j}$ make up the edges of $\overline{G}_C$. This is to say that $\overline{G}_C$ is the graph of a permutiple class since $\overline{G}_C=G_{\overline{p}_{\psi^j}}$.
\end{proof}

\begin{example}\label{example_1}
Consider the $(4,10)$-permutiple $p=(8,6,7,1,2)_{10}=4\cdot (2,1,6,7,8)_{10}$ and the class, $C$, with graph $G_C=G_p$. Figure \ref{reflection_example} shows $G_C$ and its reflection, $\overline{G}_C$.

\begin{center}
\begin{tikzpicture}
\tikzset{edge/.style = {->,> = latex'}}
\tikzset{vertex/.style = {shape=circle,draw,minimum size=1.5em}}
[xscale=2, yscale=2, auto=left,every node/.style={circle,fill=blue!20}]
\node[vertex] (n0) at (6,5) {$0$};
\node[vertex] (n1) at (5.618,6.17557) {$1$};
\node[vertex] (n2) at (4.618,6.90211) {$2$};
\node[vertex] (n3) at (3.38197,6.90211) {$3$};
\node[vertex] (n4) at (2.38197,6.17557) {$4$};
\node[vertex] (n5) at (2,5.00001) {$5$};
\node[vertex] (n6) at (2.38196,3.82443) {$6$};
\node[vertex] (n7) at (3.38196,3.09789) {$7$};
\node[vertex] (n8) at (4.618,3.09788) {$8$};
\node[vertex] (n9) at (5.618,3.82442) {$9$};
\draw[edge, bend right=5] (n8) to (n2);
\draw[edge, bend right=0] (n6) to (n1);
\draw[edge, bend right=0] (n7) to (n6);
\draw[edge, bend right=0] (n1) to (n7);
\draw[edge, bend right=5] (n2) to (n8);
\tikzset{edge/.style = {->,> = latex'}}
\tikzset{vertex/.style = {shape=circle,draw,minimum size=1.5em}}
[xscale=2, yscale=2, auto=left,every node/.style={circle,fill=blue!20}]
\node[vertex] (n0) at (12,5) {$0$};
\node[vertex] (n1) at (11.618,6.17557) {$1$};
\node[vertex] (n2) at (10.618,6.90211) {$2$};
\node[vertex] (n3) at (9.38197,6.90211) {$3$};
\node[vertex] (n4) at (8.38197,6.17557) {$4$};
\node[vertex] (n5) at (8,5.00001) {$5$};
\node[vertex] (n6) at (8.38196,3.82443) {$6$};
\node[vertex] (n7) at (9.38196,3.09789) {$7$};
\node[vertex] (n8) at (10.618,3.09788) {$8$};
\node[vertex] (n9) at (11.618,3.82442) {$9$};
\draw[edge, bend right=0] (n8) to (n2);
\draw[edge, bend right=5] (n7) to (n1);
\draw[edge, bend right=5] (n1) to (n7);
\draw[edge, bend right=0] (n3) to (n8);
\draw[edge, bend right=0] (n2) to (n3);
\end{tikzpicture}
\captionof{figure}{The permutiple graph $G_C$ (left) and its reflection $\overline{G}_C$ (right).}
\label{reflection_example}
\end{center}

We examine the correspondence between the cycles of $G_C$ and their cycle images as seen in Table \ref{reflection_example_2}. Since $\Gamma_C$ contains the vertex $n-1=3$, we may apply Theorem \ref{equiv_reflection} and conclude that $\overline{G}_C$ is the graph of some permutiple class. We may confirm this by examining the cycles of $\overline{G}_C$ and their corresponding cycle images shown in Table \ref{reflection_example_3}.

\begin{center}
\begin{tabular}{|c|c|l|c|}
\hline
&
{\bf Cycle of $G_C$} & {\bf Cycle Image}
&
\\\hline
$C_0$
&
\begin{tikzpicture}
\tikzset{edge/.style = {->,> = latex'}}
\tikzset{vertex/.style = {shape=circle,draw,minimum size=1.5em}}
[xscale=2, yscale=2, auto=left,every node/.style={circle,fill=blue!20}]
\node[vertex] (n1) at (4.5,5) {$1$};
\node[vertex] (n6) at (2.38196,3.82443) {$6$};
\node[vertex] (n7) at (3.38196,3.09789) {$7$};
\draw[edge, bend right=0] (n6) to (n1);
\draw[edge, bend right=0] (n7) to (n6);
\draw[edge, bend right=0] (n1) to (n7);
\end{tikzpicture}
&
\begin{tikzpicture}
\tikzset{edge/.style = {->,> = latex'}}
\tikzset{vertex/.style = {shape=circle,draw,minimum size=1.5em}}
[xscale=2, yscale=2, auto=left,every node/.style={circle,fill=blue!20}]
\node[vertex,initial,accepting] (n0) at (0,5) {$0$};
\node[vertex] (n2) at (3,5) {$2$};
\node[vertex] (n3) at (4.5,5) {$3$};
\draw[edge](n3) to[in=90,out=30, loop, style={min distance=10mm}] node[above] {\footnotesize{$(1,7)$}} (n3);
\draw[edge, bend right=10](n3) edge node[above] {\footnotesize{$(7,6)$}} (n2);
\draw[edge, bend right=10](n2) edge node[above] {\footnotesize{$(6,1)$}} (n0);
\end{tikzpicture}
&
$\Gamma_0$
\\\hline
$C_1$
&
\begin{tikzpicture}
\tikzset{edge/.style = {->,> = latex'}}
\tikzset{vertex/.style = {shape=circle,draw,minimum size=1.5em}}
[xscale=2, yscale=2, auto=left,every node/.style={circle,fill=blue!20}]
\node[vertex] (n2) at (4,5) {$2$};
\node[vertex] (n8) at (4,3) {$8$};
\draw[edge, bend right=10] (n8) to (n2);
\draw[edge, bend right=10] (n2) to (n8);
\end{tikzpicture}
&
\begin{tikzpicture}
\tikzset{edge/.style = {->,> = latex'}}
\tikzset{vertex/.style = {shape=circle,draw,minimum size=1.5em}}
[xscale=2, yscale=2, auto=left,every node/.style={circle,fill=blue!20}]
\node[vertex,initial,accepting] (n0) at (0,5) {$0$};
\node[vertex] (n3) at (4.5,5) {$3$};
 \draw[edge](n0) to[in=150,out=90, loop, style={min distance=10mm}] node[above] {\footnotesize{$(8,2)$}} (n0);
\draw[edge, bend left=20](n0) edge node[above] {\footnotesize{$(2,8)$}} (n3);
\end{tikzpicture}
&
$\Gamma_1$
\\\hline
\end{tabular}
\captionof{table}{The cycles of $G_C$ and their corresponding cycle images.}
\label{reflection_example_2}
\end{center}
\begin{center}
\begin{tabular}{|c|c|l|c|}
\hline
&
{\bf Cycle of $\overline{G}_C$}
&
{\bf Cycle Image}
&
\\\hline
$\overline{C}_0$
&
\begin{tikzpicture}
\tikzset{edge/.style = {->,> = latex'}}
\tikzset{vertex/.style = {shape=circle,draw,minimum size=1.5em}}
[xscale=2, yscale=2, auto=left,every node/.style={circle,fill=blue!20}]
\node[vertex] (n2) at (10.618,6.90211) {$2$};
\node[vertex] (n3) at (9.38197,6.90211) {$3$};
\node[vertex] (n8) at (10.618,5) {$8$};
\draw[edge, bend right=0] (n8) to (n2);
\draw[edge, bend right=0] (n3) to (n8);
\draw[edge, bend right=0] (n2) to (n3);
\end{tikzpicture}
&
\begin{tikzpicture}
\tikzset{edge/.style = {->,> = latex'}}
\tikzset{vertex/.style = {shape=circle,draw,minimum size=1.5em}}
[xscale=2, yscale=2, auto=left,every node/.style={circle,fill=blue!20}]
\node[vertex] (n0) at (4.5,5) {$3$};
\node[vertex] (n2) at (1.5,5) {$1$};
\node[vertex,initial,accepting] (n3) at (0,5) {$0$};
\draw[edge](n3) to[in=150,out=90, loop, style={min distance=10mm}] node[above] {\footnotesize{$(8,2)$}} (n3);
\draw[edge, bend left=10](n3) edge node[above] {\footnotesize{$(2,3)$}} (n2);
\draw[edge, bend left=10](n2) edge node[above] {\footnotesize{$(3,8)$}} (n0);
\end{tikzpicture}
&
$\overline{\Gamma}_0$
\\\hline
$\overline{C}_1$
&
\begin{tikzpicture}
\tikzset{edge/.style = {->,> = latex'}}
\tikzset{vertex/.style = {shape=circle,draw,minimum size=1.5em}}
[xscale=2, yscale=2, auto=left,every node/.style={circle,fill=blue!20}]
\node[vertex] (n1) at (11,5) {$1$};
\node[vertex] (n7) at (9.38196,3.09789) {$7$};
\draw[edge, bend right=10] (n7) to (n1);
\draw[edge, bend right=10] (n1) to (n7);
\end{tikzpicture}
&
\begin{tikzpicture}
\tikzset{edge/.style = {->,> = latex'}}
\tikzset{vertex/.style = {shape=circle,draw,minimum size=1.5em}}
[xscale=2, yscale=2, auto=left,every node/.style={circle,fill=blue!20}]
\node[vertex] (n0) at (4.5,5) {$3$};
\node[vertex,initial,accepting] (n3) at (0,5) {$0$};
\draw[edge](n0) to[in=90,out=30, loop, style={min distance=10mm}] node[above] {\footnotesize{$(1,7)$}} (n0);
\draw[edge, bend right=20](n0) edge node[above] {\footnotesize{$(7,1)$}} (n3);
\end{tikzpicture}
&
$\overline{\Gamma}_1$
\\\hline
\end{tabular}
\captionof{table}{The cycles of $\overline{G}_C$ and their corresponding cycle images.}
\label{reflection_example_3}
\end{center}

The resulting cycle-image union, $\overline{\Gamma}_C=\overline{\Gamma}_0 \cup \overline{\Gamma}_1$, forms a strongly-connected graph (as guaranteed by Corollary \ref{hs_topology}) containing the zero state. From this graph, we may easily construct permutiple strings, such as $(2,3)(3,8)(1,7)(7,1)(8,2)$, which corresponds to the $(4,10)$-permutiple $(8,7,1,3,2)_{10}=4\cdot (2,1,7,8,3)_{10}$.
\end{example}

\section{Symmetric classes}

\begin{definition}
Let $C$ be an $(n,b)$-permutiple class, and let $G_C$ be its graph. If the state $n-1$ is a vertex of $\Gamma_C$, we call the $(n,b)$-permutiple class with graph $\overline{G}_C$, guaranteed by Theorem \ref{equiv_reflection}, the {\it reflection} of $C$, and denote it by $\overline{C}$. If $C=\overline{C}$, then we say that $C$ is a {\it symmetric class}. The {\it symmetric closure} of $C$, denoted by $\widehat{C}$, is the permutiple class with graph $G_C \cup \overline{G}_C$.
\end{definition}

\begin{lemma}\label{un_of_refl_is_refl_of un_2}
Let $\Gamma_0$ and $\Gamma_1$ be subgraphs of $\Gamma$. Then, $\overline{\Gamma_0\cup \Gamma_1}=\overline{\Gamma}_0 \cup \overline{\Gamma}_1$.
\end{lemma}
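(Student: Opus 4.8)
The plan is to prove the set equality $\overline{\Gamma_0\cup\Gamma_1}=\overline{\Gamma}_0\cup\overline{\Gamma}_1$ by comparing vertices, edges, and edge-labels separately, since by Definition \ref{hs_subgraph_reflection} a subgraph of $\Gamma$ is an edge-labeled graph and its reflection is determined by these three pieces of data. The underlying principle is exactly that of Corollary \ref{un_of_refl_is_refl_of un_1}: reflection is applied pointwise (via the involution $c\mapsto\overline{c}=n-1-c$ on states and $(d_1,d_2)\mapsto(\overline{d}_1,\overline{d}_2)$ on labels), and pointwise operations commute with set union. So the heart of the argument is just that the bar operation distributes over $\cup$ at each of the three levels.

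First I would handle the vertices. By property (1) of Definition \ref{hs_subgraph_reflection}, $\overline{c}$ is a vertex of $\overline{\Gamma_0\cup\Gamma_1}$ exactly when $c$ is a vertex of $\Gamma_0\cup\Gamma_1$, i.e. when $c$ is a vertex of $\Gamma_0$ or of $\Gamma_1$; by the same property applied to each summand, this holds exactly when $\overline{c}$ is a vertex of $\overline{\Gamma}_0$ or of $\overline{\Gamma}_1$, hence of $\overline{\Gamma}_0\cup\overline{\Gamma}_1$. Since $c\mapsto\overline{c}$ is a bijection on the base-$n$ digits, this gives equality of vertex sets. Next I would repeat this verbatim for edges using property (2): $(\overline{c}_1,\overline{c}_2)$ is an edge of $\overline{\Gamma_0\cup\Gamma_1}$ iff $(c_1,c_2)$ is an edge of $\Gamma_0$ or $\Gamma_1$ iff $(\overline{c}_1,\overline{c}_2)$ is an edge of $\overline{\Gamma}_0$ or $\overline{\Gamma}_1$. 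Finally I would do the same for edge-labels using property (3), noting that the label map $(d_1,d_2)\mapsto(\overline{d}_1,\overline{d}_2)$ is likewise a bijection on mother-graph edges.

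Concluding, having matched the vertex sets, the edge sets, and the edge-label assignments of $\overline{\Gamma_0\cup\Gamma_1}$ and $\overline{\Gamma}_0\cup\overline{\Gamma}_1$, the two edge-labeled graphs are identical, which is the claim. I do not expect any genuine obstacle here; the result is the Hoey-Sloane-graph analogue of Corollary \ref{un_of_refl_is_refl_of un_1}, and the only thing requiring care is that $\Gamma$ carries edge labels, so one must check all three defining properties of Definition \ref{hs_subgraph_reflection} rather than just vertices and edges. The mild bookkeeping point worth stating explicitly is that because $c\mapsto n-1-c$ is an involution, each of the three equivalences runs in both directions, so the argument yields equality rather than mere containment.
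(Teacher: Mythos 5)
Your proposal is correct and follows essentially the same route as the paper's proof: both arguments treat vertices, edges, and edge labels separately and use the fact that the pointwise reflection (an involution) distributes over set union, with the paper proving one containment and noting the other is symmetric where you phrase each step as a biconditional. No meaningful difference in substance.
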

\begin{proof}
Let $c$ be a vertex of $\overline{\Gamma_0\cup \Gamma_1}$. Then, $\overline{c}$ is a vertex of $\Gamma_0\cup \Gamma_1$. Thus, $\overline{c}$ is a vertex of $\Gamma_0$ or $\Gamma_1$, from which it follows that $c$ is a vertex of $\overline{\Gamma}_0$ or  $\overline{\Gamma}_1$. The other containment is proved similarly. Moreover, the above argument is identical in form for edges and edge labels.
\end{proof}

\begin{corollary}\label{un_of_refl_is_refl_of un_3}
Let $\Gamma_0,\ldots, \Gamma_m$ be subgraphs of $\Gamma$. Then, $\displaystyle\overline{\bigcup_{j=0}^{m}\Gamma_j}=\bigcup_{j=0}^{m}\overline{\Gamma}_j$.
\end{corollary}

\begin{theorem}\label{symm_equiv}
Let $C$ be an $(n,b)$-permutiple class, and let $G_C$ be its graph. Also, suppose that the state $n-1$ is a vertex of $\Gamma_C$.
Then, the symmetric closure of $C$ is a symmetric class. Also, all of the following are equivalent:
\begin{enumerate}
\item $C$ is a symmetric class.
\item $C$ is the symmetric closure of itself.
\item $\Gamma_C$ is a symmetric subgraph of the $(n,b)$-Hoey-Sloane graph.
\item $G_C$ is a symmetric graph.
\end{enumerate}
\end{theorem}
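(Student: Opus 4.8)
The plan is to reduce every assertion to two structural facts together with one key identity, and then prove the equivalences by pivoting all of them on statement $({\it 4})$. The two structural facts are: (a) by Definition \ref{perm_class} a permutiple class is literally the set of all permutiples whose graph is a subgraph of its class graph, so two classes coincide precisely when their graphs coincide; and (b) the set of edge labels occurring in $\Gamma_C$ is exactly the edge set of $G_C$ (each edge of $G_C$ lies in some cycle $C_j$ and reappears as a label in the cycle image $\Gamma_j$, and conversely), so the assignment $G_C \mapsto \Gamma_C$ is injective. The key identity is $\overline{\Gamma}_C = \Gamma_{\overline C}$, which I would obtain by noting that the cycles of $\overline G_C$ are exactly the reflections of the cycles of $G_C$, applying Corollary \ref{cycle_image_reflection} to identify the cycle image of each $\overline C_j$ as $\overline \Gamma_j$, and then invoking Corollary \ref{un_of_refl_is_refl_of un_3} to pass the reflection through the union.

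First I would dispose of the symmetric-closure claim. Existence of $\widehat C$ as a genuine permutiple class requires showing that $G_C \cup \overline G_C$ is actually a permutiple graph; this is the one place with real content, since by Example \ref{example_000} not every union of mother-graph cycles arises from a permutiple. Here I would use the hypothesis that $n-1$ is a vertex of $\Gamma_C$: by Theorem \ref{equiv_reflection} the class $\overline C$ exists, so there are permutiple strings $s$ and $s'$ realizing $C$ and $\overline C$ respectively, each a closed $L$-walk based at the zero state. Their concatenation $ss'$ is again a closed walk at the zero state, hence in $L$, and its left- and right-hand digit collections are unions of two matching multisets and so again match; thus $ss'$ is a permutiple string whose associated permutiple has graph $G_C \cup \overline G_C$. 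Symmetry of the closure is then immediate: by Corollary \ref{un_of_refl_is_refl_of un_1} and the involution $\overline{\overline d}=d$ we get $\overline{G_{\widehat C}}=\overline{G_C \cup \overline G_C}=\overline G_C \cup G_C = G_{\widehat C}$, and fact (a) upgrades this graph equality to $\widehat C = \overline{\widehat C}$.

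For the equivalences I would prove $({\it 1})\Leftrightarrow({\it 4})$, $({\it 2})\Leftrightarrow({\it 4})$, and $({\it 3})\Leftrightarrow({\it 4})$. The first is fact (a) verbatim: $C=\overline C$ iff $G_C = G_{\overline C}=\overline G_C$. For $({\it 2})\Leftrightarrow({\it 4})$, observe that $C=\widehat C$ iff $G_C = G_C \cup \overline G_C$ iff $\overline G_C \subseteq G_C$; since reflection is an involution on edges it preserves edge count, so $|E(\overline G_C)|=|E(G_C)|$ and the containment forces equality, which is $({\it 4})$. For $({\it 3})\Leftrightarrow({\it 4})$, I would combine the key identity with facts (a) and (b): if $G_C$ is symmetric then $C=\overline C$ by (a), whence $\Gamma_C = \Gamma_{\overline C}=\overline \Gamma_C$; conversely if $\Gamma_C = \overline \Gamma_C = \Gamma_{\overline C}$, then injectivity (b) gives $G_C = G_{\overline C}=\overline G_C$.

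I expect the main obstacle to be the existence half of the symmetric-closure statement, i.e.\ producing an honest permutiple with graph $G_C \cup \overline G_C$; the concatenation-of-strings construction is the crux, and it is exactly where the hypothesis on the state $n-1$ (equivalently, the existence of $\overline C$ from Theorem \ref{equiv_reflection}) is consumed. Everything after that is bookkeeping: the equivalences are formal consequences of the two structural facts and the single identity $\overline{\Gamma}_C = \Gamma_{\overline C}$, with the only mild subtlety being the cardinality argument that turns the containment $\overline G_C \subseteq G_C$ into equality.
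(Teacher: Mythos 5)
Your proposal is correct and follows essentially the same route as the paper: both prove the three equivalences by pivoting everything on item (4), using the identity $\overline{\Gamma}_C=\Gamma_{\overline{C}}$ (via the cycle-image reflection and union corollaries) for (3)$\Leftrightarrow$(4), the determination of a class by its graph for (1)$\Leftrightarrow$(4), and the fact that reflection is an involution to turn $G_C=G_C\cup\overline{G}_C$ into $G_C=\overline{G}_C$ for (2)$\Leftrightarrow$(4) (the paper reflects both sides where you count edges, an immaterial difference). The one genuine addition is your concatenation-of-permutiple-strings argument showing that $G_C\cup\overline{G}_C$ is actually realized by a permutiple, a well-definedness point the paper's proof silently absorbs into the definition of the symmetric closure; your argument for it is sound and is where the hypothesis on the state $n-1$ is used.
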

\begin{proof}
By Corollary \ref{un_of_refl_is_refl_of un_1}, the reflection of $G_{\widehat{C}}=G_C\cup \overline{G}_C$ is itself. Thus,  $\widehat{C}$ is its own reflection. By definition, we conclude that $\widehat{C}$ is symmetric. To demonstrate the equivalence, we show that items {\it 1}, {\it 2}, and {\it 3} are all equivalent to item {\it 4}.

$({\it 1} \iff {\it 4})$. The permutiple class $C$ has graph $G_C$, and, by definition, the permutiple class $\overline{C}$ has graph $\overline{G}_C$. From this, the equivalence between the statements $C=\overline{C}$ and $G_C=\overline{G}_C$ is clear.

$({\it 2} \iff {\it 4})$. If $C=\widehat{C}$, then $G_C=G_{\widehat{C}}=G_C\cup \overline{G}_C$. By Corollary \ref{un_of_refl_is_refl_of un_1}, we then have that $\overline{G}_C=\overline{G_C\cup \overline{G}_C}=\overline{G}_C\cup G_C=G_C$. Conversely, if $G_C$ is a symmetric graph, then $G_C=\overline{G}_C=G_C \cup \overline{G}_C$. Thus, $C$ and its symmetric closure have the same graph, from which it follows, by definition, that $C$ is its own symmetric closure.

$({\it 3} \iff {\it 4})$. Suppose $G_C$ is a symmetric graph, and let $\mathscr{C}=\{C_0,\ldots,C_m\}$ be the collection of cycles of $G_C$, and let $\overline{\mathscr{C}}=\{\overline{C}_0,\ldots,\overline{C}_m\}$ be the collection of cycles of $\overline{G}_C$. Also, let $\mathscr{I}=\{\Gamma_0,\ldots,\Gamma_m\}$ be the collection of cycle images of $\mathscr{C}$, and let $\overline{\mathscr{I}}=\{\overline{\Gamma}_0,\ldots,\overline{\Gamma}_m\}$ be the collection of cycle images of $\overline{\mathscr{C}}$. Since $G_C=\overline{G}_C$, it follows that $\mathscr{C}=\overline{\mathscr{C}}$, from which we have $\mathscr{I}=\overline{\mathscr{I}}$. Therefore, by Corollary \ref{un_of_refl_is_refl_of un_3}, the union of cycle images of $G_C$, that is, $\Gamma_C=\bigcup_{j=0}^{m}\Gamma_j$, is a symmetric subgraph of $\Gamma$. Supposing now that $\Gamma_C$ is symmetric, let $(d_1,d_2)$ be an edge of $G_C$. Then, $(d_1,d_2)$ is an edge label of $\Gamma_C$. By our assumption, we may say that $(\overline{d}_1,\overline{d}_2)$ is also an edge label of $\Gamma_C$, which is only possible if $(\overline{d}_1,\overline{d}_2)$ is an edge of $G_C$. It follows that $(d_1,d_2)$ is also an edge of $\overline{G}_C$. A similar argument shows that the edges of $\overline{G}_C$ are contained in the edges of $G_C$. Therefore, we may conclude that $G_C=\overline{G}_C$.
\end{proof}

\begin{example}\label{example_2}
We again consider the $(4,10)$-permutiple $p=(8,6,7,1,2)_{10}=4\cdot (2,1,6,7,8)_{10}$ and the class $C$ with graph $G_C=G_p$, which is depicted in Figure \ref{reflection_example}. The graph, $G_{\widehat{C}}$, of the symmetric closure, $\widehat{C}$, is seen in Figure \ref{symm_closure_example} and is a symmetric graph as guaranteed by Theorem \ref{symm_equiv}.
\begin{center}
\begin{tikzpicture}
\tikzset{edge/.style = {->,> = latex'}}
\tikzset{vertex/.style = {shape=circle,draw,minimum size=1.5em}}
[xscale=2, yscale=2, auto=left,every node/.style={circle,fill=blue!20}]
\node[vertex] (n0) at (6,5) {$0$};
\node[vertex] (n1) at (5.618,6.17557) {$1$};
\node[vertex] (n2) at (4.618,6.90211) {$2$};
\node[vertex] (n3) at (3.38197,6.90211) {$3$};
\node[vertex] (n4) at (2.38197,6.17557) {$4$};
\node[vertex] (n5) at (2,5.00001) {$5$};
\node[vertex] (n6) at (2.38196,3.82443) {$6$};
\node[vertex] (n7) at (3.38196,3.09789) {$7$};
\node[vertex] (n8) at (4.618,3.09788) {$8$};
\node[vertex] (n9) at (5.618,3.82442) {$9$};
\draw[edge, bend right=5] (n8) to (n2);
\draw[edge, bend right=0] (n6) to (n1);
\draw[edge, bend right=0] (n7) to (n6);
\draw[edge, bend right=5] (n2) to (n8);
\draw[edge, bend right=5] (n7) to (n1);
\draw[edge, bend right=5] (n1) to (n7);
\draw[edge, bend right=0] (n3) to (n8);
\draw[edge, bend right=0] (n2) to (n3);
\end{tikzpicture}
\captionof{figure}{The graph, $G_{\widehat{C}}$, of the symmetric closure, $\widehat{C}$, of $C$.}
\label{symm_closure_example}
\end{center}

The cycle images of $\widehat{C}$ are $\Gamma_0$,  $\Gamma_1$,  $\overline{\Gamma}_0$, and $\overline{\Gamma}_1$, given in Example \ref{example_1}. The union of these, $\Gamma_{\widehat{C}}$, is shown in Figure \ref{symm_closure_example_hsg}. We note that the configuration of the vertices and edges is to make the literal reflective symmetry more obvious.

\begin{center}
\begin{tikzpicture}
\tikzset{edge/.style = {->,> = latex'}}
\tikzset{vertex/.style = {shape=circle,draw,minimum size=1.5em}}
[xscale=2, yscale=2, auto=left,every node/.style={circle,fill=blue!20}]
\node[vertex,initial,accepting] (n0) at (0,5) {$0$};
\node[vertex] (n1) at (2,5) {$1$};
\node[vertex] (n2) at (4,5) {$2$};
\node[vertex] (n3) at (6,5) {$3$};
\draw[edge](n0) to[in=150,out=90, loop, style={min distance=10mm}] node[above] {\footnotesize{$(8,2)$}} (n0);
\draw[edge](n3) to[in=90,out=30, loop, style={min distance=10mm}] node[above] {\footnotesize{$(1,7)$}} (n3);
\draw[edge, bend right=0](n3) edge node[below] {\footnotesize{$(7,6)$}} (n2);
\draw[edge, bend right=25](n2) edge node[above] {\footnotesize{$(6,1)$}} (n0);
\draw[edge, bend left=80](n0) edge node[above] {\footnotesize{$(2,8)$}} (n3);
\draw[edge, bend right=0](n0) edge node[below] {\footnotesize{$(2,3)$}} (n1);
\draw[edge, bend left=25](n1) edge node[above] {\footnotesize{$(3,8)$}} (n3);
\draw[edge, bend right=60](n3) edge node[below] {\footnotesize{$(7,1)$}} (n0);
\end{tikzpicture}
\captionof{figure}{The cycle-image union, $\Gamma_{\widehat{C}}=\Gamma_0 \cup \Gamma_1 \cup \overline{\Gamma}_0 \cup \overline{\Gamma}_1$, of the cycles of $G_{\widehat{C}}$.}
\label{symm_closure_example_hsg}
\end{center}

We see that both $(8,6,7,1,2)_{10}=4\cdot (2,1,6,7,8)_{10}$ and $(8,7,1,3,2)_{10}=4\cdot (2,1,7,8,3)_{10}$, given in Example \ref{example_1}, are members of the symmetric closure of $C$.
\end{example}

\section{New permutiples from old}

We begin this section with a corollary to the argument in the proof of Theorem \ref{equiv_reflection}.

\begin{corollary}\label{reflective_sibling_thm}
Let $(d_k,\ldots, d_0)_b=n\cdot(d_{\sigma(k)},\ldots, d_{\sigma(0)})_b$ be an $(n,b,\sigma)$-permutiple with carry sequence $c_k,\ldots,c_1,c_0=0$. Then, for every $0 < j \leq k$ such that $c_j=n-1$, we have that $(\overline{d}_{\psi^j(k)},\ldots, \overline{d}_{\psi^j(0)})_b=n\cdot(\overline{d}_{\sigma\psi^j(k)},\ldots, \overline{d}_{\sigma\psi^j(0)})_b$ is an $(n,b,\psi^{-j}\sigma\psi^j)$-permutiple with carry sequence $\overline{c}_{\psi^j(k)},\ldots,\overline{c}_{\psi^j(1)},\overline{c}_{\psi^j(0)}=0$.
\end{corollary}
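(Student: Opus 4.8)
The plan is to recognize this statement as a direct corollary of the construction already carried out in the proof of Theorem \ref{equiv_reflection}, now specialized to a single permutiple and to a single index $j$ at which the carry attains the value $n-1$. That proof starts from an $(n,b,\sigma)$-permutiple $p$, reflects its permutiple string to $\overline{s}=(\overline{d}_{0},\overline{d}_{\sigma(0)})\cdots(\overline{d}_{k},\overline{d}_{\sigma(k)})$, and cyclically shifts by $\psi^{j}$ to obtain the permutiple string $\overline{s}_{\psi^{j}}$ whose associated permutiple is exactly $(\overline{d}_{\psi^j(k)},\ldots,\overline{d}_{\psi^j(0)})_b=n\cdot(\overline{d}_{\sigma\psi^j(k)},\ldots,\overline{d}_{\sigma\psi^j(0)})_b$. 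Thus the substantive work is done; what remains is to read off the two new assertions, namely the digit permutation $\psi^{-j}\sigma\psi^{j}$ and the carry sequence. First I would invoke Theorem \ref{mg_hs_reflection} to record that the reflected inputs trace a closed walk on $\Gamma$ whose state at position $i$ is $\overline{c}_i=n-1-c_i$; since $c_j=n-1$, we get $\overline{c}_j=0$, which is precisely why cutting and re-splicing the closed walk at position $j$ (that is, shifting by $\psi^{j}$) produces a walk beginning and ending at the zero state.

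For the digit-permutation claim I would argue entirely at the level of index permutations, so that no hypothesis on the digits being distinct is needed. Writing the new product number so that position $i$ carries the digit $\overline{d}_{\psi^j(i)}$, the corollary asserts that the multiplicand carries $\overline{d}_{\sigma\psi^j(i)}$ in position $i$. Setting $\tau=\psi^{-j}\sigma\psi^{j}$ and using $\psi^{j}(\tau(i))=\sigma\psi^{j}(i)$, the multiplicand's position-$i$ digit equals the product's position-$\tau(i)$ digit, which is exactly the defining relation of an $(n,b,\tau)$-permutiple. Hence the digit permutation is $\psi^{-j}\sigma\psi^{j}$, as an identity of permutations on $\{0,1,\ldots,k\}$ independent of the actual digit values.

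For the carry-sequence claim the cleanest route is a direct verification against Equation~(\ref{state_transition}). I would substitute the candidate carries $c'_i=\overline{c}_{\psi^j(i)}$ and the permutation $\sigma'=\psi^{-j}\sigma\psi^{j}$ into the state-transition recursion, so that $c'_{i+1}=\bigl(n\,\overline{d}_{\sigma\psi^j(i)}-\overline{d}_{\psi^j(i)}+\overline{c}_{\psi^j(i)}\bigr)\div b$. Writing $m=\psi^{j}(i)$ and applying the reflection identity established in the computation preceding Theorem \ref{mg_hs_reflection} (which says $(n\overline{d}_2-\overline{d}_1+\overline{c}_1)\div b=\overline{c}_2$ whenever $(d_1,d_2)$ drives the transition $c_1\to c_2$), this collapses to $\overline{c}_{m+1}=\overline{c}_{\psi^j(i+1)}$, reproducing the claimed recursion with indices read modulo $k+1$ and with $\overline{c}_0=\overline{c}_{k+1}=n-1$. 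The base case $c'_0=\overline{c}_{\psi^j(0)}=\overline{c}_j=0$ confirms the initial carry. Finally I would note that $\overline{s}_{\psi^j}$ really is a permutiple string: digit reflection is a bijection on $\{0,\ldots,b-1\}$ and cyclic shifting preserves multisets, so the left- and right-component multisets of $\overline{s}_{\psi^j}$ coincide, and the shifted walk returns to the zero state.

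I expect the only genuine obstacle to be index bookkeeping: tracking $\psi^{j}$ simultaneously through the digits, through $\sigma$, and through the carries, and in particular confirming that the modular wrap-around at position $k+1$ sends $\overline{c}_{k+1}$ to $n-1=\overline{c}_0$ in a way consistent with the recursion. Once the reduction $\psi^{j}(i)+1\equiv\psi^{j}(i+1)\pmod{k+1}$ is used, everything follows mechanically from Theorem \ref{mg_hs_reflection} and Equation~(\ref{state_transition}), with no estimate or case analysis required.
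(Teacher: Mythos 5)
Your proposal is correct and follows the paper's own route: the paper offers no separate proof, presenting the statement precisely as a corollary to the argument in the proof of Theorem \ref{equiv_reflection}, where the reflected string $\overline{s}$ is cyclically shifted by $\psi^j$ at a carry equal to $n-1$ to produce the permutiple $\overline{p}_{\psi^j}$. Your explicit verifications of the conjugated permutation $\psi^{-j}\sigma\psi^j$ and of the carry recursion via the reflection identity simply make precise what the paper leaves implicit, and they check out.
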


The above prompts us to define some additional terminology.

\begin{definition}
Suppose $p=(d_k,\ldots, d_0)_b=n\cdot(d_{\sigma(k)},\ldots, d_{\sigma(0)})_b$ is an $(n,b,\sigma)$-permutiple with carry sequence $c_k,\ldots,c_1,c_0=0$, where $c_j=n-1$. Then, we call the permutiple $\overline{p}_{\psi^j}=(\overline{d}_{\psi^j(k)},\ldots, \overline{d}_{\psi^j(0)})_b=n\cdot(\overline{d}_{\sigma\psi^j(k)},\ldots, \overline{d}_{\sigma\psi^j(0)})_b$ a {\it reflective sibling} of $p$.
\end{definition}

\begin{remark}
We draw the reader's attention to the use of the indefinite article in the above definition; for each $0 < j \leq k$ for which $c_j=n-1$, $p$ has a reflective sibling.
\end{remark}

\begin{example}\label{example_3}
We consider the $(4,10)$-permutiple featured in Example \ref{example_1}, namely,
\[
p=(d_4,d_3,d_2,d_1,d_0)=(8,6,7,1,2)_{10}=4\cdot (2,1,6,7,8)_{10}.
\]
Since its carry vector, $(c_4,c_3,c_2,c_1,c_0)=(0,2,3,3,0)$, contains two values of $n-1=3$, Corollary \ref{reflective_sibling_thm} guarantees that $p$ has two reflective siblings.
For $c_1=3$,

\begin{align*}
\overline{p}_{\psi}&=(\overline{d}_{\psi(4)},\overline{d}_{\psi(3)},\overline{d}_{\psi(2)},\overline{d}_{\psi(1)},\overline{d}_{\psi(0)})_{10}\\
&=(\overline{d}_0,\overline{d}_4,\overline{d}_3,\overline{d}_2,\overline{d}_1)_{10}\\
&=(7,1,3,2,8)_{10}\\
&=4\cdot (1,7,8,3,2)_{10}\\
&=4\cdot (\overline{d}_4,\overline{d}_0,\overline{d}_1,\overline{d}_3,\overline{d}_2)_{10}\\
&=4\cdot (\overline{d}_{\sigma \psi(4)},\overline{d}_{\sigma \psi(3)},\overline{d}_{\sigma \psi(2)},\overline{d}_{\sigma \psi(1)},\overline{d}_{\sigma \psi(0)})_{10},
\end{align*}
and for $c_2=3$,
\begin{align*}
\overline{p}_{\psi^2}&= (\overline{d}_{\psi^2(4)},\overline{d}_{\psi^2(3)},\overline{d}_{\psi^2(2)},\overline{d}_{\psi^2(1)},\overline{d}_{\psi^2(0)})_{10}\\
&=(\overline{d}_1,\overline{d}_0,\overline{d}_4,\overline{d}_3,\overline{d}_2)_{10}\\
&=(8,7,1,3,2)_{10}\\
&=4\cdot (2,1,7,8,3)_{10}\\
&=4\cdot (\overline{d}_2,\overline{d}_4,\overline{d}_0,\overline{d}_1,\overline{d}_3)_{10}\\
&=4\cdot (\overline{d}_{\sigma \psi^2(4)},\overline{d}_{\sigma \psi^2(3)},\overline{d}_{\sigma \psi^2(2)},\overline{d}_{\sigma \psi^2(1)},\overline{d}_{\sigma \psi^2(0)})_{10},
\end{align*}
the latter of which, was also mentioned in Example \ref{example_1}.
\end{example}

From past work \cite{holt_3,holt_4}, we have also seen that permutiples with zero carries other than $c_0=0$ enable us to find new permutiples. Specifically, we restate a result from previous work \cite[Corollary 1]{holt_3}.

\begin{theorem}\label{rotational_sibling_thm}
Let $(d_k, d_{k-1},\ldots, d_0)_b$ be an $(n,b,\sigma)$-permutiple with carries $c_k, c_{k-1},\ldots,c_0$.
If $c_j=0$, then $(d_{\psi^j(k)}, d_{\psi^{j}(k-1)},\ldots, d_{\psi^j(1)}, d_{\psi^j(0)})_b$
is an $(n,b,\psi^{-j} \sigma \psi^j)$-permutiple with carries
$c_{\psi^j(k)}$, $c_{\psi^{j}(k-1)}$, $\ldots$ , $c_{\psi^j(1)}$, $c_{\psi^j(0)}=c_j=0$.
\end{theorem}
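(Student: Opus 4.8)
The plan is to reduce the whole statement to the carry recurrence of Theorem \ref{digits_carries_1} together with its converse. First I would record the two boundary facts: $c_0 = 0$ by definition, and the final carry $c_{k+1} = 0$ because the product is a $(k+1)$-digit number (equivalently, the associated $L$-walk returns to the zero state). I would then note the converse of Theorem \ref{digits_carries_1}: any rearranged digits $d'_i \in \{0,\ldots,b-1\}$ together with carries $c'_i$ satisfying $b c'_{i+1} - c'_i = n d'_{\sigma'(i)} - d'_i$ for $0 \le i \le k$, with $c'_0 = c'_{k+1} = 0$, assemble into an $(n,b,\sigma')$-permutiple: multiplying the $i$-th equation by $b^i$ and summing telescopes the left-hand side to $b^{k+1} c'_{k+1} - c'_0 = 0$, leaving exactly the permutiple identity $(d'_k,\ldots,d'_0)_b = n\cdot(d'_{\sigma'(k)},\ldots,d'_{\sigma'(0)})_b$. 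Moreover, since the recurrence with $c'_0=0$ determines the carries uniquely (via Equation (\ref{state_transition})), the $c'_i$ really are the carries of the rotated multiplication. Thus it suffices to produce such a recurrence for the rotated data.

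Next I would set up the rotated data. Since $\psi^0$ is the identity the case $j=0$ is immediate, so assume $0 < j \le k$ and define $d'_i = d_{\psi^j(i)}$, $c'_i = c_{\psi^j(i)}$ for $0 \le i \le k$, $\sigma' = \psi^{-j}\sigma\psi^j$, and $c'_{k+1} := 0$. The new initial carry is $c'_0 = c_{\psi^j(0)} = c_j$, which equals $0$ precisely by the hypothesis; this is where $c_j = 0$ first enters. Writing $m = \psi^j(i)$ and using $\psi^j\sigma' = \sigma\psi^j$ gives $d'_{\sigma'(i)} = d_{\sigma(m)}$ and $d'_i = d_m$, so the target recurrence at index $i$ becomes $b c'_{i+1} - c_m = n d_{\sigma(m)} - d_m$. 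For $0 \le i \le k-1$ one has $\psi^j(i+1) = \psi(\psi^j(i)) = m+1$, hence $c'_{i+1} = c_{m+1}$, and the identity is exactly Theorem \ref{digits_carries_1} at index $m$. The carries $c'_i$ also inherit the bound $c'_i \le n-1$ from Theorem \ref{carries}, so they are legitimate carries.

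The main work, and the crux, is the wrap-around case $i = k$, which is where the cut point $j$ must be handled carefully. Here $m = \psi^j(k) = j-1$, so the target identity reads $b c'_{k+1} - c_{j-1} = n d_{\sigma(j-1)} - d_{j-1}$; since $c'_{k+1} = 0$ this is Theorem \ref{digits_carries_1} at index $j-1$ together with the fact that $c_j = 0$ forces the term $b c_j$ to vanish. In other words, the rotation splits the cyclic carry sequence exactly at the zero carry $c_j$, so that the single hypothesis $c_j = 0$ simultaneously supplies the new initial carry $c'_0$ and closes the new final carry $c'_{k+1}$. I expect this matching of boundary conditions across the split to be the one delicate point; everything else is bookkeeping with the index shift $\psi^j$. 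Having verified the recurrence for all $0 \le i \le k$ and both boundary conditions, the converse observation from the first step yields that $(d_{\psi^j(k)},\ldots,d_{\psi^j(0)})_b$ is an $(n,b,\psi^{-j}\sigma\psi^j)$-permutiple with carries $c_{\psi^j(k)},\ldots,c_{\psi^j(0)} = c_j = 0$, as claimed.
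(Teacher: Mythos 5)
The paper does not actually prove this statement: it is restated verbatim from earlier work (\cite[Corollary 1]{holt_3}), so there is no internal proof to compare against. Your argument is correct and is the natural one-line-deep proof one would expect to underlie the cited result: the telescoping sum $\sum_{i=0}^{k} b^i\bigl(bc'_{i+1}-c'_i\bigr)=b^{k+1}c'_{k+1}-c'_0$ converts the carry recurrence of Theorem \ref{digits_carries_1} plus the two boundary conditions into the permutiple identity, and you correctly isolate the crux, namely that the single hypothesis $c_j=0$ supplies both the new initial carry $c'_0=c_{\psi^j(0)}=c_j$ and, via the vanishing of $bc_j$ in the relation at index $j-1$, the new final carry $c'_{k+1}=0$. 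The only point stated slightly loosely is the claim $\psi^j(i+1)=m+1$ for $0\le i\le k-1$: when $m=\psi^j(i)=k$ one gets $\psi(m)=0$ rather than $k+1$, but since $c_0=c_{k+1}=0$ (both boundary facts you recorded at the outset) the needed equality $c_{\psi(m)}=c_{m+1}$ still holds, so this is a cosmetic rather than substantive issue. Your remark that the recurrence with $c'_0=0$ forces the $c'_i$ to be the genuine carries of the rotated multiplication is a worthwhile detail that is often left implicit.
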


The above inspires the next two definitions.

\begin{definition}
Suppose $p=(d_k,\ldots, d_0)_b=n\cdot(d_{\sigma(k)},\ldots, d_{\sigma(0)})_b$ is an $(n,b,\sigma)$-permutiple with carry sequence $c_k,\ldots,c_1,c_0=0$, where $c_j=0$. Then, we call the permutiple $p_{\psi^j}=(d_{\psi^j(k)},\ldots, d_{\psi^j(0)})_b=n\cdot(d_{\sigma\psi^j(k)},\ldots, d_{\sigma\psi^j(0)})_b$ a {\it rotational sibling} of $p$.
\end{definition}

\begin{definition}
Suppose $p$ is an $(n,b)$-permutiple. The collection of reflective and rotational siblings of $p$ together are called the {\it dihedral siblings} of $p$.
\end{definition}

\begin{example}\label{example_4}
Once again, we consider the $(4,10)$-permutiple
\[
p=(d_4,d_3,d_2,d_1,d_0)=(8,6,7,1,2)_{10} = 4 \cdot (2,1,6,7,8)_{10}
\]
with carry vector $(c_4,c_3,c_2,c_1,c_0)=(0,2,3,3,0)$.  In Example \ref{example_3} we found the two reflective siblings of $p$. We now find its rotational siblings. In doing so, we complete the collection of the dihedral siblings of $p$.

By Theorem \ref{rotational_sibling_thm}, every permutiple is trivially its own rotational sibling since $c_0$ must always be zero. On the other hand, we have a nontrivial rotational sibling for $c_4=0$, namely,
\begin{align*}
p_{\psi^4}&= (d_{\psi^4(4)},d_{\psi^4(3)},d_{\psi^4(2)},d_{\psi^4(1)},d_{\psi^4(0)})_{10}\\
&=(d_{3},d_{2},d_{1},d_{0},d_{4})_{10}\\
&=(6,7,1,2,8)_{10}\\
&=4\cdot (1,6,7,8,2)_{10}\\
&=4\cdot (d_1,d_3,d_2,d_4,d_0)_{10}\\
&=4\cdot (d_{\sigma \psi^4(4)},d_{\sigma \psi^4(3)},d_{\sigma \psi^4(2)},d_{\sigma \psi^4(1)},d_{\sigma \psi^4(0)})_{10}.
 \end{align*}
We see that, altogether, $p$ has four dihedral siblings: $p$, $p_{\psi^4}$, $\overline{p}_{\psi}$, and $\overline{p}_{\psi^2}$.
\end{example}

It is here that we say more about our chosen terminology. Let $C$ be an $(n,b)$-permutiple class for which $\Gamma_C$ has the state  $n-1$ as a vertex. We consider its symmetric closure, $\widehat{C}$. Letting $\mathscr{I}$ be the collection of cycle images of the cycles of $G_{\widehat{C}}$, we have seen, in general, that the union of the elements of $\mathscr{I}$ is a symmetric subgraph, $\Gamma_{\widehat{C}}$, of the $(n,b)$-Hoey-Sloane graph. We now consider the collection of closed walks, $\mathscr{W}_{\ell}$, on $\Gamma_{\widehat{C}}$ which have a fixed length, $\ell$. To denote an element of $\mathscr{W}_{\ell}$, we use the notation $(c_0,c_1,\ldots,c_{\ell-1})$ to mean the state-transition sequence
\[
\{(c_{0},c_{1}),(c_{1},c_{2}),\ldots, (c_{\ell-2},c_{\ell-1}),(c_{\ell-1},c_{0})\}
\]
on $\Gamma_{\widehat{C}}$ and its corresponding edge-label sequence,
\[
\{(d_{0},\widehat{d}_{0}),(d_{1},\widehat{d}_{1}),\ldots, (d_{\ell-2},\widehat{d}_{\ell-2}),(d_{\ell-1},\widehat{d}_{\ell-1})\},
\]
where $(d_j,\widehat{d}_{j})$ induces the transition from $c_j$ to $c_{j+1}$. We use this notation regardless of whether the collection is a cycle or a circuit on $\Gamma_{\widehat{C}}$.

Under these conditions, the dihedral group, $D_{\ell}$, acts on $\mathscr{W}_{\ell}$ by cyclically permuting vertices, edges, and labels, as well as reflecting vertices, edges, and labels in the sense established by Definition \ref{hs_subgraph_reflection}. More specifically, letting $\alpha$ be a $360^{\circ}/\ell$ rotation and $\beta$ a reflection, we may define the action by how these generating elements act on $\mathscr{W}_{\ell}:$ $\alpha w=(c_{\psi(0)},c_{\psi(1)},\ldots,c_{\psi(\ell-1)})$, where $\psi$ is the $\ell$-cycle $(0,1,\ldots,\ell-1)$, and $\beta w=(\overline{c}_0,\overline{c}_1,\ldots,\overline{c}_{\ell-1})$.

In this setting, we may now say that two permutiples, $p_1$ and $p_2$, are dihedral siblings if there is a composition of dihedral transformations which acts on the $L$-walk, $w_1$, of $p_1$ to produce the $L$-walk, $w_2$, of $p_2$ (that is, if $w_1$ and $w_2$ are in the same orbit under the action of $D_{\ell}$).

To illustrate the above, we consider the dihedral siblings listed in Example \ref{example_4}. The $L$-walks of $p$ and $p_{\psi^4}$ are visualized in the shape of a regular polygon in Figure \ref{4_10_L_walk_1}.

\begin{center}
\begin{tikzpicture}
\tikzset{edge/.style = {->,> = latex'}}
\tikzset{vertex/.style = {shape=circle,draw,minimum size=1.5em}}
[xscale=2, yscale=2, auto=left,every node/.style={circle,fill=blue!20}]
\node[vertex, initial right] (n0) at (3.7,7) {$0$};
\node[vertex] (n1) at (2.318036007720638,8.9021123765857615) {$3$};
\node[vertex] (n2) at (0.081968506839264,8.17557393946108) {$3$}; \node[vertex] (n3) at (0.081962267880038,5.8244346477391877) {$2$}; \node[vertex] (n4) at (2.318025912859958,5.0978843434129413) {$0$};
\draw[edge] (n0) edge node[right] {$(2,8)$} (n1);
\draw[edge] (n1) edge node[above] {$(1,7)$} (n2);
\draw[edge] (n2) edge node[left] {$(7,6)$} (n3);
\draw[edge] (n3) edge node[below] {$(6,1)$} (n4);
\draw[edge] (n4) edge node[right] {$(8,2)$} (n0);
\node[vertex, initial right] (n0) at (11.0,7) {$0$};
\node[vertex] (n1) at (9.718036007720638,8.9021123765857615) {$0$};
\node[vertex] (n2) at (7.481968506839264,8.17557393946108) {$3$}; \node[vertex] (n3) at (7.481962267880038,5.8244346477391877) {$3$}; \node[vertex] (n4) at (9.718025912859958,5.0978843434129413) {$2$};
\draw[edge] (n0) edge node[right] {$(8,2)$} (n1);
\draw[edge] (n1) edge node[above] {$(2,8)$} (n2);
\draw[edge] (n2) edge node[left] {$(1,7)$} (n3);
\draw[edge] (n3) edge node[below] {$(7,6)$} (n4);
\draw[edge] (n4) edge node[right] {$(6,1)$} (n0);
\end{tikzpicture}
\captionof{figure}{The $L$-walks corresponding to $p=(8,6,7,1,2)_{10}=4\cdot (2,1,6,7,8)_{10}$ (left) and its rotational sibling, $p_{\psi^4}=(6,7,1,2,8)_{10}
=4\cdot (1,6,7,8,2)_{10}$ (right).}
\label{4_10_L_walk_1}
\end{center}

The reflection of the $L$-walk corresponding to $p$ is visualized in Figure \ref{4_10_L_walk_2}.
\begin{center}
\begin{tikzpicture}
\tikzset{edge/.style = {->,> = latex'}}
\tikzset{vertex/.style = {shape=circle,draw,minimum size=1.5em}}
[xscale=2, yscale=2, auto=left,every node/.style={circle,fill=blue!20}]
\node[vertex, initial right] (n0) at (3.7,2) {$3$};
\node[vertex] (n1) at (2.318036007720638,3.9021123765857615) {$0$};
\node[vertex] (n2) at (0.081968506839264,3.17557393946108) {$0$}; \node[vertex] (n3) at (0.081962267880038,0.8244346477391877) {$1$}; \node[vertex] (n4) at (2.318025912859958,0.0978843434129413) {$3$};
\draw[edge] (n0) edge node[right] {$(7,1)$} (n1);
\draw[edge] (n1) edge node[above] {$(8,2)$} (n2);
\draw[edge] (n2) edge node[left] {$(2,3)$} (n3);
\draw[edge] (n3) edge node[below] {$(3,8)$} (n4);
\draw[edge] (n4) edge node[right] {$(1,7)$} (n0);
\end{tikzpicture}
\captionof{figure}{The closed walk resulting from the reflection of the $L$-walk corresponding to $p$.}
\label{4_10_L_walk_2}
\end{center}

Although the result depicted in Figure \ref{4_10_L_walk_2} is not an $L$-walk, there are two rotations which do result in $L$-walks. These are visualized in Figure \ref{4_10_L_walk_3}, and correspond to the reflective siblings of $p$.

\begin{center}
\begin{tikzpicture}
\tikzset{edge/.style = {->,> = latex'}}
\tikzset{vertex/.style = {shape=circle,draw,minimum size=1.5em}}
[xscale=2, yscale=2, auto=left,every node/.style={circle,fill=blue!20}]
\node[vertex, initial right] (n0) at (3.7,2) {$0$};
\node[vertex] (n1) at (2.318036007720638,3.9021123765857615) {$0$};
\node[vertex] (n2) at (0.081968506839264,3.17557393946108) {$1$}; \node[vertex] (n3) at (0.081962267880038,0.8244346477391877) {$3$}; \node[vertex] (n4) at (2.318025912859958,0.0978843434129413) {$3$};
\draw[edge] (n0) edge node[right] {$(8,2)$} (n1);
\draw[edge] (n1) edge node[above] {$(2,3)$} (n2);
\draw[edge] (n2) edge node[left] {$(3,8)$} (n3);
\draw[edge] (n3) edge node[below] {$(1,7)$} (n4);
\draw[edge] (n4) edge node[right] {$(7,1)$} (n0);
\node[vertex, initial right] (n0) at (11.0,2) {$0$};
\node[vertex] (n1) at (9.718036007720638,3.9021123765857615) {$1$};
\node[vertex] (n2) at (7.481968506839264,3.17557393946108) {$3$}; \node[vertex] (n3) at (7.481962267880038,0.8244346477391877) {$3$}; \node[vertex] (n4) at (9.718025912859958,0.0978843434129413) {$0$};
\draw[edge] (n0) edge node[right] {$(2,3)$} (n1);
\draw[edge] (n1) edge node[above] {$(3,8)$} (n2);
\draw[edge] (n2) edge node[left] {$(1,7)$} (n3);
\draw[edge] (n3) edge node[below] {$(7,1)$} (n4);
\draw[edge] (n4) edge node[right] {$(8,2)$} (n0);
\end{tikzpicture}
\captionof{figure}{The $L$-walks corresponding to the reflective siblings of $p$: $\overline{p}_{\psi}=(7,1,3,2,8)_{10}=4\cdot (1,7,8,3,2)_{10}$ (left) and $\overline{p}_{\psi^2}=(8,7,1,3,2)_{10}=4\cdot (2,1,7,8,3)_{10}$ (right).}
\label{4_10_L_walk_3}
\end{center}

Here we add that when the configuration of vertices and edges highlights a literal line of symmetry of $\Gamma_C$ inherited from the mother graph, as is the case in Figure \ref{symm_closure_example_hsg}, the reflection of an $L$-walk results in a walk on $\Gamma_C$ which is a literal reflection over this line of symmetry. Rotations acting on such walks result in $L$-walks when the zero state occupies the initial position as seen above. These represent the reflective siblings of some initial permutiple. The above helps to explain not only the nomenclature adopted by this effort, but also, perhaps more importantly, the types of permutations which arise when finding new permutiples of a fixed length from old \cite{holt_3,holt_4} (see Table \ref{conj_class_table}).

The above observations also demonstrate that an $(n,b)$-permutiple, $p$, having at least one carry equal to $n-1$ has a reflective sibling consisting of the reflected and cyclically-permuted digits of $p$.  If $p$ belongs to a symmetric class, $C$, then every reflective sibling of $p$ is still a member of $C$. More specifically, when the reflected digits form the same multiset as the digits of the original example, we can say more.

\begin{corollary}\label{sym_rev}
Suppose that $p=(d_{\pi(k)},\ldots, d_{\pi(0)})_b=n\cdot(d_{\pi \sigma(k)},\ldots, d_{\pi \sigma(0)})_b$ is an $(n,b,\sigma)$-permutiple whose $j^{th}$ carry is $n-1$. If $\{d_0,d_1,\ldots, d_k\}$ and $\{\overline{d}_0,\overline{d}_1,\ldots, \overline{d}_k\}$  are the same multiset, where $d_0 \leq d_1 \leq  \cdots \leq d_k$, then the reflective sibling of $p$,
\[
\overline{p}_{\psi^j}=(\overline{d}_{\pi \psi^j(k)},\ldots, \overline{d}_{\pi \psi^j(0)})_b=n \cdot (\overline{d}_{\pi \sigma \psi^j(k)},\ldots, \overline{d}_{\pi \sigma \psi^j(0)})_b,
\]
may also be represented as
\[
\overline{p}_{\psi^j}=(d_{\rho \pi \psi^j(k)},\ldots, d_{\rho \pi \psi^j(0)})_b=n\cdot(d_{\rho \pi \sigma\psi^j(k)},\ldots, d_{\rho \pi \sigma\psi^j(0)})_b,
\]
where $\rho$ is the reversal permutation.
\end{corollary}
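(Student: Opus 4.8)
The plan is to reduce the entire statement to a single index identity, namely $\overline{d}_m = d_{\rho(m)}$ for every $m \in \{0,1,\ldots,k\}$, and then to substitute this identity into the reflective-sibling representation that Corollary \ref{reflective_sibling_thm} already guarantees. Once that identity is in hand, nothing else is needed: the equality of the two numbers is inherited verbatim from the reflective-sibling identity, and only the labelling of the digits changes. So the real content is establishing the index identity, and the final paragraph is a purely formal rewrite.

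First I would prove that the hypotheses force $\overline{d}_m = d_{\rho(m)}$, where $\rho$ is the reversal permutation $\rho(m)=k-m$. The key observation is that the digit-reflection map $x \mapsto b-1-x$ is a strictly order-reversing bijection on $\{0,1,\ldots,b-1\}$. Applying it to the increasing sequence $d_0 \leq d_1 \leq \cdots \leq d_k$ therefore produces the decreasing sequence $\overline{d}_0 \geq \overline{d}_1 \geq \cdots \geq \overline{d}_k$, so that the increasing rearrangement of the multiset $\{\overline{d}_0,\ldots,\overline{d}_k\}$ is exactly $(\overline{d}_k,\overline{d}_{k-1},\ldots,\overline{d}_0)$. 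I would then invoke the hypothesis that $\{d_0,\ldots,d_k\}$ and $\{\overline{d}_0,\ldots,\overline{d}_k\}$ coincide as multisets: equal multisets have identical increasing rearrangements, so comparing $(d_0,d_1,\ldots,d_k)$ with $(\overline{d}_k,\overline{d}_{k-1},\ldots,\overline{d}_0)$ term by term yields $d_i = \overline{d}_{k-i}$ for every $i$, which is precisely $\overline{d}_m = d_{k-m} = d_{\rho(m)}$.

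I expect this index identity to be the main substantive point and the step requiring the most care, since it is where the sortedness assumption and the order-reversing behaviour of reflection interact. In particular, the argument must remain valid when digits are repeated; this causes no trouble, because equal multisets have identical sorted sequences regardless of ties, so the term-by-term comparison above goes through unchanged. Finally, I would substitute $\overline{d}_m = d_{\rho(m)}$ into the representation
\[
\overline{p}_{\psi^j}=(\overline{d}_{\pi \psi^j(k)},\ldots, \overline{d}_{\pi \psi^j(0)})_b=n \cdot (\overline{d}_{\pi \sigma \psi^j(k)},\ldots, \overline{d}_{\pi \sigma \psi^j(0)})_b
\]
furnished by Corollary \ref{reflective_sibling_thm}. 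Replacing each $\overline{d}_{\pi\psi^j(i)}$ by $d_{\rho\pi\psi^j(i)}$ and each $\overline{d}_{\pi\sigma\psi^j(i)}$ by $d_{\rho\pi\sigma\psi^j(i)}$ turns this directly into
\[
\overline{p}_{\psi^j}=(d_{\rho \pi \psi^j(k)},\ldots, d_{\rho \pi \psi^j(0)})_b=n\cdot(d_{\rho \pi \sigma\psi^j(k)},\ldots, d_{\rho \pi \sigma\psi^j(0)})_b,
\]
which is the asserted second representation, completing the proof.
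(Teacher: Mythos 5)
Your proposal is correct and follows essentially the same route as the paper: both establish $\overline{d}_m = d_{\rho(m)}$ from the sortedness of the digits, the order-reversing nature of $x \mapsto b-1-x$, and the equal-multiset hypothesis, and then substitute into the representation given by Corollary \ref{reflective_sibling_thm}. Your version simply spells out the term-by-term comparison of sorted rearrangements (including the repeated-digit case) that the paper's two-line proof leaves implicit.
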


\begin{proof}
Since the reflected digits of $p$ yield the same multiset, we may say that  $\overline{d}_0=d_{\rho(0)}\geq \overline{d}_1=d_{\rho(1)}\geq \cdots \geq \overline{d}_k=d_{\rho(k)}$. Since $\overline{d}_j=d_{\rho(j)}$ for all $0 \leq j \leq k$,  the statement follows.
\end{proof}

Given an $(n,b)$-permutiple, $p=(d_k,\ldots,d_0)_b$, whose $j^{th}$ carry is $n-1$, it is not difficult to manufacture examples which allow for the application of Corollary \ref{sym_rev}. Let $C$ be the permutiple class whose graph is $G_C=G_p$. We consider the reflective sibling, $\overline{p}_{\psi^j}=(\overline{d}_{\psi^j(k)},\ldots,\overline{d}_{\psi^j(0)})_b$, of $p$, and its class, $\overline{C}$, with graph $\overline{G}_C=G_{\overline{p}_{\psi^j}}$. The multiset union of the digits of $p$ and $\overline{p}_{\psi^j}$ give us a collection of digits whose reflection is itself. Thus, the concatenation of the digit string of $p$ and $\overline{p}_{\psi^j}$ give us an $(n,b)$-permutiple, $\widehat{p}$, which is a member of the symmetric closure of $C$, and to which we may apply Corollary \ref{sym_rev}. Of course, generally speaking, $\widehat{p}$ is only one of many members of $\widehat{C}$ with the same digits. We may easily find other examples by using the multi-image union, $\Delta_I$, corresponding to the multiset union of mother-graph cycles, $C_I$, which forms the collection of inputs constituting the permutiple string of $\widehat{p}$. Counting such examples becomes a problem of counting muli-Eulerian circuits of a directed graph, which is a problem taken up by Farrell and Levine \cite{farrell}.

The next example illustrates the above points, verifies Corollary \ref{sym_rev}, and  completes our consideration of the class of $(4,10)$-permutiples featured in Examples \ref{example_1} through \ref{example_4}.

\begin{example}\label{example_7}
Consider the $(4,10)$-permutiple
$p=(8,6,7,1,2)_{10}=4\cdot (2,1,6,7,8)_{10}$,  whose second carry is $n-1=3$, and its reflective sibling, $\overline{p}_{\psi^2}=(8,7,1,3,2)_{10}=4\cdot (2,1,7,8,3)_{10}$, given in Example \ref{example_3}. A concatenation of these two examples gives an element,
\[
\widehat{p}=(8,6,7,1,2,8,7,1,3,2)_{10}=4 \cdot       (2,1,6,7,8,2,1,7,8,3)_{10},
\]
of $\widehat{C}$ whose corresponding permutiple string is
\[
 \widehat{s}=(2,3)(3,8)(1,7)(7,1)(8,2)(2,8)(1,7)(7,6)(6,1)(8,2).
\]
The inputs of $\widehat{s}$ make up the multiset union of mother-graph cycles for this example, $C_I=C_0 \uplus C_1 \uplus \overline{C}_0 \uplus \overline{C}_1$, and the constituent cycles are shown in Tables \ref{reflection_example_2} and \ref{reflection_example_3}. The corresponding multi-images are $\Delta_0$, $\Delta_1$, $\overline{\Delta}_0$, and $\overline{\Delta}_1$ (where multigraph reflection is defined analogously as in Definition \ref{hs_subgraph_reflection}), and these multi-graphs look identical to  $\Gamma_0$, $\Gamma_1$, $\overline{\Gamma}_0$, and $\overline{\Gamma}_1$  (also shown in Tables \ref{reflection_example_2} and \ref{reflection_example_3}). The multi-image union $\Delta_I=\Delta_0 \uplus \Delta_1 \uplus \overline{\Delta}_0 \uplus \overline{\Delta}_1$ corresponding to $C_I$ is shown in Figure \ref{symm_closure_example_multi_hsg}, and the reader may verify that $\Delta_I$ satisfies the conditions of Corollary \ref{indegree_outdegree}. The reader may also observe that $\Delta_I$ differs from the set-theoretic union, $\Gamma_C$, shown in Figure \ref{symm_closure_example_hsg}.

\begin{center}
\begin{tikzpicture}
\tikzset{edge/.style = {->,> = latex'}}
\tikzset{vertex/.style = {shape=circle,draw,minimum size=1.5em}}
[xscale=2, yscale=2, auto=left,every node/.style={circle,fill=blue!20}]
\node[vertex,initial,accepting] (n0) at (0,5) {$0$};
\node[vertex] (n1) at (2,5) {$1$};
\node[vertex] (n2) at (4,5) {$2$};
\node[vertex] (n3) at (6,5) {$3$};
\draw[edge](n0) to[in=150,out=90, loop, style={min distance=10mm}] node[above] {\footnotesize{$(8,2)$}} (n0);
\draw[edge](n0) to[in=160,out=80, loop, style={min distance=25mm}] node[above] {\footnotesize{$(8,2)$}} (n0);
\draw[edge](n3) to[in=80,out=30, loop, style={min distance=10mm}] node[above] {\footnotesize{$(1,7)$}} (n3);
\draw[edge](n3) to[in=90,out=20, loop, style={min distance=25mm}] node[above] {\footnotesize{$(1,7)$}} (n3);
\draw[edge, bend right=0](n3) edge node[below] {\footnotesize{$(7,6)$}} (n2);
\draw[edge, bend right=25](n2) edge node[above] {\footnotesize{$(6,1)$}} (n0);
\draw[edge, bend left=70](n0) edge node[above] {\footnotesize{$(2,8)$}} (n3);
\draw[edge, bend right=0](n0) edge node[below] {\footnotesize{$(2,3)$}} (n1);
\draw[edge, bend left=25](n1) edge node[above] {\footnotesize{$(3,8)$}} (n3);
\draw[edge, bend right=60](n3) edge node[below] {\footnotesize{$(7,1)$}} (n0);
\end{tikzpicture}
\captionof{figure}{The multi-image union, $\Delta_I=\Delta_0 \uplus \Delta_1 \uplus \overline{\Delta}_0 \uplus \overline{\Delta}_1$, corresponding to the multiset union $C_I=C_0 \uplus C_1 \uplus \overline{C}_0 \uplus \overline{C}_1$ of the cycles of $G_{\widehat{C}}$.}
\label{symm_closure_example_multi_hsg}
\end{center}

As mentioned more generally, we may use $\Delta_I$ in Figure \ref{symm_closure_example_multi_hsg} to create new orderings of $C_I$ which produce permutiple strings. For instance, the input string
\[
s=(2,3)(3,8)(1,7)(1,7)(7,6)(6,1)(8,2)(8,2)(2,8)(7,1)
\]
determines an Eulerian circuit on $\Delta_I$ from the zero state back to itself (this is precisely what the conditions of Corollary \ref{indegree_outdegree} guarantee \cite{bang,farrell}). This is to say that $s$ is a permutiple string.

We now verify Corollary \ref{sym_rev} by considering the $(4,10)$-permutiple corresponding to $s$,
\[
q=(7,2,8,8,6,7,1,1,3,2)_{10}=4\cdot(1,8,2,2,1,6,7,7,8,3)_{10},
\]
with carry vector $(c_9,c_8,c_7,c_6,c_5,c_4,c_3,c_2,c_0)=(3,0,0,0,2,3,3,3,1,0)$. The ordered multiset of digits of both $\widehat{p}$ and $q$ is
\[
\{d_0,d_1,d_2,d_3,d_4,d_5,d_6,d_7,d_8,d_9\}=\{1,1,2,2,3,6,7,7,8,8\},
\]
and clearly $\overline{d}_j=d_{\rho(j)}$ for all $0 \leq j \leq 9$, where $\rho$ is the reversal permutation, $\rho(j)=9-j$.
Choose $\pi=\left(
\begin{array}{cccccccccc}
0 & 1 & 2 & 3 & 4 & 5 & 6 & 7 & 8 & 9\\
2 & 4 & 0 & 1 & 6 & 5 & 8 & 9 & 3 & 7\\
\end{array}
\right)$ and $\sigma=\left(
\begin{array}{cccccccccc}
0 & 1 & 2 & 3 & 4 & 5 & 6 & 7 & 8 & 9\\
1 & 6 & 4 & 9 & 5 & 2 & 0 & 8 & 7 & 3\\
\end{array}
\right)$
 so that
\begin{align*}
q&=(d_{\pi(9)},d_{\pi(8)},d_{\pi(7)},d_{\pi(6)},d_{\pi(5)},d_{\pi(4)},d_{\pi(3)},d_{\pi(2)},d_{\pi(1)},d_{\pi(0)})_{10}\\
&=(d_7,d_3,d_9,d_8,d_5,d_6,d_1,d_0,d_4,d_2)_{10}\\
&=(7,2,8,8,6,7,1,1,3,2)_{10}\\
&=4\cdot (1,8,2,2,1,6,7,7,8,3)_{10}\\
&=4\cdot (d_1,d_9,d_3,d_2,d_0,d_5,d_7,d_6,d_8,d_4)_{10}\\
&=4\cdot (d_{\pi\sigma(9)},d_{\pi\sigma(8)},d_{\pi\sigma(7)},d_{\pi\sigma(6)},d_{\pi\sigma(5)},d_{\pi\sigma(4)},d_{\pi\sigma(3)},d_{\pi\sigma(2)},d_{\pi\sigma(1)},d_{\pi\sigma(0)})_{10}.
\end{align*}
For $c_2=3$, we have a reflective sibling,
\begin{align*}
\overline{q}_{\psi^2}&=(\overline{d}_{\pi \psi^2(9)},\overline{d}_{\pi \psi^2(8)},\overline{d}_{\pi \psi^2(7)},\overline{d}_{\pi \psi^2(6)},\overline{d}_{\pi \psi^2(5)},\overline{d}_{\pi \psi^2(4)},\overline{d}_{\pi \psi^2(3)},\overline{d}_{\pi \psi^2(2)},\overline{d}_{ \pi \psi^2(1)}, \overline{d}_{\pi \psi^2(0)})_{10}\\
&=(\overline{d}_4,\overline{d}_2,\overline{d}_7,\overline{d}_3,\overline{d}_9,\overline{d}_8,\overline{d}_5,\overline{d}_6,\overline{d}_1,\overline{d}_0)_{10}\\
&=(\overline{3},\overline{2},\overline{7},\overline{2},\overline{8},\overline{8},\overline{6},\overline{7},\overline{1},\overline{1})_{10}\\
&=(6,7,2,7,1,1,3,2,8,8)_{10}.
\end{align*}
With all of its hypotheses met, we may now invoke Corollary \ref{sym_rev}:
\begin{align*}
\overline{q}_{\psi^2}&=(d_{\rho \pi \psi^2(9)},d_{\rho \pi \psi^2(8)},\ldots,d_{\rho \pi \psi^2(1)}, d_{\rho \pi \psi^2(0)})_{10}\\
&=(d_5,d_7,d_2,d_6,d_0,d_1,d_4,d_3,d_8,d_9)_{10}\\
&=(6,7,2,7,1,1,3,2,8,8)_{10}\\
&=4\cdot(1,6,8,1,7,7,8,3,2,2)_{10}\\
&=4\cdot(d_1,d_5,d_8,d_0,d_6,d_7,d_9,d_4,d_2,d_3)_{10}\\
&=4\cdot(d_{\rho \pi \sigma\psi^2(9)},d_{\rho \pi \sigma\psi^2(8)},\ldots, d_{\rho \pi \sigma\psi^2(1)}, d_{\rho \pi \sigma\psi^2(0)})_{10}.
\end{align*}
\end{example}

As the above illustrates, Corollary \ref{sym_rev} implicitly assumes that a permutiple, $p$, resides within a symmetric class, $C$.

To bring together several of the ideas we have discussed so far, and to showcase the generality of these methods, we now consider a base-$4$ example.

\begin{example}\label{example_5}
Consider the $(3,4)$-permutiple $p=(3,1,1,0,2,2)_4=3\cdot(1,0,1,2,3,2)_4$ with carry vector $(c_5,c_4,c_3,c_2,c_1,c_0)=(0,1,2,2,1,0)$. Let $C$ be the permutiple class determined by the graph of $p$. The $(3,4)$-mother graph is seen in Figure \ref{3_4_graph} with the edges of $G_C=G_p$ highlighted in bold red. The cycles and cycle images of $G_C$ are given in Table \ref{3_4_ci}.
 \begin{center}
\begin{tikzpicture}
\tikzset{edge/.style = {->,> = latex'}} \tikzset{vertex/.style = {shape=circle,draw,minimum size=1.5em}} [xscale=2, yscale=2, auto=left,every node/.style={circle,fill=blue!20}] \node[vertex] (n0) at (21.5,5) {$0$}; \node[vertex] (n1) at (20,6.5) {$1$}; \node[vertex] (n2) at (18.5,5) {$2$}; \node[vertex] (n3) at (20,3.5) {$3$};
\draw[edge, bend right=10, red, very thick](n0) to (n2);
\draw[edge, bend right=10](n2) to (n0);
\draw[edge, bend right=10, red, very thick](n1) to (n0);
\draw[edge, bend right=10](n0) to (n1);
\draw[edge, red, very thick](n1) to[in=130,out=50, loop, style={min distance=10mm}] (n1);
\draw[edge, red, very thick](n2) to[in=215,out=135, loop, style={min distance=10mm}] (n2);
\draw[edge](n0) to[in=40,out=-40, loop, style={min distance=10mm}] (n0);
\draw[edge](n3) to[in=310,out=230, loop, style={min distance=10mm}] (n3);
\draw[edge, bend right=10, red, very thick](n2) to (n3);
\draw[edge, bend right=10](n3) to (n2);
\draw[edge, bend right=10, red, very thick](n3) to (n1);
\draw[edge, bend right=10](n1) to (n3);
\end{tikzpicture}
\captionof{figure}{The $(3,4)$-mother graph with the edges of $G_C=G_p$ featured in bold red.}
\label{3_4_graph}.
\end{center}
\begin{center}
\begin{tabular}{|c|c|l|c|}
\hline
&
{\bf Cycle of $G_C$}
&
{\bf Cycle Image}
&
\\\hline
$C_1$
&
\begin{tikzpicture} \tikzset{edge/.style = {->,> = latex'}} \tikzset{vertex/.style = {shape=circle,draw,minimum size=1.5em}} [xscale=2, yscale=2, auto=left,every node/.style={circle,fill=blue!20}]
\node[vertex] (n1) at (6,6) {$1$};
\draw[edge](n1) to[loop] (n1);
\end{tikzpicture}
&
\begin{tikzpicture} \tikzset{edge/.style = {->,> = latex'}} \tikzset{vertex/.style = {shape=circle,draw,minimum size=1.5em}} [xscale=2, yscale=2, auto=left,every node/.style={circle,fill=blue!20}]
\color{white}\node[vertex,initial,accepting] (n0) at (0,5) {$0$};
\node[vertex,black] (n1) at (2,5) {$1$};
\node[vertex,black] (n2) at (4,5) {$2$};
 \draw[edge,black,bend left=10](n2) edge node[below] {\footnotesize $(1,1)$} (n1);
\end{tikzpicture}
&
$\Gamma_1$
\\\hline
$C_2$
&
\begin{tikzpicture} \tikzset{edge/.style = {->,> = latex'}} \tikzset{vertex/.style = {shape=circle,draw,minimum size=1.5em}} [xscale=2, yscale=2, auto=left,every node/.style={circle,fill=blue!20}]
\node[vertex] (n2) at (5,5) {$2$};
 \draw[edge](n2) to[in=220,out=130, loop, style={min distance=15mm}] (n2);
\end{tikzpicture}
&
\begin{tikzpicture} \tikzset{edge/.style = {->,> = latex'}} \tikzset{vertex/.style = {shape=circle,draw,minimum size=1.5em}} [xscale=2, yscale=2, auto=left,every node/.style={circle,fill=blue!20}]
\node[vertex,initial,accepting] (n0) at (0,5) {$0$};
\node[vertex] (n1) at (2,5) {$1$};
\node[vertex,white] (n2) at (4,5) {$2$};
 \draw[edge, bend right=10](n0) edge node[below] {\footnotesize $(2,2)$} (n1);
\end{tikzpicture}
&
$\Gamma_2$
\\\hline
$C_3$
&
\begin{tikzpicture} \tikzset{edge/.style = {->,> = latex'}} \tikzset{vertex/.style = {shape=circle,draw,minimum size=1.5em}} [xscale=2, yscale=2, auto=left,every node/.style={circle,fill=blue!20}]
\node[vertex] (n0) at (3,2) {$0$};
\node[vertex] (n1) at (1.5,3.5) {$1$};
\node[vertex] (n2) at (0,2) {$2$};
\node[vertex] (n3) at (1.5,0.5) {$3$};
\draw[edge](n0) to (n2);
\draw[edge](n1) to (n0);
\draw[edge](n2) to (n3);
\draw[edge](n3) to (n1);
\end{tikzpicture}
&
\begin{tikzpicture} \tikzset{edge/.style = {->,> = latex'}} \tikzset{vertex/.style = {shape=circle,draw,minimum size=1.5em}} [xscale=2, yscale=2, auto=left,every node/.style={circle,fill=blue!20}]
\node[vertex,initial,accepting] (n0) at (0,5) {$0$};
\node[vertex] (n1) at (2,5) {$1$};
\node[vertex] (n2) at (4,5) {$2$};
\draw[edge](n0) to[in=160,out=100, loop, style={min distance=10mm}] node[above] {\footnotesize $(3,1)$} (n0);
\draw[edge](n2) to[in=80,out=20, loop, style={min distance=10mm}] node[above] {\footnotesize $(0,2)$} (n2);
\draw[edge, bend right=10](n1) edge node[above] {\footnotesize $(1,0)$} (n0);
\draw[edge, bend left=10](n1) edge node[above] {\footnotesize $(2,3)$} (n2);
\end{tikzpicture}
&
$\Gamma_3$
\\\hline
\end{tabular}
\captionof{table}{The cycles of $G_C$ and their corresponding cycle images.}
\label{3_4_ci}
\end{center}

The union of the cycle images, $\Gamma_C=\Gamma_1 \cup \Gamma_2 \cup \Gamma_3$, is a labeled subgraph of the $(3,4)$-Hoey-Sloane graph, and is shown in Figure \ref{3_4_hsg} with the labeled edges of $\Gamma_C$ highlighted in bold red. Also, the configuration of the vertices and edges emphasizes a literal vertical line of symmetry of both $\Gamma$ and $\Gamma_C$ inherited from the mother graph. The permutiple string, $s=(2,2)(2,3)(0,2)(1,1)(1,0)(3,1)$, of $p$ determines the $L$-walk on $\Gamma_C$.

\begin{center}
\begin{tikzpicture}
\tikzset{edge/.style = {->,> = latex'}}
\tikzset{vertex/.style = {shape=circle,draw,minimum size=1.5em}}
[xscale=2, yscale=2, auto=left,every node/.style={circle,fill=blue!20}]
\node[vertex,initial,accepting] (n0) at (0,5) {$\bf 0$};
\node[vertex] (n1) at (3.2,5) {$\bf 1$};
\node[vertex] (n2) at (6.4,5) {$\bf 2$};
\draw[edge,red,very thick](n0) to[in=160,out=100, loop, style={min distance=10mm}] node[left] {\footnotesize{$\color{black}{(0,0),}\color{red}{\bf (3,1)}$}} (n0);
\draw[edge](n1) to[in=120,out=60,loop,style={min distance=7mm}] node[above] {\footnotesize{$(0,1),(3,2)$}} (n1);
\draw[edge,red,very thick](n2) to[in=80,out=20, loop, style={min distance=10mm}] node[right] {\footnotesize{${\bf (0,2)}\color{black}{, (3,3)}$}} (n2);
\draw[edge, red, bend right=10,very thick](n0) edge node[below] {\footnotesize{$\bf (2,2)$}} (n1);
\draw[edge, red, bend right=10,very thick](n1) edge node[above] {\footnotesize{$\bf (1,0)$}} (n0);
\draw[edge, red, bend left=10,very thick](n2) edge node[below] {\footnotesize{$\bf (1,1)$}} (n1);
\draw[edge, red, bend left=10,very thick](n1) edge node[above] {\footnotesize{$\bf (2,3)$}} (n2);
\draw[edge, bend left=50](n0) edge node[above] {\footnotesize{$(1,3)$}} (n2);
\draw[edge, bend right=90](n2) edge node[above] {\footnotesize{$(2,0)$}} (n0);
\end{tikzpicture}
\captionof{figure}{The $(3,4)$-Hoey-Sloane graph with the labeled edges of $\Gamma_C$ featured in bold red.}
\label{3_4_hsg}
\end{center}

The ordered multiset of the digits of $p$ is $\{d_0,d_1,d_2,d_3,d_4,d_5\}=\{0,1,1,2,2,3\}=\{\overline{0},\overline{1},\overline{1},\overline{2},\overline{2},\overline{3}\}$, and clearly $\overline{d}_j=d_{\rho(j)}$ for all $0 \leq j \leq 5$, where $\rho$ is the reversal permutation, $\rho(j)=5-j$.
To apply Corollary \ref{sym_rev}, we choose
$\pi=\left(
\begin{array}{cccccc}
0 & 1 & 2 & 3 & 4 & 5\\
4 & 3 & 0 & 2 & 1 & 5\\
\end{array}
\right)$ and $\sigma=\left(
\begin{array}{cccccc}
0 & 1 & 2 & 3 & 4 & 5\\
0 & 5 & 1 & 3 & 2 & 4\\
\end{array}
\right)$ so that
\begin{align*}
p&=(d_{\pi(5)},d_{\pi(4)},d_{\pi(3)},d_{\pi(2)},d_{\pi(1)},d_{\pi(0)})_4\\
&=(d_5,d_1,d_2,d_0,d_3,d_4)_4\\
&=(3,1,1,0,2,2)_4\\
&=3\cdot(1,0,1,2,3,2)_4\\
&=3\cdot(d_1,d_0,d_2,d_3,d_5,d_4)_4\\
&=3\cdot(d_{\pi\sigma(5)},d_{\pi\sigma(4)},d_{\pi\sigma(3)},d_{\pi\sigma(2)},d_{\pi\sigma(1)},d_{\pi\sigma(0)})_4.
\end{align*}
Now, since $c_2=2$, Corollary \ref{reflective_sibling_thm} gives us the reflective sibling
\begin{align*}
\overline{p}_{\psi^2}&=(\overline{d}_{\pi\psi^2(5)},\overline{d}_{\pi\psi^2(4)},\overline{d}_{\pi\psi^2(3)},\overline{d}_{\pi\psi^2(2)},\overline{d}_{\pi\psi^2(1)},\overline{d}_{\pi\psi^2(0)})_4\\
&=(\overline{d}_3,\overline{d}_4,\overline{d}_5,\overline{d}_1,\overline{d}_2,\overline{d}_0)_4\\
&=(\overline{2},\overline{2},\overline{3},\overline{1},\overline{1},\overline{0})_4\\
&=(1,1,0,2,2,3)_4.
\end{align*}
We now have everything we need to apply and verify Corollary \ref{sym_rev}:
\begin{align*}
\overline{p}_{\psi^2}&=(d_{\rho\pi\psi^2(5)},d_{\rho\pi\psi^2(4)},d_{\rho\pi\psi^2(3)},d_{\rho\pi\psi^2(2)},d_{\rho\pi\psi^2(1)},d_{\rho\pi\psi^2(0)})_4\\
&=(d_2,d_1,d_0,d_4,d_3,d_5)_4\\
&=(1,1,0,2,2,3)_4\\
&=3\cdot(0,1,2,3,2,1)_4\\
&=3\cdot(d_0,d_1,d_4,d_5,d_3,d_2)_4\\
&=(d_{\rho\pi\sigma\psi^2(5)},d_{\rho\pi\sigma\psi^2(4)},d_{\rho\pi\sigma\psi^2(3)},d_{\rho\pi\sigma\psi^2(2)},d_{\rho\pi\sigma\psi^2(1)},d_{\rho\pi\sigma\psi^2(0)})_4.
\end{align*}
Applying the above to $c_3=2$, we see that the reflective sibling $\overline{p}_{\psi^3}$ is simply $p$.

Expressing the above in terms of the action of $D_6$ on $\mathscr{W}_6$, the representation of the $L$-walk of $p$ and its reflection (not an $L$-walk) is shown in Figure \ref{3_4_L_walk}. A clockwise rotation by two vertices of the graph on the right gives us the representation of the $L$-walk of the reflective sibling, $\overline{p}_{\psi^2}$, of $p$.
\begin{center}
\begin{tikzpicture}
\tikzset{edge/.style = {->,> = latex'}}
\tikzset{vertex/.style = {shape=circle,draw,minimum size=1.5em}} [xscale=2, yscale=2, auto=left,every node/.style={circle,fill=blue!20}]
\node[vertex, initial right] (n0) at (4,5) {$0$};
\node[vertex] (n1) at (3.00000153205039,6.732049923038268) {$1$}; \node[vertex] (n2) at (1.000003064103128,6.732052576626029) {$2$}; \node[vertex] (n3) at (0.000000000007041,5.000005307179587) {$2$}; \node[vertex] (n4) at (0.999993871803133,3.2679527305616887) {$1$}; \node[vertex] (n5) at (2.999992339736313,3.2679447697984068) {$0$};
\draw[edge] (n0) edge node[right] {$(2,2)$} (n1);
\draw[edge] (n1) edge node[above] {$(2,3)$} (n2);
\draw[edge] (n2) edge node[left] {$(0,2)$} (n3);
\draw[edge] (n3) edge node[left] {$(1,1)$} (n4);
\draw[edge] (n4) edge node[below] {$(1,0)$} (n5);
\draw[edge] (n5) edge node[right] {$(3,1)$} (n0);
\node[vertex, initial right] (n0) at (11.2,5) {$2$};
\node[vertex] (n1) at (10.20000153205039,6.732049923038268) {$1$}; \node[vertex] (n2) at (8.200003064103128,6.732052576626029) {$0$}; \node[vertex] (n3) at (7.200000000007041,5.000005307179587) {$0$}; \node[vertex] (n4) at (8.199993871803133,3.2679527305616887) {$1$}; \node[vertex] (n5) at (10.199992339736313,3.2679447697984068) {$2$};
\draw[edge] (n0) edge node[right] {$(1,1)$} (n1);
\draw[edge] (n1) edge node[above] {$(1,0)$} (n2);
\draw[edge] (n2) edge node[left] {$(3,1)$} (n3);
\draw[edge] (n3) edge node[left] {$(2,2)$} (n4);
\draw[edge] (n4) edge node[below] {$(2,3)$} (n5);
\draw[edge] (n5) edge node[right] {$(0,2)$} (n0);
\end{tikzpicture}
\captionof{figure}{The $L$-walk of $p$ (left) and its reflection (right).}
\label{3_4_L_walk}
\end{center}

An interesting feature of the above example is that the reflection of $w$ is also a rotation of $w$, which we may directly observe in Figure \ref{3_4_L_walk}.
\end{example}

\begin{remark}
Excluding examples which involve a leading zero digit, we may obtain the collection of $(3,4)$-permutiples by multiplying the elements of \seqnum{A023060} by $3$. Consequently, the number featured in the new example obtained above, that is $(0,1,2,3,2,1)_4=(1,2,3,2,1)_4$, does not appear as an element of \seqnum{A023060}.
\end{remark}

\subsection{Other symmetries involving dihedral siblings}

In other work \cite{holt_5}, we have observed cases where transposing edge labels is possible by maintaining the same state-transition sequence. We now investigate this question more generally, and begin with a definition.
\begin{definition}
Let $p=(d_k,\ldots, d_0)_b=n\cdot(d_{\sigma(k)},\ldots, d_{\sigma(0)})_b$, be an $(n,b,\sigma)$-permutiple and let $s=(d_{0},d_{\sigma(0)})(d_{1},d_{\sigma(1)})\cdots(d_{k-1},d_{\sigma(k-1)})(d_{k},d_{\sigma(k)})$ be its corresponding permutiple string. A permutation, $\varphi$, of the inputs of $s$ which results in another permutiple string is called a {\it symmetry} of $p$.
\end{definition}

Under this definition, all rotational siblings are the result of applying a symmetry of the form $\psi^j$ to the $L$-walk corresponding to $p$.

We recall that an edge label, $(d_1,d_2)$, cannot appear on distinct edges of $\Gamma$ \cite[Theorem 5]{holt_6}, which makes explicit a basic fact.

\begin{corollary}\label{unique_transition_1}
Let $(d_k,\ldots, d_0)_b=n\cdot(d_{\sigma(k)},\ldots, d_{\sigma(0)})_b$, be an $(n,b,\sigma)$-permutiple, and let $s=(d_{0},d_{\sigma(0)})(d_{1},d_{\sigma(1)})\cdots(d_{k-1},d_{\sigma(k-1)})(d_{k},d_{\sigma(k)})$ be its corresponding permutiple string. Then, $s$ uniquely defines an $L$-walk on $\Gamma$.
\end{corollary}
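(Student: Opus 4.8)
The plan is to obtain this as a direct consequence of the fact, cited immediately before the statement, that no input label $(d_1,d_2)$ can appear on two distinct edges of $\Gamma$ \cite[Theorem 5]{holt_6}. First I would recall that the mapping $\mu$ sending each mother-graph edge $(d_1,d_2)$ to its transition $(c_1,c_2)$, determined by the state-transition relation in Equation~\eqref{state_transition}, is well-defined on all of $E_M$ \cite[Theorem 6]{holt_6}; the cited Theorem 5 guarantees moreover that it is injective on edge labels, so that each input pins down not merely a target state but the entire edge $(c_1,c_2)$, including its source vertex.

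Next I would observe that, by hypothesis, $s$ is the permutiple string of $p$ and hence a member of $L$, so it already defines \emph{at least one} $L$-walk beginning and ending at the zero state; the content of the corollary is therefore uniqueness, not existence. To establish uniqueness I would read the inputs of $s$ in order: the $j$-th input $(d_j,d_{\sigma(j)})$ is an edge label, and by the injectivity just recalled it determines a unique edge $(c_j,c_{j+1})$ of $\Gamma$. Since both endpoints of each such edge are thereby forced, the full state sequence $c_0,c_1,\ldots,c_{k+1}$ is determined with no choice available at any step, and this is precisely the $L$-walk traced by $s$.

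The one point requiring care is consistency of the chain: I must confirm that the target state of the edge carrying the $j$-th input equals the source state of the edge carrying the $(j+1)$-th input, so that the edges genuinely concatenate into a single walk rather than a disconnected collection of arcs. This follows because $s\in L$ describes a valid single-digit base-$b$ multiplication by $n$, whose successive carries are exactly the states $c_j$ linked by Theorem \ref{digits_carries_1} and Equation~\eqref{state_transition}; thus the chaining is automatic and imposes no extra constraint.

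I expect the main (and essentially only) conceptual obstacle to be keeping clear the distinction between existence and uniqueness: the permutiple-string hypothesis is what supplies a walk, while the injectivity of the label-to-edge assignment is what forbids any second walk on the same ordered sequence of inputs. Everything beyond invoking \cite[Theorem 5]{holt_6} is routine bookkeeping with the recurrence, so the proof should be short.
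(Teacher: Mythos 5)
Your proposal is correct and follows exactly the route the paper intends: the corollary is stated as an immediate consequence of the recalled fact that an edge label cannot appear on distinct edges of $\Gamma$ (Theorem 5 of the cited multigraph paper), so each input of $s$ pins down a unique edge and hence the whole walk. The paper offers no further argument, and your additional remarks on existence and on the chaining of consecutive carries are harmless elaborations of the same idea.
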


In terms of symmetries, we have another result.

\begin{corollary}\label{unique_transition_2}
Let $p=(d_k,\ldots, d_0)_b=n \cdot (d_{\sigma(k)},\ldots, d_{\sigma(0)})_b$ be an $(n,b,\sigma)$-permutiple, let $S= \{(c_{0},c_{1}),(c_{1},c_{2}),\ldots, (c_{k-1},c_{k}),(c_{k},c_{0})\}$ be its state-transition sequence, and let
\[
s=(d_{0},d_{\sigma(0)})(d_{1},d_{\sigma(1)})\cdots(d_{k-1},d_{\sigma(k-1)})(d_{k},d_{\sigma(k)})
\]
be its corresponding permutiple string. If $\varphi$ is a symmetry of $p$, then the state-transition sequence of the permutiple string
\[
s_{\varphi}=(d_{\varphi(0)},d_{\sigma\varphi(0)})(d_{\varphi(1)},d_{\sigma\varphi(1)})\cdots(d_{\varphi(k-1)},d_{\sigma\varphi(k-1)})(d_{\varphi(k)},d_{\sigma\varphi(k)})
\]
is $S_{\varphi}=\{(c_{\varphi(0)},c_{\varphi(1)}),(c_{\varphi(1)},c_{\varphi(2)}),\ldots, (c_{\varphi(k-1)},c_{\varphi(k)}),(c_{\varphi(k)},c_{\varphi(0)})\}$.
\end{corollary}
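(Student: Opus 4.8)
The plan is to lean almost entirely on the fact, recalled just before Corollary \ref{unique_transition_1}, that an edge label $(d_1,d_2)$ appears on exactly one edge of $\Gamma$ \cite[Theorem 5]{holt_6}. The consequence I want to exploit is that every input carries a \emph{unique} state transition attached to it, independent of where it sits in a string. Once this is in hand, computing $S_\varphi$ reduces to reading off, for each input of $s_\varphi$, the transition already recorded for it in $s$.

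First I would note that, since $\varphi$ is a symmetry, $s_\varphi$ is itself a permutiple string, so by Corollary \ref{unique_transition_1} it determines a unique $L$-walk on $\Gamma$. Writing its successive states as $e_0,e_1,\ldots,e_k$ with $e_0=0$, the $i$-th transition of this walk is $(e_i,e_{i+1})$ (indices read modulo $k+1$). The $i$-th input of $s_\varphi$ is $(d_{\varphi(i)},d_{\sigma\varphi(i)})$, which is precisely the input occupying position $\varphi(i)$ in the original string $s$; in $s$ that input induces the transition $(c_{\varphi(i)},c_{\varphi(i)+1})$ recorded in $S$, in accordance with Equation (\ref{state_transition}) and Theorem \ref{digits_carries_1}.

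Next I would invoke the unique-edge property: the input $(d_{\varphi(i)},d_{\sigma\varphi(i)})$ labels only one edge of $\Gamma$, namely $(c_{\varphi(i)},c_{\varphi(i)+1})$. Hence wherever this input appears, and in particular at position $i$ of the walk of $s_\varphi$, it must induce that same transition. This forces $(e_i,e_{i+1})=(c_{\varphi(i)},c_{\varphi(i)+1})$, so $e_i=c_{\varphi(i)}$ for every $i$. Substituting back into $\{(e_i,e_{i+1})\}$ then yields exactly $S_\varphi=\{(c_{\varphi(0)},c_{\varphi(1)}),\ldots,(c_{\varphi(k)},c_{\varphi(0)})\}$.

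The only delicate point is the interplay between two indexing conventions: the end state of the $i$-th transition, read from $s$, is $c_{\varphi(i)+1}$ (successor index in the original numbering), whereas the start state of the $(i+1)$-th transition is $c_{\varphi(i+1)}$. The hypothesis that $s_\varphi$ is a genuine $L$-walk is exactly what makes consecutive transitions connect, i.e.\ $c_{\varphi(i)+1}=c_{\varphi(i+1)}$, so the two conventions agree and the closed walk closes up with $e_0=c_{\varphi(0)}=0$. I expect this bookkeeping, rather than any substantive inequality or structural argument, to be the main obstacle, since the genuine mathematical content is already furnished by the unique-edge property together with Corollary \ref{unique_transition_1}.
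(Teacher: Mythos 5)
Your proof is correct and is essentially the argument the paper intends: the corollary is stated without proof precisely because it follows immediately from Corollary \ref{unique_transition_1} together with the fact that an edge label occurs on only one edge of $\Gamma$, which is exactly the unique-edge property you exploit. Your careful handling of the indexing point (that the validity of the $L$-walk of $s_\varphi$ forces $c_{\varphi(i)+1}=c_{\varphi(i+1)}$) fills in the only detail the paper leaves implicit.
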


For a permutiple string, $s$, we see that transposing inputs, $(d_i,d_{\sigma(i)})$
and $(d_j,d_{\sigma(j)})$, for which their associated state transitions, $(c_i,c_{i+1})$ and $(c_j,c_{j+1})$, are equal does not change the path traversed on $\Gamma$. That is, we may transpose $(d_i,d_{\sigma(i)})$
and $(d_j,d_{\sigma(j)})$ in $s$ and still have a permutiple string. Furthermore, every symmetry, $\varphi$, which leaves $S$ fixed also results in a permutiple string,
\[
s_\varphi=(d_{\varphi(0)},d_{\sigma\varphi(0)})(d_{\varphi(1)},d_{\sigma\varphi(1)})\cdots(d_{\varphi(k-1)},d_{\sigma\varphi(k-1)})(d_{k},d_{\sigma\varphi(k)}),
\]
yielding an $(n,b,\varphi^{-1} \sigma \varphi)$-permutiple,
\[
p_{\varphi}=(d_{\varphi(k)},\ldots,d_{\varphi(0)})_b=(d_{\sigma\varphi(k)},\ldots,d_{\sigma\varphi(0)})_b.
\]

The next result details the interaction between the symmetries of $p$ which fix $S$, and the dihedral siblings of $p$.

\begin{theorem}\label{symmetries_of_dihedral_siblings}
Let $p=(d_k,\ldots,d_0)_b=(d_{\sigma(k)},\ldots,d_{\sigma(0)})_b$ be an $(n,b,\sigma)$-permutiple with carries $c_k,\ldots,c_0=0$, and let $S$ be the state-transition sequence of the $L$-walk of $p$. Also, let $\varphi$ be a symmetry of $p$ which fixes $S$, and let $\psi$ be the $(k+1)$-cycle $(0,1,\ldots,k)$. If $p_{\psi^j}$ is a rotational sibling of $p$, then $\varphi \psi^j$ is also a symmetry of $p$. In particular,
\[
p_{\varphi\psi^j}=(d_{\varphi\psi^j(k)},\ldots,d_{\varphi\psi^j(0)})_b=n\cdot(d_{\sigma\varphi\psi^j(k)},\ldots,d_{\sigma\varphi\psi^j(0)})_b
\]
is an $(n,b,\psi^{-j}\varphi^{-1} \sigma \varphi\psi^j)$-permutiple. If $\overline{p}_{\psi^j}$ is a reflective sibling of $p$, then
\[
\overline{p}_{\varphi\psi^j}=(\overline{d}_{\varphi\psi^j(k)},\ldots,\overline{d}_{\varphi\psi^j(0)})_b=n\cdot(\overline{d}_{\sigma\varphi\psi^j(k)},\ldots,\overline{d}_{\sigma\varphi\psi^j(0)})_b
\]
is an $(n,b,\psi^{-j}\varphi^{-1} \sigma \varphi\psi^j)$-permutiple.
\end{theorem}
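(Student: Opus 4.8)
The plan is to work directly with the unique state transition attached to each input. Recall that every input determines a unique edge of $\Gamma$ (Corollary \ref{unique_transition_1}), so the input occupying index $m$ in the permutiple string of $p$, namely $(d_m,d_{\sigma(m)})$, always carries the transition $(c_m,c_{m+1})$, with indices read cyclically modulo $k+1$. First I would extract two bookkeeping identities from the hypothesis. The input placed at position $i$ of $s_\varphi$ is the original input at index $\varphi(i)$, so it carries the transition $(c_{\varphi(i)},c_{\varphi(i)+1})$; since $\varphi$ fixes $S$, the sequence of these transitions is exactly $S$ (Corollary \ref{unique_transition_2}), so this transition must equal the $i$-th transition $(c_i,c_{i+1})$ of $S$. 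Comparing components yields
\[
c_{\varphi(m)}=c_m \quad\text{and}\quad c_{\varphi(m)+1}=c_{m+1}\qquad(0\le m\le k),
\]
again with cyclic indices.

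For the rotational statement I would then reorder the inputs of $s$ by $\varphi\psi^j$ and check that they trace a genuine $L$-walk. The input placed at position $i$ is now the original input at index $\varphi\psi^j(i)$, hence carries the transition $(c_{\varphi\psi^j(i)},c_{\varphi\psi^j(i)+1})$. Writing $m=\psi^j(i)$ and applying the two identities, this transition equals $(c_{\psi^j(i)},c_{\psi^j(i)+1})$, which is precisely the $i$-th transition of the $L$-walk of the rotational sibling $p_{\psi^j}$. Thus $s_{\varphi\psi^j}$ traces the same closed walk, based at $c_{\varphi(j)}=c_j=0$; because a reordering preserves the multisets of left and of right components, the multiset condition holds automatically, so $s_{\varphi\psi^j}$ is a permutiple string and $\varphi\psi^j$ is a symmetry of $p$. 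The digit permutation follows from the standard conjugation computation: if $q=(d_{\tau(k)},\ldots,d_{\tau(0)})_b=n\cdot(d_{\sigma\tau(k)},\ldots,d_{\sigma\tau(0)})_b$ then $q$ is an $(n,b,\tau^{-1}\sigma\tau)$-permutiple, and taking $\tau=\varphi\psi^j$ gives the claimed $(n,b,\psi^{-j}\varphi^{-1}\sigma\varphi\psi^j)$-permutiple.

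For the reflective statement I would run the identical argument after barring. By Theorem \ref{mg_hs_reflection} the reflected input $(\overline{d}_m,\overline{d}_{\sigma(m)})$ carries the transition $(\overline{c}_m,\overline{c}_{m+1})$, and barring the two identities gives $\overline{c}_{\varphi(m)}=\overline{c}_m$ and $\overline{c}_{\varphi(m)+1}=\overline{c}_{m+1}$. The same chaining check shows the reflected inputs reordered by $\varphi\psi^j$ trace the $L$-walk of the reflective sibling $\overline{p}_{\psi^j}$ (guaranteed by Corollary \ref{reflective_sibling_thm}), now based at $\overline{c}_{\varphi(j)}=\overline{c}_j=n-1-c_j=0$, precisely because $c_j=n-1$; the multiset condition again holds since reflecting and then permuting sends both digit multisets to $\{\overline{d}_m\}$. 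Hence $\overline{p}_{\varphi\psi^j}$ is a permutiple, and the conjugation computation with $\tau=\varphi\psi^j$ acting on the barred digits delivers the same digit permutation $\psi^{-j}\varphi^{-1}\sigma\varphi\psi^j$.

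I expect the main obstacle to be the careful cyclic-index bookkeeping in the chaining step: one must keep straight that the input sitting at position $i$ carries the transition attached to its \emph{original} index $\varphi\psi^j(i)$, rather than a transition recomputed from neighbouring positions, and then recognize that the two identities extracted from ``$\varphi$ fixes $S$'' are exactly what force consecutive transitions to meet and what realign the base state to $0$. Once those identities are in hand, both the rotational and reflective cases collapse to the same short verification, the only difference being whether the base state arises from $c_j=0$ or from $\overline{c}_j=0$.
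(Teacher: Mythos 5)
Your proposal is correct and follows essentially the same route as the paper's proof: you show that reordering by $\varphi$ leaves the state-transition sequence (and its reflection) unchanged, so that the subsequent rotation by $\psi^j$ lands on an $L$-walk based at $c_j=0$ (respectively $\overline{c}_j=0$), and you finish with the standard conjugation computation for the digit permutation. Your version merely makes explicit the index identities $c_{\varphi(m)}=c_m$ and $c_{\varphi(m)+1}=c_{m+1}$ and the preservation of the digit multisets, which the paper leaves implicit.
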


\begin{proof}
Let $S=\{(c_{0},c_{1}),(c_{1},c_{2}),\ldots, (c_{k-1},c_{k}),(c_{k},c_{0}) \}$ be the state-transition sequence of the $L$-walk of $p$, and let
\[
s=(d_{0},d_{\sigma(0)})(d_{1},d_{\sigma(1)})\cdots(d_{k-1},d_{\sigma(k-1)})(d_{k},d_{\sigma(k)})
\]
be its corresponding permutiple string. Applying $\varphi$ to $s$ results in the same walk traversed on $\Gamma$. Thus, since $p_{\psi^j}$ is a rotational sibling of $p$, applying $\psi^j$ to $s$ also results in an $L$-walk. Thus, applying $\varphi$ and $\psi^j$ to the inputs of $s$ results in another permutiple string,
\[
s_{\varphi\psi^j}=(d_{\varphi\psi^j(0)},d_{\sigma\varphi\psi^j(0)})(d_{\varphi\psi^j(1)},d_{\sigma\varphi\psi^j(1)})\cdots(d_{\varphi\psi^j(k)},d_{\sigma\varphi\psi^j(k)}).
\]

Applying a reflection to each digit of $p$, we obtain a walk on $\Gamma$ with state-transition sequence $\overline{S}=\{(\overline{c}_{0},\overline{c}_{1}),(\overline{c}_{1},\overline{c}_{2}),\ldots, (\overline{c}_{k-1},\overline{c}_{k}),(\overline{c}_{k},\overline{c}_{0})\}$,  which is not an $L$-walk since $\overline{c}_{0}=\overline{0}=n-1$. Since $\varphi$ fixes the elements of $S$, it also fixes the elements of $\overline{S}$, so that $\overline{S}_{\varphi}=\{(\overline{c}_{\varphi(0)},\overline{c}_{\varphi(1)}),(\overline{c}_{\varphi(1)},\overline{c}_{\varphi(2)}),\ldots, (\overline{c}_{\varphi(k-1)},\overline{c}_{\varphi(k)}),(\overline{c}_{\varphi(k)},\overline{c}_{\varphi(0)})\}=\overline{S}$, which, again, is not an $L$-walk. However, since $\overline{p}_{\psi^j}$ is a reflective sibling of $p$, we have $\overline{c}_j=\overline{n-1}=0$. It follows that
\[
\overline{S}_{\varphi\psi^j}=\{(\overline{c}_{\varphi\psi^j(0)},\overline{c}_{\varphi\psi^j(1)}),(\overline{c}_{\varphi\psi^j(1)},\overline{c}_{\varphi\psi^j(2)}),\ldots, (\overline{c}_{\varphi\psi^j(k)},\overline{c}_{\varphi\psi^j(0)})\}
\]
is an $L$-walk, from which we know that
\[
\overline{s}_{\varphi\psi^j}=(\overline{d}_{\varphi\psi^j(0)},\overline{d}_{\sigma\varphi\psi^j(0)})(\overline{d}_{\varphi\psi^j(1)},\overline{d}_{\sigma\varphi\psi^j(1)})\cdots(\overline{d}_{\varphi\psi^j(k)},\overline{d}_{\sigma\varphi\psi^j(k)})
\]
is a permutiple string.
\end{proof}

We return to the class, $C$, of permutiples considered in Example \ref{example_0}.

\begin{example}\label{example_6}
Consider the $(4,10)$-permutiple
\[
p=(7,2,7,1,1,9,2,8,8)_{10}=4\cdot(1,8,1,7,7,9,8,2,2)_{10}
\]
constructed in Example \ref{example_0}, whose graph is $G_C$ from Example \ref{example_00}. Its permutiple string is $s=(8,2)(8,2)(2,8)(9,9)(1,7)(1,7)(7,1)(2,8)(7,1)$, and, using the $(4,10)$-Hoey-Sloane graph in Figure \ref{4_10_hsg}, the corresponding state-transition sequence is
\[
S=\{(0,0),(0,0),(0,3),(3,3),(3,3),(3,3),(3,0),(0,3),(3,0)\}.
\]
Using $S_j$ to denote the $j^{th}$ element of $S$, with indexing beginning at $j=0$, we see that $S_0=S_1$, $S_3=S_4=S_5$, $S_2=S_7$, and $S_6=S_8$. Thus, the composition  of any of the transpositions $(0,1)$, $(2,7)$, and $(6,8)$, as well as any permutation which fixes the set $\{3,4,5\}$, is a symmetry which fixes $S$.

Most of the above symmetries trivially result in the same permutiple string. On the other hand, we may consider only those symmetries which fix $S$, but permute the inputs of $s$ in a nontrivial way. Those symmetries are precisely the ones which permute elements of $S$ corresponding to distinct edge labels, such as $S_3=(3,3)$ and $S_4=(3,3)$, whose corresponding edge labels are $(9,9)$ and $(1,7)$, respectively. Thus, the transposition $\varphi_1=(3,4)$ is a symmetry yielding a new permutiple string, $s_{\varphi_1}=(8,2)(8,2)(2,8)(1,7)(9,9)(1,7)(7,1)(2,8)(7,1)$. We see that $\varphi_2=(3,5)$ is another symmetry of $p$ which gives us the permutiple string $s_{\varphi_2}=(8,2)(8,2)(2,8)(1,7)(1,7)(9,9)(7,1)(2,8)(7,1)$.  It is clear that $\varphi_1$ and $\varphi_2$ are the only nontrivial symmetries which fix the state-transition sequence (in the sense that they produce permutiple strings which are distinct from $s$ in the usual set-theoretical context).

We may now use Theorem \ref{symmetries_of_dihedral_siblings} to find the dihedral siblings of the three examples produced above,
\begin{align*}
p&=(7,2,7,1,1,9,2,8,8)_{10}=4\cdot(1,8,1,7,7,9,8,2,2)_{10},\\
p_{\varphi_1}&=(7,2,7,1,9,1,2,8,8)_{10}=4\cdot(1,8,1,7,9,7,8,2,2)_{10},\\
p_{\varphi_2}&=(7,2,7,9,1,1,2,8,8)_{10}=4\cdot(1,8,1,9,7,7,8,2,2)_{10},
\end{align*}
by composing $\varphi_1$ and $\varphi_2$ with the symmetries $\psi$, $\psi^2$, and $\psi^7$ to produce their rotational siblings. The rotational siblings of $p$ are the following:
\begin{align*}
p_{\psi}&=(8,7,2,7,1,1,9,2,8)_{10}=4\cdot(2,1,8,1,7,7,9,8,2)_{10},\\
p_{\psi^2}&=(8,8,7,2,7,1,1,9,2)_{10}=4\cdot(2,2,1,8,1,7,7,9,8)_{10},\\
p_{\psi^7}&=(7,1,1,9,2,8,8,7,2)_{10}=4\cdot(1,7,7,9,8,2,2,1,8)_{10}.
\end{align*}
Applying the rotations to $p_{\varphi_1}$ and $p_{\varphi_2}$, we have the following:
\begin{align*}
p_{\varphi_1\psi}&=(8,7,2,7,1,9,1,2,8)_{10}=4\cdot(2,1,8,1,7,9,7,8,2)_{10},\\
p_{\varphi_1\psi^2}&=(8,8,7,2,7,1,9,1,2)_{10}=4\cdot(2,2,1,8,1,7,9,7,8)_{10},\\
p_{\varphi_1\psi^7}&=(7,1,9,1,2,8,8,7,2)_{10}=4\cdot(1,7,9,7,8,2,2,1,8)_{10},\\
p_{\varphi_2\psi}&=(8,7,2,7,9,1,1,2,8)_{10}=4\cdot(2,1,8,1,9,7,7,8,2)_{10},\\
p_{\varphi_2\psi^2}&=(8,8,7,2,7,9,1,1,2)_{10}=4\cdot(2,2,1,8,1,9,7,7,8)_{10},\\
p_{\varphi_2\psi^7}&=(7,9,1,1,2,8,8,7,2)_{10}=4\cdot(1,9,7,7,8,2,2,1,8)_{10}.
\end{align*}

Here, we draw attention to the fact that all of the examples above have the state-transition sequence $S$, $S_{\varphi_1}$, or $S_{\varphi_2}$. That said, it is not difficult to find other symmetries of $p$ which correspond to permutiple strings distinct from those listed above. We will have more to say about this later. For now, we consider the reflective siblings of $p$. Since $c_j=n-1=3$ for $j=3,4,5$, and $6$, we know, by Corollary \ref{reflective_sibling_thm}, that $p$ has four reflective siblings,
\begin{align*}
\overline{p}_{\psi^3}&=(7,1,1,2,7,2,8,8,0)_{10}=4\cdot(1,7,7,8,1,8,2,2,0)_{10},\\
\overline{p}_{\psi^4}&=(0,7,1,1,2,7,2,8,8)_{10}=4\cdot(0,1,7,7,8,1,8,2,2)_{10},\\
\overline{p}_{\psi^5}&=(8,0,7,1,1,2,7,2,8)_{10}=4\cdot(2,0,1,7,7,8,1,8,2)_{10},\\
\overline{p}_{\psi^6}&=(8,8,0,7,1,1,2,7,2)_{10}=4\cdot(2,2,0,1,7,7,8,1,8)_{10}.
\end{align*}
Applying  $\varphi_1$ and $\varphi_2$ to the reflection of $p$, that is, $\overline{p}=(2,7,2,8,8,0,7,1,1)_{10}$, gives $\overline{p}_{\varphi_1}=(2,7,2,8,0,8,7,1,1)_{10}$ and $\overline{p}_{\varphi_2}=(2,7,2,0,8,8,7,1,1)_{10}$. The reader should note that these are not permutiples. However, applying rotations to $\overline{p}_{\varphi_1}$ and $\overline{p}_{\varphi_2}$, we do obtain new examples:
\begin{align*}
\overline{p}_{\varphi_1\psi^3}&=(7,1,1,2,7,2,8,0,8)_{10}=4\cdot(1,7,7,8,1,8,2,0,2)_{10},\\
\overline{p}_{\varphi_1\psi^4}&=(8,7,1,1,2,7,2,8,0)_{10}=4\cdot(2,1,7,7,8,1,8,2,0)_{10},\\
\overline{p}_{\varphi_1\psi^5}&=(0,8,7,1,1,2,7,2,8)_{10}=4\cdot(0,2,1,7,7,8,1,8,2)_{10},\\
\overline{p}_{\varphi_1\psi^6}&=(8,0,8,7,1,1,2,7,2)_{10}=4\cdot(2,0,2,1,7,7,8,1,8)_{10},\\
\overline{p}_{\varphi_2\psi^3}&=(7,1,1,2,7,2,0,8,8)_{10}=4\cdot(1,7,7,8,1,8,0,2,2)_{10},\\
\overline{p}_{\varphi_2\psi^4}&=(8,7,1,1,2,7,2,0,8)_{10}=4\cdot(2,1,7,7,8,1,8,0,2)_{10},\\
\overline{p}_{\varphi_2\psi^5}&=(8,8,7,1,1,2,7,2,0)_{10}=4\cdot(2,2,1,7,7,8,1,8,0)_{10},\\
\overline{p}_{\varphi_2\psi^6}&=(0,8,8,7,1,1,2,7,2)_{10}=4\cdot(0,2,2,1,7,7,8,1,8)_{10}.
\end{align*}

As mentioned above, it is not difficult to find symmetries of $p$ which do not correspond to a cyclic permutation of $S$. For example, taking
$\varphi=\left(
\begin{array}{cccccccccc}
0 & 1 & 2 & 3 & 4 & 5 & 6 & 7 & 8\\
0 & 2 & 3 & 4 & 5 & 6 & 1 & 7 & 8\\
\end{array}
\right)$, we obtain the permutiple string $s_{\varphi}=(8,2)(2,8)(9,9)(1,7)(1,7)(7,1)(8,2)(2,8)(7,1)$ corresponding to the state-transition sequence $
S_{\varphi}=\{(0,0),(0,3),(3,3),(3,3),(3,3),(3,0),(0,3),(0,0),(3,0)\}$, which is not a cyclic permutation of the elements of $S$. The reader may verify the above by using the multi-image union, $\Delta_I$, shown in Figure \ref{4_10_mi_union_1}. The resulting permutiple, $p_{\varphi}=(7,2,8,7,1,1,9,2,8)_{10}=4\cdot (1,8,2,1,7,7,9,8,2)_{10}$, is not on the above list. The techniques which we applied to $p$ may now be applied to $p_{\varphi}$. Continuing in this fashion, we may find all $72$ examples belonging to the permutiple class containing $p$ having the same multiset of digits. We leave this task to the ambitious reader.
\end{example}

To summarize, with Examples \ref{example_0} and \ref{example_000} in mind, a multiset union of mother-graph cycles whose corresponding multi-image union, $\Delta_I$, contains the zero state, is strongly connected, and the indegree and outdegree are equal at each vertex, makes manufacturing new permutiple examples of a specified length a straightforward task. If we already have a known example, $p$, in hand, we may easily construct $\Delta_I$ from the permutiple string and carries of $p$. From there, we may find the dihedral siblings of $p$ by using Corollary \ref{reflective_sibling_thm} and Theorem \ref{rotational_sibling_thm}. To $p$ and each of its dihedral siblings, we may then apply Theorem \ref{symmetries_of_dihedral_siblings} to find symmetries which fix the state-transition sequences of the dihedral siblings, but permute edge labels in a nontrivial way, as seen in Example \ref{example_6}. To find even more examples with the same multiset of digits as $p$, we may either reexamine $\Delta_I$,  or find non-cyclic permutations of $S$ which begin and end with the zero state. If $p$ is contained in a permutiple class, $C$, the above considerations, coupled with Corollary \ref{class_eqiv} in the next section, give us everything we need to find the collection of all permutiples in $C$ which have the same multiset of digits as $p$. Additionally, we may find all permutiples in the reflected class, $\overline{C}$, having the same multiset of digits as the reflective siblings of $p$.

The conjugacy class given in Table \ref{conj_class_table} also puts the considerations of this section on direct display; all of the symmetries, $\pi$, are of the form $\varphi\psi^j$, where $\varphi$ fixes the state-transition sequence.

\section{Characterizing permutiple symmetries}

We now relate permutiple symmetries to ideas encountered in previous work \cite{holt_3,holt_4,holt_5}. Theorem \ref{conj_class} tells us that conjugate permutiples must have the same graph, so that by Definition \ref{perm_class}, they must also belong to the same permutiple class. To demonstrate the converse of these statements would require that we make certain assumptions about digit permutations in the presence of repeated digits. To circumvent such tedium, we define a less strict notion of permutiple conjugacy which we call {\it coarse conjugacy}.

\begin{definition} \label{coarse_conj_class_def}
Suppose $(d_k, d_{k-1},\ldots, d_0)_b$ is an $(n,b,\sigma)$-permutiple. Also, suppose $p_1=(d_{\pi_1(k)}, d_{\pi_1(k-1)},\ldots, d_{\pi_1(0)})_b$, is an $(n,b, \tau_1)$-permutiple, and  $p_2=(d_{\pi_2(k)}, d_{\pi_2(k-1)},\ldots, d_{\pi_2(0)})_b$ is an $(n,b, \tau_2)$-permutiple. Then, we say that $p_1$ and $p_2$ are \textit{coarsely conjugate} if $d_{\pi_1 \tau_1 \pi_1^{-1}(j)}=d_{\pi_2 \tau_2 \pi_2^{-1}(j)}$ for all $0 \leq j \leq k$.
\end{definition}

To distinguish coarse conjugacy from other notions, we  refer to permutiple conjugacy, as defined in Definition \ref{conj_class}, as {\it fine} conjugacy. We see that fine and coarse conjugacy are equivalent definitions in the absence of repeated digits. In the case of repeated digits, however, fine conjugacy requires that we treat each repeated digit as distinct from other digits having the same value. That is, under fine conjugacy, two permutiple examples which are numerically equal can be considered distinct based upon how we choose to permute repeated digits \cite[Example 2]{holt_4}. On the other hand, coarse conjugacy does not make this distinction; any two permutiples with distinct permutations, but give the same arrangement of repeated digits, are considered members of the same coarse conjugacy class. That is, within a multiset framework, permutiples are, quite literally, more coarsely partitioned into classes. More precisely, if $\sigma_1$ and $\sigma_2$ are permutations on the set $\{0,1,\ldots,k\}$, and $p=(d_k, d_{k-1},\ldots, d_0)_b= n \cdot (d_{\sigma_1(k)}, d_{\sigma_1(k-1)},\ldots, d_{\sigma_1(0)})_b=n \cdot (d_{\sigma_2(k)}, d_{\sigma_2(k-1)},\ldots, d_{\sigma_2(0)})_b$, then, if $p_1=(d_{\pi(k)}, d_{\pi(k-1)},\ldots, d_{\pi(0)})_b$, is an $(n,b, \tau)$-permutiple which is coarsely conjugate to $p$, we have that $d_{\sigma_1(j)}=d_{\sigma_2(j)}=d_{\pi\tau\pi^{-1}(j)}$ for all $0 \leq j \leq k$. If repeated digits are present, then it is not necessarily the case that $\sigma_1=\sigma_2=\pi\tau\pi^{-1}$, meaning that fine conjugacy depends on the choice of permutation. Coarse conjugacy does not depend on this choice. In this way, we see that coarse conjugacy class membership is more permissive than fine class membership, which further explains the choice of terminology.

The next two results set the stage for a characterization which involves permutiple symmetries. The first of these is a characterization of coarse conjugacy in terms of permutiple graphs.

\begin{theorem}
Let $(d_k, d_{k-1},\ldots, d_0)_b$ be an $(n,b,\sigma)$-permutiple. Also, suppose that $p_1=(d_{\pi_1(k)}, d_{\pi_1(k-1)},\ldots, d_{\pi_1(0)})_b$ is an $(n,b, \tau_1)$-permutiple, and
$p_2=(d_{\pi_2(k)}, d_{\pi_2(k-1)},\ldots, d_{\pi_2(0)})_b$ is an $(n,b, \tau_2)$-permutiple. Then, $G_{p_1}=G_{p_2}$ if and only if $p_1$ and $p_2$ are coarsely conjugate.
\end{theorem}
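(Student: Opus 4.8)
The plan is to recast both sides in terms of the conjugated permutations $\alpha=\pi_1\tau_1\pi_1^{-1}$ and $\beta=\pi_2\tau_2\pi_2^{-1}$. By Definition \ref{class_graph} the edges of $p_1$ are $(d_{\pi_1(j)},d_{\pi_1\tau_1(j)})$ for $0\le j\le k$; substituting $i=\pi_1(j)$ rewrites this as $E_{p_1}=\{(d_i,d_{\alpha(i)}) : 0\le i\le k\}$, and symmetrically $E_{p_2}=\{(d_i,d_{\beta(i)}) : 0\le i\le k\}$. Under this bookkeeping, coarse conjugacy (Definition \ref{coarse_conj_class_def}) says exactly that $d_{\alpha(i)}=d_{\beta(i)}$ for every $i$, i.e.\ that the two \emph{indexed} families of edges agree entry by entry. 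The forward implication is then immediate: if $d_{\alpha(i)}=d_{\beta(i)}$ for all $i$, then the edges $(d_i,d_{\alpha(i)})$ and $(d_i,d_{\beta(i)})$ coincide for each $i$, so $E_{p_1}=E_{p_2}$ and hence $G_{p_1}=G_{p_2}$.

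The work is in the converse, where I must promote equality of the edge \emph{sets} to the entry-by-entry equality $d_{\alpha(i)}=d_{\beta(i)}$. If the digits $d_0,\dots,d_k$ are pairwise distinct this is automatic, since $d_i$ then has a unique out-edge in each graph: the edge $(d_i,d_{\alpha(i)})\in E_{p_1}=E_{p_2}$ has source $d_i$, and the only edge of $E_{p_2}$ with that source is $(d_i,d_{\beta(i)})$, forcing $d_{\alpha(i)}=d_{\beta(i)}$. The obstacle is repeated digits: when several indices share a value $x$, equality of the edge sets only guarantees that the $\alpha$-targets and the $\beta$-targets of the $x$-positions form the same \emph{collection} of out-neighbors of $x$, not that they match index by index. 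In fact, for arbitrary bijections $\alpha,\beta$ this stronger conclusion genuinely fails, so the permutiple hypothesis must be used here.

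To supply that extra rigidity I would turn to the carries. By Theorem \ref{digits_carries_1}, any edge $(d_1,d_2)$ of the mother graph (Definition \ref{mother_graph}) obeys the exact identity $bc_2-c_1=nd_2-d_1$ at its carries, and the map $(c_1,c_2)\mapsto bc_2-c_1$ is injective on $\{0,1,\dots,n-1\}^2$ (an equality $b(c_2-c_2')=c_1-c_1'$ with $|c_1-c_1'|<b$ forces $c_1=c_1'$ and then $c_2=c_2'$). Hence each edge is pinned to a single state transition intrinsic to its label, and distinct out-edges of a common source $x$ carry distinct transitions. Reading $p_1$ and $p_2$ as closed walks on the Hoey-Sloane graph, and using that the transition of an edge is a property of the edge rather than of the walk, I would match the occurrences of each source value $x$ state by state across the two walks, thereby recovering $d_{\alpha(i)}=d_{\beta(i)}$ at each individual index rather than merely as multisets. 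I expect this reconciliation of the repeated-digit positions across the two distinct $L$-walks to be the main obstacle; it is exactly the point at which the permutiple structure, and not pure graph theory, does the work.
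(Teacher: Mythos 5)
Your backward direction and your re-indexing $E_{p_1}=\{(d_i,d_{\alpha(i)})\}$, $E_{p_2}=\{(d_i,d_{\beta(i)})\}$ coincide with the paper's, and you have correctly located the one delicate point: passing from equality of the edge collections to the entrywise statement $d_{\alpha(i)}=d_{\beta(i)}$ when digits repeat. But your proposed repair does not close that gap, and it is left as a plan rather than an argument. The carries of $p_1$ and of $p_2$ are two unrelated sequences attached to two different orderings of the digits. The injectivity of $(c_1,c_2)\mapsto bc_2-c_1$ (equivalently, the cited fact that an edge label occupies a unique transition of $\Gamma$) only tells you that, \emph{within each walk separately}, the two occurrences of a repeated source value $x$ with distinct targets sit on distinct transitions. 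It provides no canonical correspondence between ``the occurrence of $x$ labeled by index $j$'' in the walk of $p_1$ and the occurrence labeled by $j$ in the walk of $p_2$, which is exactly what $d_{\alpha(j)}=d_{\beta(j)}$ asks for. Indeed, if the two copies of $x$ at indices $j$ and $j'$ are treated as interchangeable, one may relabel which copy is which in $p_2$ alone, leaving the number, the graph, the permutiple string, and the carry sequence all unchanged while swapping $d_{\beta(j)}$ and $d_{\beta(j')}$; no amount of carry analysis can exclude this, so the ``state-by-state matching'' step cannot be carried out.

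The paper's own proof uses no carries at all: it substitutes $j\mapsto\pi_1^{-1}(j)$ and $j\mapsto\pi_2^{-1}(j)$ into the two indexed families and then matches pairs by their first coordinate $d_j$. In the presence of repeated digits this matching is legitimate only under the standing convention recorded after Definition \ref{conj_class_def}, namely that $\{d_k,\ldots,d_0\}$ is a multiset whose repeated elements are kept distinguishable, so that the first coordinate $d_j$ of an edge determines the index $j$ and not merely its value. The resolution is thus a bookkeeping convention about what the edge $(d_j,d_{\alpha(j)})$ means, not an appeal to the arithmetic of digit-preserving multiplication; your instinct that ``the permutiple structure, and not pure graph theory, does the work'' is precisely where the plan goes astray. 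To complete your proof, invoke that convention explicitly at the matching step (after which your distinct-digit argument applies verbatim to the labeled copies); do not route the argument through the Hoey--Sloane machine.
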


\begin{proof}
If $G_{p_1}=G_{p_2}$, then $E_{p_1}=E_{p_2}$. Thus,
\[
\{(d_{\pi_1(j)},d_{\pi_1 \tau_1(j)})\mid 0 \leq j \leq k\}=\{(d_{\pi_2(j)},d_{\pi_2 \tau_2(j)}) \mid 0 \leq j \leq k\},
\]
from which we have
\[
\{(d_{j},d_{\pi_1 \tau_1\pi_1^{-1}(j)}) \mid 0 \leq j \leq k\}=\{(d_{j},d_{\pi_2 \tau_2\pi_2^{-1}(j)}) \mid 0 \leq j \leq k\}.
\]
It follows that $d_{\pi_1 \tau_1\pi_1^{-1}(j)}=d_{\pi_2 \tau_2\pi_2^{-1}(j)}$ for all $0 \leq j \leq k$.
The converse is argued similarly.
\end{proof}

The symmetries of a permutiple, $p$, characterize the graph of $p$.

\begin{theorem}
Suppose $p=(d_k, d_{k-1},\ldots, d_0)_b$ is an $(n,b,\sigma)$-permutiple, and suppose $p_{\pi}=(d_{\pi(k)}, d_{\pi(k-1)},\ldots, d_{\pi(0)})_b$ is an $(n,b,\tau)$-permutiple. Then, $\pi$ is a symmetry of $p$ if and only if $G_{p}=G_{p_{\pi}}$.
\end{theorem}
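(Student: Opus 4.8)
The plan is to unpack the definition of ``symmetry'' into a purely numerical identity among the digits, and then to recognize that identity as precisely the coarse-conjugacy criterion supplied by the immediately preceding theorem relating graph equality to coarse conjugacy.

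First I would record what it means for $\pi$ to be a symmetry. By definition, $\pi$ is a symmetry of $p$ exactly when the rearranged string $s_\pi=(d_{\pi(0)},d_{\sigma\pi(0)})(d_{\pi(1)},d_{\sigma\pi(1)})\cdots(d_{\pi(k)},d_{\sigma\pi(k)})$ is again a permutiple string. Reading the left components as product digits and the right components as multiplicand digits, $s_\pi$ is nothing but the permutiple string attached to the arrangement $(d_{\pi(k)},\ldots,d_{\pi(0)})_b=n\cdot(d_{\sigma\pi(k)},\ldots,d_{\sigma\pi(0)})_b$. Hence $\pi$ is a symmetry if and only if this numerical identity holds, i.e.\ if and only if $p_\pi$ is an $(n,b,\pi^{-1}\sigma\pi)$-permutiple, as already noted in the discussion preceding the theorem.

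For the forward direction I would argue directly. Suppose $\pi$ is a symmetry. Comparing the two representations of $p_\pi/n$ — the one from the hypothesis, $(d_{\pi\tau(k)},\ldots,d_{\pi\tau(0)})_b$, and the one from the symmetry, $(d_{\sigma\pi(k)},\ldots,d_{\sigma\pi(0)})_b$ — and using uniqueness of base-$b$ digits, I obtain $d_{\pi\tau(j)}=d_{\sigma\pi(j)}$ for every $j$. Substituting into $E_{p_\pi}=\{(d_{\pi(j)},d_{\pi\tau(j)})\mid 0\le j\le k\}$ and reindexing by $i=\pi(j)$ turns this set into $\{(d_i,d_{\sigma(i)})\mid 0\le i\le k\}=E_p$, so $G_p=G_{p_\pi}$. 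The reverse direction is the real content, and it is where I would lean on the preceding theorem. Applying that theorem with $p$ in the role of $p_1$ (so $\pi_1=\varepsilon$, $\tau_1=\sigma$) and $p_\pi$ in the role of $p_2$ (so $\pi_2=\pi$, $\tau_2=\tau$), the hypothesis $G_p=G_{p_\pi}$ says $p$ and $p_\pi$ are coarsely conjugate, i.e.\ $d_{\sigma(j)}=d_{\pi\tau\pi^{-1}(j)}$ for all $j$. Reindexing gives $d_{\sigma\pi(j)}=d_{\pi\tau(j)}$ for all $j$, whence $(d_{\sigma\pi(k)},\ldots,d_{\sigma\pi(0)})_b=(d_{\pi\tau(k)},\ldots,d_{\pi\tau(0)})_b$; multiplying by $n$ and invoking $p_\pi=n\cdot(d_{\pi\tau(k)},\ldots,d_{\pi\tau(0)})_b$ yields the identity $(d_{\pi(k)},\ldots,d_{\pi(0)})_b=n\cdot(d_{\sigma\pi(k)},\ldots,d_{\sigma\pi(0)})_b$ that certifies $s_\pi$ as a permutiple string, so $\pi$ is a symmetry.

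The main obstacle is conceptual rather than computational: graph equality only records the \emph{set} of edges, which on its own is too weak to force the pointwise digit identity $d_{\sigma(j)}=d_{\pi\tau\pi^{-1}(j)}$ one needs, since repeated digits can permute second coordinates among edges sharing a common first coordinate. Circumventing this is exactly why the reverse direction must route through the coarse-conjugacy characterization of the preceding theorem rather than attempting to read the pointwise identity off the edge set by hand; the remaining steps — unpacking the symmetry definition and the base-$b$ uniqueness arguments — are routine.
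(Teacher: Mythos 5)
Your proposal is correct and follows essentially the same route as the paper: the forward direction compares the two base-$b$ representations of $p_\pi/n$ to get the pointwise digit identity and then reindexes the edge set, exactly as the paper does. For the converse, the paper argues directly that equal graphs plus equal digit multisets force $s$ and $s_\pi$ to have the same multiset of inputs, while you reach the same pointwise identity by explicitly invoking the preceding coarse-conjugacy theorem---a presentational difference (and arguably a cleaner handling of repeated digits), not a different argument.
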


\begin{proof}
Suppose $\pi$ is a symmetry of $p$, and let
\[
s=(d_{0},d_{\sigma(0)})(d_{1},d_{\sigma(1)})\cdots(d_{k-1},d_{\sigma(k-1)})(d_{k},d_{\sigma(k)})
\]
be the permutiple string corresponding to $p$. Also, let
\[
s_{\pi}=(d_{\pi(0)},d_{\tau\pi(0)})(d_{\pi(1)},d_{\tau\pi(1)})\cdots(d_{\pi(k-1)},d_{\tau\pi(k-1)})(d_{\pi(k)},d_{\tau\pi(k)})
\]
be the permutiple string of $p_{\pi}$.
Since $\pi$ is a symmetry of $p$, we have, by definition, that
\[
\widehat{s}_{\pi}=(d_{\pi(0)},d_{\sigma\pi(0)})(d_{\pi(1)},d_{\sigma\pi(1)})\cdots(d_{\pi(k-1)},d_{\sigma\pi(k-1)})(d_{\pi(k)},d_{\sigma\pi(k)})
\]
is a permutiple string. This is to say that
\begin{align*}
 p_{\pi}&=n\cdot (d_{\tau\pi(k)}, d_{\tau\pi(k-1)},\ldots, d_{\tau\pi(0)})_b\\
 &=(d_{\pi(k)}, d_{\pi(k-1)},\ldots, d_{\pi(0)})_b\\
 &=n\cdot (d_{\sigma\pi(k)}, d_{\sigma\pi(k-1)},\ldots, d_{\sigma\pi(0)})_b\\
& =\widehat{p}_{\pi}.
\end{align*}
Hence, the permutiple strings $s$, $\widehat{s}_{\pi}$, and $s_{\pi}$ contain the same collection of inputs. From this fact, it follows that $G_{p}=G_{p_{\pi}}$.

Now suppose that the graphs $G_{p}$ and $G_{p_{\pi}}$ are the same. Since $p$ and $p_{\pi}$ have the same collection of digits, the multiset of inputs in the strings $s$ and $s_{\pi}$ must be identical. Since these are both permutiple strings, $\pi$ must be a symmetry of $p$.
\end{proof}

The above definitions and results give us a characterization of what it means for two permutiples having the same multiset of digits to be members of the same permutiple class.

\begin{corollary}\label{class_eqiv}
Suppose $p=(d_k, d_{k-1},\ldots, d_0)_b$ is an $(n,b,\sigma)$-permutiple, and suppose $p_{\pi}=(d_{\pi(k)}, d_{\pi(k-1)},\ldots, d_{\pi(0)})_b$ is an $(n,b,\tau)$-permutiple. Then, the following statements are equivalent:
\begin{enumerate}
\item $\pi$ is a symmetry of $p$.
\item $G_{p}=G_{p_{\pi}}$.
\item $p$ and $p_{\pi}$ are coarsely conjugate.
\item $p$ and $p_{\pi}$ belong to the same permutiple class.
\end{enumerate}
\end{corollary}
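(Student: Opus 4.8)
The plan is to read this corollary as a bookkeeping assembly of the two theorems stated immediately above it together with Definition~\ref{perm_class}, rather than as a result requiring fresh work. The second of those theorems already gives the equivalence of items~1 and~2, namely that $\pi$ is a symmetry of $p$ if and only if $G_p = G_{p_\pi}$; and the first, applied with the roles $p_1 = p$ and $p_2 = p_\pi$, gives the equivalence of items~2 and~3, namely that $G_p = G_{p_\pi}$ if and only if $p$ and $p_\pi$ are coarsely conjugate. Consequently the only link still to be forged is between item~2 and item~4, after which the equivalences $1 \Leftrightarrow 2 \Leftrightarrow 3$ and $2 \Leftrightarrow 4$ close the chain.

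For the implication $2 \Rightarrow 4$, I would argue straight from Definition~\ref{perm_class}: the class of $p$ is $C_p = \{\, q : G_q \subseteq G_p \,\}$ and the class of $p_\pi$ is $C_{p_\pi} = \{\, q : G_q \subseteq G_{p_\pi} \,\}$, so if $G_p = G_{p_\pi}$ these two defining conditions coincide verbatim, forcing $C_p = C_{p_\pi}$; since $p$ and $p_\pi$ each trivially lie in their own class, they then lie in this single common class. For $4 \Rightarrow 2$, I would use mutual containment: membership of $p_\pi$ in $C_p$ gives $G_{p_\pi} \subseteq G_p$, membership of $p$ in $C_{p_\pi}$ gives $G_p \subseteq G_{p_\pi}$, and the two containments together yield $G_p = G_{p_\pi}$.

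The one point genuinely demanding care---and the step I expect to be the real obstacle---is pinning down what ``belong to the same permutiple class'' is to mean, since Definition~\ref{perm_class} attaches a class to each permutiple individually. I would fix the reading to be that the two associated classes coincide, $C_p = C_{p_\pi}$; under this reading the mutual-containment argument above is complete and needs nothing beyond the definition. I would also flag why the weaker reading---mere co-membership in some common class---must be rejected: the pair $p=(7,8,9,1,2)_{10}=4\cdot(1,9,7,2,8)_{10}$ and the examples of Table~\ref{conj_class_table} share the same multiset of digits yet have visibly different graphs (a single long cycle versus a disjoint union of short cycles), so any equivalence with item~2 can only survive if ``same class'' is strong enough to separate them, which is exactly the class-equality reading. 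Once this convention is stated explicitly at the outset, the remainder of the proof is routine, and I would present items~1 through~4 as equivalent by the star pattern centered at item~2 described above.
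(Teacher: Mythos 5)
Your proposal is correct and matches the paper's intent: the paper states this corollary without a separate proof, treating it as the immediate assembly of the two preceding theorems (which give $1\Leftrightarrow 2$ and $2\Leftrightarrow 3$) with Definition~\ref{perm_class} supplying $2\Leftrightarrow 4$ by mutual containment of the class graphs, exactly as you argue. Your explicit remark that ``belong to the same permutiple class'' must be read as equality of the classes $C_p$ and $C_{p_\pi}$ (rather than mere co-membership in some common class) is a sensible clarification that the paper leaves implicit.
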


\bigskip
\hrule
\bigskip
\noindent 2020 {\it Mathematics Subject Classification}: Primary 11A63.

\noindent \emph{Keywords:} permutiple,  palintiple, reverse multiple, reverse divisor, digit-preserving multiplication, permutiple class, dihedral sibling, permutiple conjugacy, mother graph, Hoey-Sloane graph, Hoey-Sloane multigraph, Hoey-Sloane machine, Young graph.

\bigskip
\hrule
\bigskip

\noindent (Concerned with sequences
\seqnum{A001232},
\seqnum{A008918},
\seqnum{A008919},
\seqnum{A023059},
\seqnum{A023060},
\seqnum{A023064},
\seqnum{A023086},
\seqnum{A023087},
\seqnum{A023088},
\seqnum{A023089},
\seqnum{A023090},
\seqnum{A023091},
\seqnum{A023092},
\seqnum{A023093},
\seqnum{A031877},
\seqnum{A053654},
\seqnum{A092697},
\seqnum{A096093},
\seqnum{A222814},
\seqnum{A222815}, and
\seqnum{A360518}.)

\bigskip
\hrule
\bigskip

\vspace*{+.1in}
\noindent

\vskip .1in


\begin{thebibliography}{99}

\bibitem{bang}
J. Bang-Jensen and G. Z. Gutin, {\it Digraphs: Theory, Algorithms and Applications}, Springer, London, 2009.

\bibitem{guttman}
S. Guttman, On cyclic numbers, \textit{Amer. Math. Monthly} \textbf{41}(3) (1934), 159--166.

\bibitem{faber}
X. Faber and J. Grantham,  On integers whose sum is the reverse of their product,  \textit{Fibonacci Quart.} \textbf{61}(1) (2023), 28--41.

\bibitem{farrell} M. Farrell and L. Levine, Multi-Eulerian tours of directed graphs, {\it Electron. J. Combin.} {\bf 23}(2) (2016), P2.21.

\bibitem{hoey}
D. J. Hoey, Untitled online article, \url{https://oeis.org/A008919/a008919.txt}, accessed November 7, 2025.

\bibitem{holt_1}
B. V. Holt, Some general results and open questions on palintiple numbers, \textit{Integers} \textbf{14} (2014), \#A42.

\bibitem{holt_2}
B. V. Holt, Derived palintiple families and their palinomials, \textit{Integers} \textbf{16} (2016), \#A27.

\bibitem{holt_3}
B. V. Holt, On permutiples having a fixed set of digits, \textit{Integers} \textbf{17} (2017), \#A20.

\bibitem{holt_4}
B. V. Holt, A method for finding all permutiples with a fixed set of digits from a single known example, \textit{Integers} \textbf{24} (2024), \#A88.

\bibitem{holt_5}
B. V. Holt, Finding permutiples of a known base and multiplier, \textit{Integers}  \textbf{25} (2025), \#A13.

\bibitem{holt_6}
B. V. Holt, A multigraph characterization of permutiple strings, \textit{Integers}  \textbf{25} (2025), \#A111.

\bibitem{kalman}
D. Kalman, Fractions with cycling digit patterns, \textit{College Math. J.} \textbf{27}(2) (1996), 109--115.

\bibitem{kendrick_1}
L. H. Kendrick, Young graphs: 1089 et al., \textit{J. Integer Seq.} \textbf{18} (2015), \href{https://cs.uwaterloo.ca/journals/JIS/VOL18/Kendrick/ken1.pdf}{Article 15.9.7.}

\bibitem{qu}
B. Qu and S. J. Curran, An unexpected digit permutation from multiplying in any number base, in {\it Combinatorics, Graph Theory and Computing: SEICCGTC 2020, Springer Proceedings in Mathematics and Statistics},  Vol. 388, Springer, 2022, pp.\ 13--31.

\bibitem{sloane}
N. J. A. Sloane, 2178 and all that, \textit{Fibonacci Quart.} \textbf{52}(2) (2014), 99--120.

\bibitem{sloane_2} N. J. A. Sloane et al., {\it The On-Line Encyclopedia of Integer Sequences}. Published electronically at \url{https://oeis.org}.

\bibitem{web_wil}
R. Webster and G. Williams, On the trail of reverse divisors: $1089$ and all that follow, \textit{Math.
Spectrum} \textbf{45} (2012/2013), 96--102.

\bibitem{young_1}
A. L. Young, $k$-reverse multiples, \textit{Fibonacci Quart.} \textbf{30} (1992), 126--132.

\bibitem{young_2}
A. L. Young, Trees for $k$-reverse multiples, \textit{Fibonacci Quart.} \textbf{30} (1992), 166--174.
\end{thebibliography}
\end{document}